\setlist{leftmargin=5.5mm}
\newtheorem{theorem}{Theorem}[section]
\newtheorem{corollary}{Corollary}[theorem]
\newtheorem{definition}{Definition}[section]
\newtheorem{example}[theorem]{Example}
\newtheorem{lemma}[theorem]{Lemma}
\newtheorem{proposition}[theorem]{Proposition}
\newtheorem{remark}[theorem]{Remark}
\newtheorem{assump}{Assumption}
\numberwithin{equation}{section} 
\DeclareMathOperator{\diff}{d\!}
\newcommand{\norm}[1]{\left|#1\right|}   
\begin{document}

\title[The Burgers' equation with stochastic transport ]{The Burgers' equation with stochastic transport: shock formation, local and global existence of smooth solutions}

\author[D. Alonso-Or\'an, A. Bethencourt de Le\'on and S.Takao]{Diego Alonso-Or\'an, Aythami Bethencourt de Le\'on and So Takao}

\address{DA: Instituto de Ciencias Matem\'aticas CSIC-UAM-UC3M-UCM, 28049 Madrid, Spain.}
\email{DA: diego.alonso@icmat.es}

\address{AB, ST: Department of Mathematics, Imperial College, London SW7 2AZ, UK.}
\email{AB: ab1113@ic.ac.uk \ \  ST:st4312@ic.ac.uk}

\maketitle

\begin{abstract}
In this work, we examine the solution properties of the Burgers' equation with stochastic transport. First, we prove results on the formation of shocks in the stochastic equation and then obtain a stochastic Rankine-Hugoniot condition that the shocks satisfy. Next, we establish the local existence and uniqueness of smooth solutions in the inviscid case and construct a blow-up criterion. Finally, in the viscous case, we prove global existence and uniqueness of smooth solutions.
\end{abstract}

% \renewcommand{\abstractname}{R\'esum\'e}

% \begin{abstract}
% Dans ce travail, nous examinons les propri\'et\'es des solutions de l'\'equation de Burgers avec un transport stochastique. Pour commencer, nous prouvons quelques r\'esultats sur la formation de chocs dans l'\'equation stochastique avant d'obtenir une condition stochastique Rankine-Hugoniot satisfaite par les chocs. Ensuite, nous \'etablissons l'existence locale ainsi que l'unicit\'e des solutions fortes dans le cas inviscide et contruisons un crit\`ere d'explosion. Dans le cas visqueux, nous prouvons l'existence et l'unicit\'e de solutions fortes.
% \end{abstract}

\setcounter{tocdepth}{1}
%\tableofcontents

\section{Introduction}  \label{section1}

We prove the well-posedness of a stochastic Burgers' equation of the form
\begin{align}
    \diff u(t,x) + u(t,x)\partial_x u (t,x) \,\diff t + \sum_{k=1}^\infty \xi_k(x) \partial_x u(t,x) \circ \diff W_t^k = \nu \partial_{xx} u(t,x) \diff t, \label{eq0}
\end{align}
where $x \in \mathbb T$ or $\mathbb R,$ $\nu \geq 0$ is constant, $\{W_t^k\}_{k \in \mathbb N}$ is a countable set of independent Brownian motions, $\{\xi_k(\cdot)\}_{k \in \mathbb N}$ is a countable set of prescribed functions depending only on the spatial variable, and $\circ$ means that the stochastic integral is interpreted in the Stratonovich sense. If the set $\{\xi_k(\cdot)\}_{k \in \mathbb N}$ forms a basis of some separable Hilbert space $\mathcal H$ (for example $L^2(\mathbb T)$), then the process $\diff W := \sum_{k=1}^\infty \xi_k(x) \circ \diff W_t^k$ is a {\em cylindrical Wiener process} on $\mathcal H$, generalising the notion of a standard Wiener process to infinite dimensions.

The multiplicative noise in \eqref{eq0} makes the transport velocity stochastic, which allows the Burgers' equation to retain the form of a transport equation $\partial_t u + \tilde{u} \,\partial_x u = 0,$ where $\tilde{u}(t,x) := u(t,x) + \dot{W}$ is a {\em stochastic vector field} with noise $\dot{W}$ that is smooth in space and rough in time. Compared with the well-studied Burgers' equation with additive noise, where the noise appears as an external random forcing, this type of noise arises by taking the diffusive limit of the Lagrangian flow map regarded as a composition of a slow mean flow and a rapidly fluctuating one \cite{homogenization2017}. In several recent works, this type of noise, which we call {\em stochastic transport}, has been used to stochastically parametrise unresolved scales in fluid models while retaining the essential physics of the system \cite{holm2015,cotter2018a,cotter2018b}. On the other hand, it has also been shown to have a regularising effect on certain PDEs that are ill-posed \cite{flandoli2010,flandoli2011,flandoli2013,gess2017}. Therefore, it is of interest to investigate how the stochastic transport in \eqref{eq0} affects the Burgers' equation, which in the inviscid case $\nu=0$ is a prototypical model for {\em shock formation}.  In particular, we ask whether this noise can prevent the system from developing shocks or, on the contrary, produce new shocks. We also ask whether this system is well-posed or not. In this paper, we will show that:\\
\begin{enumerate}
    \item For $\nu = 0$, equation \eqref{eq0} has a unique solution of class $H^s$ for $s > 2$ until some stopping time $\tau > 0$.
    \item However, shock formation cannot be avoided a.s. in the case $\xi(x) = \alpha x + \beta$ and for a broader class of $\{\xi_k(\cdot)\}_{k \in \mathbb N}$, we can prove that it occurs in expectation.
    \item For $\nu > 0$, we have global existence and uniqueness in $H^s$ for $s>2$.
\end{enumerate}
On top of this, we prove a continuation criterion for the inviscid equation ($\nu=0$), which generalises the result for the deterministic case. The above results are not immediately evident for reasons we will discuss below. Although we cannot prove this here, we believe that shocks in Burgers' equation are too robust and ubiquitous to be prevented by noise, regardless of what $\{\xi_k(\cdot)\}_{k \in \mathbb N}$ is chosen. Our results provide rigorous evidence to support this claim.
 
 The question of whether noise can regularise PDEs is not new. In finite dimensions, it is well-known that additive noise can restore the well-posedness of ODEs whose vector fields are merely bounded and measurable (see \cite{veretennikov1981}). For PDEs, a general result is not known; however, there has been a significant effort in recent years to generalise this celebrated result to PDEs. In a remarkable paper, Flandoli, Gubinelli, and Priola \cite{flandoli2010} demonstrated that the linear transport equation $\partial_t u + b(t,x) \cdot \nabla u = 0$, which is ill-posed if $b$ is sufficiently irregular, can recover existence and uniqueness of $L^\infty$ solutions that is strong in the 
 probabilistic sense, by the addition of a ``simple'' transport noise,
 \begin{align}
     \diff u + b(t,x) \cdot \nabla u \diff t + \nabla u \circ \diff W_t = 0,
 \end{align}
 where the drift $b$ is bounded, measurable, H\"older continuous, and satisfies an integrability condition on the divergence $\nabla \cdot b \in L^p([0,T] \times \mathbb R^d)$. In a subsequent paper \cite{flandoli2013}, the same noise was shown to retain some regularity of the initial condition, thus restoring well-posedness of strong solutions, and a selection principle based on taking the zero-noise limit as opposed to the inviscid limit was considered in \cite{attanasio2009}. 

However, for nonlinear transport equations such as Burgers', the same type of noise $\diff u + u\,\partial_xu \diff t + \partial_x u \circ \diff W_t = 0$ does not help, since a simple change of variables $v(t,x) := u(t,x-W_t)$ will lead us back to the original equation $\partial_t v + v \,\partial_x v = 0$. Hence, if noise were to prevent shock formation, a more general class could be required, such as the cylindrical transport noise $\sum_{k=1}^\infty \xi_k(x) \partial_x u \circ \diff W_t^k$ that we consider in this paper. In \cite{flandoli2011} and \cite{delarue2014}, it was shown that collapse in Lagrangian point particle solutions of certain nonlinear PDEs (point vortices in 2D Euler and point charges in the Vlasov-Poisson system), can be prevented by this cylindrical transport noise with $\xi_k(x)$ satisfying a certain hypoellipticity condition, thus providing hope for regularisation of nonlinear transport equation by noise.
More recently, Gess and Maurelli \cite{gess2017} showed that adding a simple stochastic transport term into a nonlinear transport equation
\begin{align}
    \diff u + b(x,u(t,x)) \nabla u \diff t + \nabla u \circ \diff W_t = 0,
\end{align}
which in the deterministic case admits non-unique entropy solutions for sufficiently irregular $b$, can restore uniqueness of entropy solutions, providing a first example of a nonlinear transport equation that becomes well-posed when adding a suitable noise.

We should now stress the difference between the present work and previous works. First, we acknowledge that in Flandoli \cite{BookOfDoom}, Chapter \textcolor{red}{5.1.4}, it is argued that shock formation does not occur even with the most general cylindrical transport noise, by writing the characteristic equation as an {\em It\^o} SDE
\begin{align}
    X_t = X_0 + u(0,X_0) t + \sum_{k=1}^\infty \int^t_0 \xi_k(X_s) \diff W_s^k,
\end{align}
which is a martingale perturbation of straight lines that will cross without noise. Thus, using the property that a martingale $M_t$ grows slower than $t$ almost surely as $t \rightarrow \infty$, it is shown that the characteristics cross almost surely. However, the characteristic equation for the system \eqref{eq0} is in fact a {\em Stratonovich} SDE,
\begin{align}
    X_t = X_0 + u(0,X_0) t + \sum_{k=1}^\infty \int^t_0 \xi_k(X_s) \circ \diff W_s^k, \label{char1}
\end{align}
and therefore Flandoli's argument can be applied to the martingale term, but not to the additional drift term, which may disrupt shock formation.
The techniques we use here apply to Stratonovich equations; however, due to the difficulty caused by the additional drift term, we were only able to prove that the characteristics cross almost surely in the very particular case $\xi(x) = \alpha x + \beta$, leaving the general case open for future investigation. By using a different strategy, where instead we look at how the slope $\partial_x u$ evolves along a characteristic \eqref{char1}, we manage to show that for a wider class of $\{\xi_k(\cdot)\}_{k=1}^\infty$ such that the infinite sum \mbox{$\sum_{k\in\mathbb N} ((\partial_x \xi_k)^2 - \xi_k \partial_{xx}\xi_k)$} is pointwise bounded, we have that
\begin{itemize}
    \item if $\partial_x u(0,X_0) > 0$, then $\partial_x u(t,X_t) < \infty$ almost surely for all $t>0$ and\\
    \item if $\partial_x u(0,X_0)$ is sufficiently negative, then there exists $0 < t_* < \infty$ such that\\
    $\lim_{t \rightarrow t_*} \mathbb E[\partial_x u (t,X_t)] = -\infty$.
\end{itemize}
In summary, shock formation occurs in expectation if the initial profile has a sufficiently negative slope and no new shocks can form from a positive slope.

We finally address the question of well-posedness. We will prove that by choosing a sufficiently regular initial condition, equation \eqref{eq0} admits a unique local solution that is smooth enough, such that the arguments employed in the previous section on shock formation are valid (in fact, we show this for a noise of the type $Qu \circ \diff W_t,$ where $Qu = a(x) \partial_x u + b(x) u,$ which generalises the one considered in \eqref{eq0}). For Burgers' equation with additive space-time white noise, however, there have been many previous works showing well-posedness \cite{StoBurger1994,2StoBurger1994,3StoBurger1994,4StoBurger1994}. The techniques used in these works are primarily based on reformulating the equations by a change of variable or by studying its linear part. The main difference in our work is that the multiplicative noise we consider depends on the solution and its gradient. Therefore, the effect of the noise hinges on its spatial gradient and the solution, giving rise to several complications. For instance, when deriving a priori estimates, certain high order terms appear, which need to be treated carefully. Recently, the same type of multiplicative noise has been treated for the Euler equation \cite{stochEuler2017,stoEuler2018} and the Boussinesq system \cite{stochBouss2018}, whose techniques we follow closely in our proof. We note that the well-posedness analysis of a more general stochastic conservation law, which includes the inviscid stochastic Burgers' equation as a special case, has also been considered, for instance in \cite{friz2016stochastic, gess2017stochastic,funaki2019uniqueness}. However, these works deal with the well-posedness analysis of weak kinetic and entropy solutions, in contrast to classical solutions, which we consider here.
There is also the recent work \cite{hocquet2018} showing the local well-posedness of weak solutions in the viscous Burgers' equation ($\nu > 0$) driven by rough paths in the transport velocity. An important contribution of this paper is showing the global well-posedness of strong solutions in the viscous case by proving that the maximum principle is retained under perturbation by stochastic transport of type $\xi(x) \partial_x u(t,x) \circ \diff W_t$.

\subsection{Main results}
Let us state here the main results of the article:
\begin{theorem}[Shock formation in the stochastic Burgers' equation] \label{theorem-1}
In the following, we use the notation $\psi(x) := \frac12\sum_{k=1}^\infty \left( (\partial_x \xi_k(x))^2 - \xi_k(x) \partial_{xx} \xi_k(x)\right)$. The main results regarding shock formation in \eqref{eq0} are as follows:
\begin{enumerate}
    \item Let $\xi_1(x) = \alpha x + \beta$, $x \in \mathbb R$ and $\xi_k \equiv 0$ for $k=2,3,\ldots$ and assume that $u(0,x)$ has a negative slope. Then, there exists two characteristics satisfying \eqref{char1} with different initial conditions that cross in finite time almost surely.
    \\
    \item Let $X_t$ be a characteristic solving \eqref{char1} with $\{\xi_k(\cdot)\}_{k \in \mathbb N}$ satisfying the conditions in Assumption \ref{xi-assump} below and let $\partial_x u(0,X_0) \geq 0$. Then, if $\psi(x) < \infty$ for all $x \in \mathbb T$ or $\mathbb R$, we have that $\partial_t u(t,X_t) < \infty$ almost surely for all $t>0$.
    \\
    \item Again, let $X_t$ be a characteristic solving \eqref{char1} with $\{\xi_k(\cdot)\}_{k \in \mathbb N}$ satisfying the conditions in Assumption \ref{xi-assump} and let $\partial_x u(0,X_0) < 0$. Also assume that $\partial_x u(0,X_0) < \psi(x)$ for all $x \in \mathbb T$ or $\mathbb R$. Then there exists $0<t_*<\infty$ such that $\lim_{t \rightarrow t_*} \mathbb E [\partial_x u(t,X_t)] = - \infty$.
\end{enumerate}
\end{theorem}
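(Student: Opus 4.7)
The plan is to treat the three statements separately, though parts~(2) and~(3) will share a common derivation. A key observation I use throughout is that, by the Stratonovich chain rule applied to \eqref{eq0} with $\nu = 0$, $u$ is conserved along characteristics, i.e.\ $u(t, X_t) = u(0, X_0)$, so \eqref{char1} is a genuine SDE for $X_t$ with drift $u(0, X_0)$.

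For part~(1), I would pick $x_1 < x_2$ with $u(0, x_1) > u(0, x_2)$ (available by the negative slope assumption) and study $Y_t := X_t^2 - X_t^1$. With $\xi_1(x) = \alpha x + \beta$ this satisfies the linear Stratonovich SDE $dY_t = c\,dt + \alpha Y_t \circ dW_t$, where $c := u(0, x_2) - u(0, x_1) < 0$ and $Y_0 > 0$. Introducing the integrating factor $e^{-\alpha W_t}$ and using the Stratonovich product rule yields the closed form
\begin{equation*}
    Y_t = e^{\alpha W_t}\Bigl(Y_0 + c \int_0^t e^{-\alpha W_s}\,ds\Bigr),
\end{equation*}
so $Y$ vanishes exactly when $\int_0^t e^{-\alpha W_s}\,ds = Y_0/|c|$. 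The map $t \mapsto \int_0^t e^{-\alpha W_s}\,ds$ is continuous, strictly increasing, and tends to $+\infty$ almost surely (by recurrence of Brownian motion, which makes the Lebesgue measure of $\{s \geq 0 : -\alpha W_s \geq 1\}$ a.s.\ infinite), so such a crossing time $t^* \in (0, \infty)$ exists a.s.

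For parts~(2) and~(3), I would differentiate \eqref{eq0} in $x$ and apply the Stratonovich chain rule to $v_t := \partial_x u(t, X_t)$ along the characteristic; the terms $u\,\partial_{xx} u$ and $\xi_k \partial_{xx} u$ cancel, leaving the Bernoulli-type SDE
\begin{equation*}
    dv_t = -v_t^2\,dt - \sum_{k=1}^\infty \partial_x \xi_k(X_t)\,v_t \circ dW_t^k.
\end{equation*}
The substitution $w_t := 1/v_t$ (legitimate because Stratonovich obeys the ordinary chain rule) linearises this to $dw_t = dt + \sum_k \partial_x \xi_k(X_t)\, w_t \circ dW_t^k$, and variation of constants produces the explicit representation
\begin{equation*}
    w_t = Z_t\Bigl(v_0^{-1} + \int_0^t Z_s^{-1}\,ds\Bigr), \qquad Z_t := \exp\!\Bigl(\sum_k \int_0^t \partial_x \xi_k(X_s) \circ dW_s^k\Bigr) > 0.
\end{equation*}
Part~(2) is then immediate: when $v_0 \geq 0$ the bracket is strictly positive, so $w_t > 0$ and $v_t = 1/w_t$ remains finite almost surely for every $t > 0$ (the case $v_0 = 0$ corresponds to $v_t \equiv 0$ by uniqueness).

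For part~(3), I would convert the $v_t$-SDE to It\^o form. The Stratonovich--It\^o correction, computed from the covariation $d[\partial_x\xi_k(X_\cdot)\,v_\cdot,\,W^k]_t = v_t\bigl(\xi_k \partial_{xx}\xi_k - (\partial_x \xi_k)^2\bigr)(X_t)\,dt$ and summed over $k$, is exactly $v_t \psi(X_t)\,dt$, giving
\begin{equation*}
    dv_t = \bigl(-v_t^2 + v_t \psi(X_t)\bigr)dt - \sum_k \partial_x \xi_k(X_t)\,v_t\,dW_t^k.
\end{equation*}
After localising at $\tau_N := \inf\{t : |v_t| \geq N\}$, taking expectation, and sending $N \to \infty$, I obtain $\tfrac{d}{dt}\mathbb{E}[v_t] = -\mathbb{E}[v_t^2] + \mathbb{E}[v_t\psi(X_t)]$. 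The explicit formula above forces $v_t < 0$ on $\{v_t \text{ finite}\}$, and Assumption~\ref{xi-assump} ensures $m := \inf_x \psi(x) > -\infty$; combining these gives $\mathbb{E}[v_t \psi(X_t)] \leq m\,\mathbb{E}[v_t]$, and Jensen's inequality then produces the Riccati bound
\begin{equation*}
    \tfrac{d}{dt}\mathbb{E}[v_t] \leq -\bigl(\mathbb{E}[v_t]\bigr)^2 + m\,\mathbb{E}[v_t].
\end{equation*}
Since the hypotheses yield $v_0 < \min(0, m)$, comparison with the ODE $y' = -y(y - m)$, $y(0) = v_0$, whose solution blows up to $-\infty$ at an explicit $t_* \in (0, \infty)$, gives $\lim_{t \to t_*} \mathbb{E}[\partial_x u(t, X_t)] = -\infty$. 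The hard part, I expect, will be the technical justification of the expectation step: the stochastic integral is only a local martingale, and $v_t$ itself may blow up almost surely strictly before the deterministic time $t_*$, so the localisation $\tau_N$ will need to be combined with integrability estimates for $Z_t^{-1}$ on $[0, t_*)$ to pass rigorously to the limit.
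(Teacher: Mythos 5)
Your proposal is correct and follows the paper's strategy in all essentials: for part (1) both arguments reduce the crossing of two characteristics to the integrated geometric Brownian motion $\int_0^t e^{-\alpha W_s}\,ds$ reaching the level $(b-a)/|u_0(b)-u_0(a)|$, and for parts (2)--(3) both derive the Bernoulli/Riccati SDE $\diff v_t = -v_t^2\,\diff t - \sum_k \partial_x\xi_k(X_t)\,v_t\circ \diff W_t^k$ for the slope along a characteristic, convert to It\^o form with correction $v_t\psi(X_t)\,\diff t$, and close the part-(3) estimate exactly as the paper does (negativity of $v_t$, Jensen, Riccati comparison). You deviate in two sub-arguments, both to good effect. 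First, to show $\int_0^t e^{-\alpha W_s}\,ds\to\infty$ a.s.\ you use an occupation-time/recurrence argument, whereas the paper argues by contradiction that finiteness of the integral would force $W_t\to-\infty$, contradicting $\limsup_t W_t=+\infty$; both are valid, yours is more direct. Second, and more substantively, for part (2) you solve the linearised equation for $w_t=1/v_t$ explicitly, $w_t=Z_t\bigl(v_0^{-1}+\int_0^t Z_s^{-1}\diff s\bigr)$ with $Z_t>0$, and read off that $w_t>0$ pathwise; this gives an \emph{almost-sure} non-blow-up statement directly, which is cleaner than the paper's route of bounding $\frac{\diff}{\diff t}\mathbb E[Y_t]$ by a Riccati inequality and inferring finiteness from $\mathbb E[Y_t]<\infty$. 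The same explicit formula also justifies the sign-preservation of $v_t$ used in part (3), which the paper asserts via the invariance of the line $Y=0$. Finally, the technical caveat you flag at the end (the expectation step requires localising a local martingale, and $v_t$ may blow up pathwise before the deterministic time $t_*$) is real, but the paper itself takes expectations without addressing it, so your proposal is, if anything, more careful on this point than the published argument.
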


\begin{theorem}[Stochastic Rankine-Hugoniot condition] \label{theorem-2}
The curve of discontinuity \\$(t,s(t)) \in [0,\infty) \times \mathbb T$ (or $\mathbb R$) of the stochastic Burgers' equation \eqref{eq0} satisfies the following:
\begin{align}
\diff s_t = \frac12 \left[(u_-(t,s(t)) + u_+(t,s(t))\right]\,\diff t + \sum_{k=1}^\infty \xi_k(s(t)) \circ \diff W_t^k,
\end{align}
where $u_\pm (t,s(t)) := \lim_{x \rightarrow s(t)^\pm} u(t,x)$ are the left and right limits of $u$.
\end{theorem}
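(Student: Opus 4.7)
The plan is to carry out a stochastic version of the classical ``pillbox'' derivation of Rankine-Hugoniot. Fix a reference time $t_0$ and pick $a<s(t_0)<b$ so that, on a short time interval about $t_0$, the shock $s(t)$ stays in $(a,b)$ and is the unique discontinuity of $u(t,\cdot)$ on $[a,b]$. Define the localised mass $M(t) := \int_a^b u(t,x)\,\diff x$ and compute $\diff M$ in two independent ways.

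For the first computation, interpret \eqref{eq0} distributionally and integrate in $x$ over the fixed interval $[a,b]$. Writing the transport terms in conservation form, namely $u\partial_x u = \partial_x(u^2/2)$ and $\xi_k\partial_x u = \partial_x(\xi_k u) - (\partial_x \xi_k)u$, the fundamental theorem applied at the smooth endpoints $a,b$ produces $\diff M$ as a sum of boundary fluxes at $a$ and $b$ together with the non-conservative source $\sum_k \int_a^b (\partial_x \xi_k)\,u\,\diff x \circ \diff W_t^k$. The same identity can be obtained rigorously by testing \eqref{eq0} against a smooth cutoff $\phi_\epsilon \to \mathbf{1}_{[a,b]}$ and passing to the limit after integration by parts in $x$.

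For the second computation, split $M(t) = \int_a^{s(t)^-} u\,\diff x + \int_{s(t)^+}^b u\,\diff x$ and apply the Stratonovich Leibniz rule to each piece, using that Stratonovich differentials obey the ordinary calculus. This yields a boundary term $\bigl(u_-(t,s(t)) - u_+(t,s(t))\bigr) \circ \diff s(t)$ together with bulk integrals of $\diff u$ over the smooth regions $(a,s(t))$ and $(s(t),b)$, which can be evaluated by the same conservative manipulations as in the first computation, this time producing boundary values at both $s^\pm$ and at $a,b$.

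Subtracting the two expressions for $\diff M$, all terms at $a$, $b$ and the non-conservative source cancel, leaving the identity
\begin{align*}
(u_- - u_+) \circ \diff s(t) + \tfrac{1}{2}(u_+^2 - u_-^2)\,\diff t - \sum_{k=1}^\infty \xi_k(s(t))\,(u_- - u_+) \circ \diff W_t^k = 0.
\end{align*}
Dividing by $u_- - u_+ \neq 0$ (the defining property of a genuine shock) and simplifying via $u_+^2 - u_-^2 = -(u_- - u_+)(u_+ + u_-)$ produces the stated Stratonovich SDE for $s(t)$. The main technical obstacle is justifying the Stratonovich Leibniz rule when the endpoint $s(t)$ is itself a semimartingale driven by the cylindrical noise, along with the cutoff-to-sharp limit in the weak formulation of \eqref{eq0}; once these are in place, the rest is bookkeeping.
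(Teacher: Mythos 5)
Your formal computation is right and lands on the correct SDE, but the step you defer at the end --- ``justifying the Stratonovich Leibniz rule when the endpoint $s(t)$ is itself a semimartingale'' --- is not a technicality to be postponed: it is the entire content of the proof, and it is exactly the point where the paper has to import non-standard machinery. To differentiate $t\mapsto\int_a^{s(t)}u(t,x)\,\diff x$ you are effectively applying an It\^o--Wentzell formula to $F(t,y)=\int_a^y u(t,x)\,\diff x$ evaluated at $y=s(t)$. But Theorem \ref{Ito-Wentzell} requires $C^3$ spatial regularity of the field, whereas here $\partial_y F(t,\cdot)=u(t,\cdot)$ jumps precisely at $y=s(t)$ and $\partial_{yy}F(t,\cdot)$ contains a Dirac mass at the moving shock. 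A naive second-order expansion therefore produces a term of the form $\tfrac12\,\partial_{yy}F(t,s(t))\,\diff[s]_t$, i.e.\ a local time of the semimartingale $s$ at its own (moving) position; equivalently, the Stratonovich integral of a function discontinuous exactly at the evaluation point is ambiguous (should the boundary flux be $u_-$, $u_+$, or their average?). Since $s$ has nontrivial quadratic variation, this ambiguity cannot be waved away as in the deterministic pillbox argument, where $s$ is $C^1$ and the one-sided Leibniz rule is elementary. Until you prove that the boundary contribution from the region $(a,s(t))$ really is $u_-(t,s(t))\circ\diff s_t$ with no local-time correction, the derivation is incomplete.

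For comparison, the paper avoids differentiating anything across the shock. It works with the space-time weak formulation \eqref{stoch-burger-weak}, splits each slab $[\tau_{n-1},\tau_n)\times\mathbb T$ into the regions $\Omega_\pm^n$ on either side of the shock curve, and applies a Green's theorem valid for Jordan boundaries of box-counting dimension $d<2$ (Lemma \ref{shapiro}, Harrison--Norton), which covers the Brownian-type graph $(t,s(t))$; the resulting contour integral along the shock is defined as a limit along smooth approximations of the boundary, which is what produces the Stratonovich integrals $\int\varphi\,\xi(s(t))u_\pm\circ\diff W_t$ and $\int \varphi\, u_\pm\,\diff s_t$ unambiguously with the one-sided traces. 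Your pillbox over $[t,t+\delta]\times[a,b]$ is the same divergence-theorem idea specialised to a rectangle, so the two routes are morally equivalent; the honest version of your argument would either reprove a stochastic Reynolds transport theorem for a semimartingale boundary (handling the local-time issue directly), or fall back on exactly the fractal-boundary Green's theorem the paper uses. Either way, the ``bookkeeping'' is the easy part in both approaches, and your algebra there is correct.
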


\begin{theorem}[Well-posedness in the inviscid case] \label{theorem-3}
Let $u_{0}\in H^{s} (\mathbb{T})$ for some $s > 2$ fixed. Then there exists a pathwise unique $H^s$-maximal solution $(\tau_{max},u)$ of the 1D Burgers' equation \eqref{eq0} in the sense of Definition \ref{def:maximal} with initial datum $u_0$. %Therefore, if $(\tau',u')$ is another maximal solution, then necessarily %$\tau_{max}=\tau'$, $u=u'$ on %$[0,\tau_{max})$. 
Moreover, either $\tau_{max}=\infty$ or $\lim\sup_{t \rightarrow \tau_{max}} \norm{u(t)}_{H^{s}} = \infty,$ a.s.
\end{theorem}

\begin{theorem}[Global well-posedness in the viscous case] \label{theorem-4}
Let $u_{0} \in H^{s} (\mathbb{T})$ for some $s> 2$ fixed. Then there exists a pathwise unique maximal global $H^s$-solution $u$ of the viscous stochastic Burgers' equation (\ref{eq0}) with $\nu > 0$. 
\end{theorem}

\begin{remark}
Theorems \ref{theorem-3} and \ref{theorem-4} can be extended in a straightforward manner to the full line $\mathbb R$ and to higher dimensions.
\end{remark}
\begin{remark}
We prove Theorem \ref{theorem-3} for a more general noise $\mathcal Q u \circ dW_t$, where $\mathcal Q$ is a first order linear differential operator, which includes the transport noise  as a special case.
For the sake of clarity, our proof deals only with one noise term $\mathcal{Q}u \circ dW_t$, however, we can readily extend this to cylindrical noise with countable set of first order linear differential operators
\[ \displaystyle\sum_{k=1}^{\infty}\mathcal{Q}_{k}(u) \circ \diff W^{k}_{t},\]
by imposing certain smoothness and boundedness conditions for the sum of the coefficients. We also prove Theorem \ref{theorem-4} for one noise term. 
\end{remark}

\subsection{Structure of the paper} This manuscript is organised as follows. In Section \ref{section2} we review some classical mathematical deterministic and stochastic background. We also fix the notations we will employ and state some definitions.
Section \ref{section3} contains the main results regarding shock formation in the stochastic Burgers' equation. Using a characteristic argument, we show that noise cannot prevent shocks from occurring for certain classes of $\{\xi_k(\cdot)\}_{k \in \mathbb N}$. Moreover, we prove that these shocks satisfy a Rankine-Hugoniot type condition in the weak formulation of the problem.
In Section \ref{section4}, we show local well-posedness of the stochastic Burgers' equation in Sobolev spaces and a blow-up criterion. We also establish global existence of smooth solutions of a viscous version of the stochastic Burgers' equation, which is achieved by proving a stochastic analogue of the maximum principle. In Section \ref{section5}, we provide conclusions, propose possible future research lines, and comment on several open problems that are left to study.

\section{Preliminaries and notation}     \label{section2}
Let us begin by reviewing some standard functional spaces and mathematical background that will be used throughout this article. Sobolev spaces are given by
\[ W^{s,p}:=\lbrace f\in L^{p}(\mathbb{T},\mathbb{R}):  (I-\partial_{xx})^{s/2}f\in L^{p}(\mathbb{T},\mathbb{R}) \rbrace, \]
for any $s\geq0$ and $p\in[1,\infty),$ equipped with the norm $||f||_{W^{s,p}}=||(I-\partial_{xx})^{s/2}f||_{L^{p}}$. We will also use the notation $\Lambda^{s}=(-\partial_{xx})^{s/2}$. Recall that $L^{2}$ based spaces are Hilbert spaces and may alternatively be denoted by $H^{s}=W^{s,2}$. For $s>0$, we also define $H^{-s}:=(H^{s})^{\star}$, i.e. the dual space of $H^s$. Let us gather here some well-known Sobolev embedding inequalities:
\begin{eqnarray}
\left\Vert f \right\Vert_{L^{4}} &\lesssim& \left\Vert f \right\Vert ^{1/2}_{L^{2}}  \left\Vert \partial_{x} f \right\Vert ^{1/2}_{L^{2}}, \label{Sob:ine1} \\
\left\Vert \partial_{x} f \right\Vert_{L^{4}} &\lesssim& \left\Vert f \right\Vert_\infty^{1/2} \left\Vert \partial_{xx} f \right\Vert ^{1/2}_{L^{2}}, \label{Sob:ine3} \\
\left\Vert f \right\Vert_\infty &\lesssim& \left\Vert f \right\Vert_{H^{1/2+\epsilon}}, \ \ \text{for every } \epsilon>0. \label{Sob:ine2}
\end{eqnarray}
Let us also recall the well-known commutator estimate of Kato and Ponce:
\begin{lemma}[\cite{katoponce}]
If $s\geq 0$ and $1<p<\infty$, then
\begin{equation}\label{katoponce}
\left|\left| \Lambda^{s}(fg)-f\Lambda^{s}(g)\right|\right|_{L^{p}} \leq C_{p}\left(\left\Vert \partial_x f \right\Vert_\infty||\Lambda^{s-1}g||_{L^{p}}+||\Lambda^{s}f||_{L^{p}} \left\Vert g \right\Vert_\infty \right).
\end{equation}
\end{lemma}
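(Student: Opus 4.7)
The plan is to prove the Kato–Ponce commutator estimate through a Littlewood–Paley paraproduct decomposition. First I would introduce the dyadic projectors $\Delta_j$ and partial sums $S_j = \sum_{k \leq j-1} \Delta_k$, recall the Littlewood–Paley characterization $\|\Lambda^s h\|_{L^p} \sim \bigl\| (\sum_{j} 2^{2js}|\Delta_j h|^2)^{1/2} \bigr\|_{L^p}$ for $1<p<\infty$, and then decompose the product using Bony's paraproduct,
\begin{equation*}
fg = T_f g + T_g f + R(f,g), \qquad T_f g := \sum_{j} S_{j-2} f \,\Delta_j g, \qquad R(f,g) := \sum_{|j-k|\leq 1} \Delta_j f \,\Delta_k g.
\end{equation*}
Applying $\Lambda^s$ and subtracting $f\Lambda^s g = T_f \Lambda^s g + T_{\Lambda^s g} f + R(f,\Lambda^s g)$, the commutator splits as
\begin{equation*}
\Lambda^s(fg) - f\Lambda^s g = \bigl[\Lambda^s, T_f\bigr] g \;+\; \bigl(\Lambda^s(T_g f) - T_{\Lambda^s g} f\bigr) \;+\; \bigl(\Lambda^s R(f,g) - R(f,\Lambda^s g)\bigr),
\end{equation*}
and I would bound each of the three pieces separately.

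For the second piece I would estimate $\Lambda^s(T_g f)$ by noting that each dyadic block $\Delta_j(S_{j-2}g\,\Delta_j f)$ has Fourier support in an annulus of size $2^j$, so Bernstein's inequality gives $\|\Lambda^s \Delta_j(S_{j-2}g\,\Delta_j f)\|_{L^p} \lesssim 2^{js}\|S_{j-2}g\|_\infty \|\Delta_j f\|_{L^p} \lesssim \|g\|_\infty\, 2^{js}\|\Delta_j f\|_{L^p}$, which sums via the square-function characterization to $\|g\|_\infty \|\Lambda^s f\|_{L^p}$. The companion term $T_{\Lambda^s g} f$ is handled by an analogous argument and absorbed into the same bound (using that $S_{j-2}\Lambda^s g$ is essentially $\Lambda^s S_{j-2} g$ up to a harmless remainder). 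This yields the second term $\|\Lambda^s f\|_{L^p}\|g\|_\infty$ on the right.

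For the commutator $[\Lambda^s, T_f] g = \sum_j [\Lambda^s, S_{j-2} f]\Delta_j g$ I would use the key observation that on the frequency support of $\Delta_j g$ (annulus of size $2^j$) and with the low-frequency factor $S_{j-2} f$ varying slowly, a first-order Taylor expansion / mean value bound for the symbol of $\Lambda^s$ gives $\|[\Lambda^s, S_{j-2} f]\Delta_j g\|_{L^p} \lesssim 2^{j(s-1)} \|\partial_x S_{j-2} f\|_\infty \|\Delta_j g\|_{L^p} \lesssim \|\partial_x f\|_\infty \,2^{j(s-1)}\|\Delta_j g\|_{L^p}$; summing through the square function yields $\|\partial_x f\|_\infty \|\Lambda^{s-1} g\|_{L^p}$. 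This is the step I expect to be the main technical obstacle, since making the symbol-Taylor argument rigorous requires a careful kernel estimate (one represents $\Lambda^s$ by its convolution kernel localized to the appropriate annulus and uses that $\partial_x S_{j-2} f$ controls the increments of $S_{j-2} f$).

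For the remainder term $\Lambda^s R(f,g) - R(f,\Lambda^s g)$ I would exploit the cancellation between the two pieces at each diagonal block and distribute the derivative onto whichever factor is smoother, obtaining $2^{j(s-1)} \|\partial_x \Delta_j f\|_{L^p} \|\Delta_{\sim j} g\|_\infty$-type bounds that sum to a quantity dominated by the two right-hand side terms (possibly after an interpolation step using the Sobolev embedding to trade $\|\Delta_{\sim j}g\|_\infty$ for $\|g\|_\infty$). Summing the three contributions and invoking the Littlewood–Paley equivalence delivers the stated inequality with a constant $C_p$ depending only on $p$ (and on $s$ through the number of paraproduct layers). Finally, I would remark that the estimate extends to the periodic setting by the standard adaptation of Littlewood–Paley theory to $\mathbb{T}$.
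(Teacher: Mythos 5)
The paper does not prove this lemma at all: it is quoted directly from Kato--Ponce, whose original argument is based on a first-order expansion of the symbol of the Bessel/Riesz potential plus the Coifman--Meyer bilinear multiplier theorem, not on Bony's decomposition. So I can only assess your sketch on its own terms. The overall paraproduct strategy is a legitimate modern route, and your treatment of $[\Lambda^s,T_f]g$ and of $\Lambda^s(T_gf)-T_{\Lambda^sg}f$ is essentially the standard (correct) argument, although even there the passage ``sums via the square-function characterization'' silently uses pointwise bounds by maximal functions together with the Fefferman--Stein vector-valued maximal inequality, since $L^p$ norms of individual blocks alone do not reassemble into the $L^p$ norm of the sum.

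The genuine gap is in your third step, the high--high interaction $\Lambda^s R(f,g)-R(f,\Lambda^s g)$, and you have in fact misidentified the main obstacle: the paraproduct commutator $[\Lambda^s,T_f]g$ is the routine part (there the two input frequencies are well separated, so the mean-value/Taylor gain on the symbol is clean), whereas the diagonal term is where the real work lies. First, there is no usable ``cancellation at each diagonal block'': in $\Lambda^s(\Delta_j f\,\Delta_k g)-\Delta_j f\,\Delta_k\Lambda^s g$ with $|j-k|\le 1$ the two frequencies $\xi-\eta$ and $\eta$ are comparable, not close, so $\bigl||\xi|^s-|\eta|^s\bigr|$ is of full size $2^{js}$ and the mean-value argument gives nothing. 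Second, the outputs of these blocks are supported in balls $\{|\xi|\lesssim 2^j\}$, not annuli, so the square-function characterization cannot be used to sum your term-wise bounds; summing $2^{j(s-1)}\|\partial_x\Delta_j f\|_{L^p}\|\Delta_{\sim j}g\|_\infty\sim 2^{js}\|\Delta_j f\|_{L^p}\|g\|_\infty$ by the triangle inequality only yields a $B^s_{p,1}$-type norm of $f$, which is \emph{not} controlled by $\|\Lambda^s f\|_{L^p}$. For the half $\Lambda^s R(f,g)$ one can still close using $s>0$ (the weight $2^{(m-j)s}$ over output blocks $m\le j+C$ is summable, combined with Fefferman--Stein), but for the half $R(f,\Lambda^s g)$, after spending $\|\Delta_j f\|_\infty\lesssim 2^{-j}\|\partial_x f\|_\infty$, no geometric factor in $m-j$ remains and the blockwise summation genuinely fails; this piece is a bilinear operator with a Marcinkiewicz/Coifman--Meyer symbol of order zero, and bounding it on $L^\infty\times L^p\to L^p$ requires the Coifman--Meyer theorem (or an equivalent vector-valued Calder\'on--Zygmund argument), i.e.\ precisely the ingredient on which the cited Kato--Ponce proof rests. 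The ``interpolation step using Sobolev embedding to trade $\|\Delta_{\sim j}g\|_\infty$ for $\|g\|_\infty$'' is a red herring: $\|\Delta_k g\|_\infty\lesssim\|g\|_\infty$ is immediate, and the difficulty is the summation, not that exchange. Two minor points: the constant necessarily depends on $s$ as well as $p$, and since the paper's $\Lambda^s=(-\partial_{xx})^{s/2}$ is the homogeneous operator on $\mathbb{T}$, the zero Fourier mode needs a word when you transfer the Littlewood--Paley apparatus to the periodic setting.
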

We will also use the following result as main tool for proving the existence results and blow-up criterion:
\begin{theorem}[\cite{stochBouss2018}]\label{generalcancellations}
Let $\mathcal{Q}$ be a linear differential operator of first order
\[ \mathcal{Q}= a(x) \partial_{x} + b(x) \]
where the coefficients are smooth and bounded. Then for $f\in H^{2}(\mathbb{T},\mathbb{R})$ we have 
\begin{equation}\label{eq:cancellation1:thm}
\langle \mathcal{Q}^2 f, f \rangle_{L^2} +  \langle \mathcal{Q} f, \mathcal{Q} f \rangle_{L^2} \lesssim ||f||_{L^2}^2.
\end{equation}
Moreover, if $f\in H^{2+s}(\mathbb{T},\mathbb{R})$, and $\mathcal{P}$ is a pseudodifferential operator of order $s,$ then
\begin{equation}\label{eq:cancellation2:thm}
\langle \mathcal{P} \mathcal{Q}^2 f, \mathcal{P} f \rangle_{L^2} +  \langle \mathcal{P} \mathcal{Q} f, \mathcal{P} \mathcal{Q} f \rangle_{L^2} \lesssim ||f||_{H^s}^2, 
\end{equation}
for every $s\in[1, \infty)$. 
\end{theorem}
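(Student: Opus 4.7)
My plan is to view both inequalities as consequences of $\mathcal{Q}$ being skew-adjoint modulo a zeroth order multiplication operator. Indeed, integrating by parts once yields $\mathcal{Q}^{*} = -a\partial_x - a' + b$, so that
$$
\mathcal{Q} + \mathcal{Q}^{*} = 2b - a' =: c(x),
$$
with $c$ smooth and bounded together with its derivatives. For \eqref{eq:cancellation1:thm} I would use this to write $\langle \mathcal{Q}^2 f, f\rangle = \langle \mathcal{Q} f, \mathcal{Q}^{*} f\rangle = -\|\mathcal{Q} f\|_{L^2}^2 + \langle \mathcal{Q} f, c f\rangle$, whence
$$
\langle \mathcal{Q}^2 f, f\rangle + \|\mathcal{Q} f\|_{L^2}^2 = \langle \mathcal{Q} f, c f\rangle = \int \bigl( a c\, f\, \partial_x f + b c\, f^2\bigr) \dx.
$$
A single additional integration by parts on $a c\, f\, \partial_x f = \tfrac12 a c\, \partial_x(f^2)$ yields an expression of the form $\int(\ldots) f^2 \dx$ with bounded coefficients, hence $\lesssim \|f\|_{L^2}^2$.

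For \eqref{eq:cancellation2:thm} I would set $\mathcal{R} := [\mathcal{P},\mathcal{Q}]$. By the standard symbolic calculus, the commutator of an order $s$ operator with a first order operator is again of order $s$, so $\|\mathcal{R} f\|_{L^2} \lesssim \|f\|_{H^s}$; applying the same fact once more, $[\mathcal{R},\mathcal{Q}]$ is still of order $s$. Commuting $\mathcal{P}$ past the two factors of $\mathcal{Q}$ gives
$$
\mathcal{P}\mathcal{Q} = \mathcal{Q}\mathcal{P} + \mathcal{R},\qquad \mathcal{P}\mathcal{Q}^2 = \mathcal{Q}^2\mathcal{P} + \mathcal{Q}\mathcal{R} + \mathcal{R}\mathcal{Q},
$$
so that
$$
\langle \mathcal{P}\mathcal{Q}^2 f, \mathcal{P} f\rangle + \|\mathcal{P}\mathcal{Q} f\|_{L^2}^2 = \bigl[\langle \mathcal{Q}^2\mathcal{P} f, \mathcal{P} f\rangle + \|\mathcal{Q}\mathcal{P} f\|_{L^2}^2\bigr] + \langle \mathcal{Q}\mathcal{R} f, \mathcal{P} f\rangle + \langle \mathcal{R}\mathcal{Q} f, \mathcal{P} f\rangle + 2\langle \mathcal{Q}\mathcal{P} f, \mathcal{R} f\rangle + \|\mathcal{R} f\|_{L^2}^2.
$$
Applying \eqref{eq:cancellation1:thm} with $g=\mathcal{P} f$ controls the bracketed term by $\|\mathcal{P} f\|_{L^2}^2 \lesssim \|f\|_{H^s}^2$, while $\|\mathcal{R} f\|_{L^2}^2 \lesssim \|f\|_{H^s}^2$ directly.

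The delicate point is that each of the three remaining terms individually contains a factor of $\mathcal{Q}\mathcal{P} f$, which is controlled only by $\|f\|_{H^{s+1}}$ and so costs one extra derivative. Writing $X := \langle \mathcal{R} f, \mathcal{Q}\mathcal{P} f\rangle$, the identity $\mathcal{Q}^{*} = -\mathcal{Q} + c$ yields $\langle \mathcal{Q}\mathcal{R} f, \mathcal{P} f\rangle = -X + O(\|f\|_{H^s}^2)$, and $\mathcal{R}\mathcal{Q} = \mathcal{Q}\mathcal{R} + [\mathcal{R},\mathcal{Q}]$ together with $[\mathcal{R},\mathcal{Q}]$ being of order $s$ gives $\langle \mathcal{R}\mathcal{Q} f, \mathcal{P} f\rangle = -X + O(\|f\|_{H^s}^2)$. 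Finally, by symmetry of the real $L^2$ pairing, $2\langle \mathcal{Q}\mathcal{P} f, \mathcal{R} f\rangle = 2X$; summing the three contributions gives $2X - X - X = 0$ at top order, leaving only the $O(\|f\|_{H^s}^2)$ remainders.

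The main obstacle I anticipate is the careful bookkeeping of orders through these commutator reductions: taken individually, each dangerous term loses one derivative, and only their antisymmetric combination, together with the non-trivial fact that a second commutator with $\mathcal{Q}$ does not further raise the principal order, produces enough cancellation to land the right-hand side at $\|f\|_{H^s}^2$ rather than $\|f\|_{H^{s+1}}^2$.
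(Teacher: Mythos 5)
Your proof is correct. Note that the paper itself offers no proof of this statement --- it is quoted from \cite{stochBouss2018} --- so there is nothing in the text to compare against; judged on its own terms, your argument is sound and, if anything, more structural than the direct expand-and-integrate-by-parts computations used in \cite{stochBouss2018,stochEuler2017}. The identity $\mathcal{Q}+\mathcal{Q}^{*}=2b-a'=:c$ reduces \eqref{eq:cancellation1:thm} to $\langle \mathcal{Q}f,cf\rangle=\int\bigl(bc-\tfrac12(ac)'\bigr)f^{2}\,\diff x\lesssim\|f\|_{L^{2}}^{2}$, which is valid since smoothness of $a,b$ on the compact torus gives boundedness of all their derivatives; and your commutator bookkeeping for \eqref{eq:cancellation2:thm} correctly isolates the single dangerous quantity $X=\langle\mathcal{R}f,\mathcal{Q}\mathcal{P}f\rangle$ (which alone would cost $\|f\|_{H^{s}}\|f\|_{H^{s+1}}$) and shows it appears with total coefficient $2-1-1=0$, which is precisely the content of the lemma. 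Two points deserve to be made explicit rather than left implicit: first, the claims that $\mathcal{R}=[\mathcal{P},\mathcal{Q}]$ and $[\mathcal{R},\mathcal{Q}]$ have order $s$ require $\mathcal{P}$ to belong to a symbol class closed under composition and commutation (e.g.\ $S^{s}_{1,0}$); this holds for the operators $\Lambda^{s}$ and $\partial_{x}$ actually used downstream in the paper, but is an implicit restriction on the phrase ``pseudodifferential operator of order $s$.'' Second, the hypothesis $f\in H^{2+s}$ is exactly what guarantees that every pairing in your decomposition ($\mathcal{Q}\mathcal{P}f\in H^{1}$, $\mathcal{Q}^{2}\mathcal{P}f\in L^{2}$) is a genuine $L^{2}$ inner product, so the application of \eqref{eq:cancellation1:thm} to $g=\mathcal{P}f\in H^{2}$ and the adjoint manipulations are all legitimate. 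With those two remarks added, the argument is complete.
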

\begin{remark}
Theorem \ref{generalcancellations} is fundamental for closing the energy estimates when showing well-posedness of the stochastic Burgers' equation. It permits reducing the order of a sum of terms which in principle seems hopelessly singular.
\end{remark}

\begin{remark}
As pointed out in the previous observation, in order to extend Theorem \ref{theorem-3} to cylindrical noise of the form 
$\sum_{k=1}^\infty \xi_k(x) \partial_x u(t,x) \circ \diff W_t^k$ (or in general $\sum_{k=1}^{\infty}\mathcal{Q}_{k}(u)\circ \diff W^{k}_{t}$), it is fundamental to show the cancellation property provided by Theorem \ref{generalcancellations} for such noises. This can be done under some mild Sobolev regularity assumption on the coefficients $\xi_{k}$ (respectively $a_{k}, b_{k}$). In particular, one has to precisely compute the constants $C_{k}$ hidden on the right hand-side of \eqref{eq:cancellation1:thm} and \eqref{eq:cancellation2:thm}, whose sum a priori does not have to converge. We refer the reader to Lemma A.5 in \cite{AlonHaoRohde} for a detailed calculation to deal with this extension.
\end{remark}
Next, we briefly recall some aspects of the theory of stochastic analysis. Fix a stochastic basis
 $\mathcal{S}=(\Xi,\mathcal{F},\lbrace\mathcal{F}_{t}\rbrace_{t\geq 0}, \mathbb{P},\lbrace W^{k}\rbrace_{k\in\mathbb{N}}),$ that is, a filtered probability space together with a sequence $\lbrace W^{k} \rbrace_{k\in\mathbb{N}}$ of scalar independent Brownian motions relative to the filtration $\lbrace\mathcal{F}_{t}\rbrace_{t\geq0}$ satisfying the usual conditions.  \\ \\
Given a stochastic process $X\in L^{2}(\Xi;L^{2}([0,\infty);L^{2}(\mathbb{T},\mathbb{R}))),$ the Burkholder-Davis-Gundy inequality is given by
\begin{equation}\label{BGD:ineq}
 \mathbb{E}\left[ \displaystyle \sup_{s \in [0,T]}\left| \int_{0}^{t} X_{s} \diff W_ {s}\right|^{p} \right] \leq C_p \mathbb{E} \left[ \int_{0}^{T} |X_s|^{2} \diff t \right]^{p/2},
\end{equation}
for any $p\geq 1$ and $C_{p}$ an absolute constant depending on $p$.\\ \\
We also state the celebrated It\^o-Wentzell formula, which we use throughout this work.
\begin{theorem}[\cite{kunita81}, Theorem 1.2] \label{Ito-Wentzell}
For $0\leq t<\tau$, let $u(t,\cdot)$ be $C^3$ almost surely, and $u(\cdot,x)$ be a continuous semimartingale satisfying the SPDE
\begin{align}
u(t,x) = u(0,x) + \sum^\infty_{j=0} \int_{0}^{t} \sigma_j(s,x) \circ \diff N_s^j,
\end{align}
where $\{N_t^j\}_{j = 0}^\infty$ is a family of continuous semimartingales and $\{\sigma_j(t,x)\}_{j=0}^\infty$ is also a family of continuous semimartingales that are $C^2$ in space for $0\leq t<\tau$. Also, let $X_t$ be a continuous semimartingale. Then, we have the following
\begin{align} \label{Ito-Wentzell-formula}
u(t,X_t) = u(0,X_0) + \sum^\infty_{j=0} \int^t_0 \sigma_j(s,X_s) \circ \diff N^j_s + \int^t_0 \partial_x u (s,X_s) \circ \diff X_s.
\end{align}
\end{theorem}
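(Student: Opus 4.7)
The plan is to follow the classical strategy for proving the Itô-Wentzell formula, reducing it to the ordinary Itô formula via a time discretization and exploiting the fact that the Stratonovich form is essentially the chain rule. First I would rewrite everything in Itô form, because the moment estimates one needs to control the limits are cleaner in Itô notation; the Stratonovich correction terms can then be reassembled at the end. So, writing $\diff N_s^j = \diff A_s^j + \diff M_s^j$ as a sum of a finite-variation part and a local martingale, and similarly decomposing $\sigma_j(s,x)$ into its bounded-variation and martingale parts, the SPDE for $u$ becomes an Itô semimartingale with a spatial-dependent drift $\mu(s,x)$ and a martingale part whose joint quadratic variation with $X$ can be identified.

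Next I would localise by a stopping time so that $\|u(s,\cdot)\|_{C^3}$ and the relevant moments of the $\sigma_j$ are uniformly bounded, and so that the infinite sum $\sum_j$ converges uniformly on compact sets (this is implicit in the summability hypotheses on $\sigma_j$). I would then introduce a partition $0 = t_0 < t_1 < \dots < t_n = t$ of mesh $|\pi|$ and perform the telescoping decomposition
\begin{equation*}
u(t,X_t) - u(0,X_0) = \sum_{i=0}^{n-1} \bigl[u(t_{i+1},X_{t_{i+1}}) - u(t_{i+1},X_{t_i})\bigr] + \sum_{i=0}^{n-1} \bigl[u(t_{i+1},X_{t_i}) - u(t_i,X_{t_i})\bigr].
\end{equation*}
For the first sum I apply the ordinary Itô formula to the deterministic-in-time function $x \mapsto u(t_{i+1},x)$ against the semimartingale $X$ on $[t_i,t_{i+1}]$, which yields an integral of $\partial_x u(t_{i+1},X_s)\,\diff X_s$ plus a quadratic-variation correction $\tfrac12 \partial_{xx} u(t_{i+1},X_s)\,\diff\langle X\rangle_s$. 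For the second sum I use the SPDE satisfied by $u$ to write $u(t_{i+1},X_{t_i}) - u(t_i,X_{t_i}) = \sum_j \int_{t_i}^{t_{i+1}} \sigma_j(s,X_{t_i}) \circ \diff N_s^j$.

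The limit $|\pi|\to 0$ is where the main obstacle sits: one must show that the naive Riemann-type sums converge to the correct stochastic integrals, and crucially that a cross-variation term
\begin{equation*}
\sum_i \bigl[u(t_{i+1},X_{t_i}) - u(t_i,X_{t_i})\bigr] - \sum_j \int^t_0 \sigma_j(s,X_s)\,\diff N_s^j
\end{equation*}
contributes an additional term equal to $\sum_j \int^t_0 \partial_x \sigma_j(s,X_s)\,\diff\langle N^j, X\rangle_s$, arising because the integrand $\sigma_j(s,X_{t_i})$ differs from $\sigma_j(s,X_s)$ by a term that has nontrivial covariation with $\diff N^j$. This is established by a second-order Taylor expansion of $\sigma_j$ in the spatial variable together with BDG and dominated-convergence estimates; the $C^3$ assumption on $u$ (hence $C^2$ on $\partial_x u$ and $\sigma_j$) enters precisely here to control the remainder. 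Combining this correction with the $\tfrac12 \partial_{xx} u\,\diff\langle X\rangle$ term from the first telescoping sum reassembles into the Stratonovich integrals $\int \sigma_j \circ \diff N^j$ and $\int \partial_x u \circ \diff X$, yielding \eqref{Ito-Wentzell-formula}.

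Finally, I would remove the stopping-time localisation and handle the infinite sum by a standard truncation-and-pass-to-limit argument, using the hypothesised summability of the $\sigma_j$ to justify exchanging the limit and the sum. The hardest technical step, as flagged above, is the identification of the cross covariation between the spatial increments of $\sigma_j(s,X_s)$ and the driving semimartingales $N_s^j$; everything else is bookkeeping with BDG and Burkholder-type moment bounds.
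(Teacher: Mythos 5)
You should first note that the paper itself offers no proof of this statement: it is quoted as Theorem 1.2 of Kunita \cite{kunita81}, so there is no internal argument to compare against, and your sketch is in the same family as the classical discretisation proof (localise, partition, telescope, apply the ordinary It\^o formula on each subinterval, identify the covariation correction, reassemble in Stratonovich form). In outline this is the right strategy.

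However, the step you single out as the crux is misattributed, and as written it would fail. After passing to It\^o form, the second telescoping sum $\sum_i\bigl[u(t_{i+1},X_{t_i})-u(t_i,X_{t_i})\bigr]$ evaluates the integrands at the frozen point $X_{t_i}$, which is the \emph{left} endpoint and $\mathcal F_{t_i}$-measurable; the error $\sum_i\int_{t_i}^{t_{i+1}}\bigl[\sigma_j(s,X_s)-\sigma_j(s,X_{t_i})\bigr]\diff N^j_s$ is then a sum of martingale increments of total $L^2$ size $O(|\pi|)$ and vanishes in the limit --- it does \emph{not} generate $\sum_j\int_0^t\partial_x\sigma_j(s,X_s)\,\diff\langle N^j,X\rangle_s$. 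A test case makes this concrete: with $\sigma(s,x)=x$ and $X=N=W$ one has $\sum_i\int_{t_i}^{t_{i+1}}(W_s-W_{t_i})\diff W_s=\tfrac12\sum_i(W_{t_{i+1}}-W_{t_i})^2-\tfrac12 t\to 0$. The cross-variation term actually comes from the \emph{first} telescoping sum: there you apply It\^o's formula to $x\mapsto u(t_{i+1},x)$, frozen at the \emph{right} time endpoint, so the integrand $\partial_x u(t_{i+1},X_s)$ differs from $\partial_x u(s,X_s)$ by the time increment $\sum_j\int_s^{t_{i+1}}\partial_x\sigma_j(r,X_s)\circ\diff N^j_r$ supplied by the SPDE, and integrating this forward-looking increment against $\diff X_s$ is what produces $\sum_j\int_0^t\partial_x\sigma_j(s,X_s)\,\diff\langle N^j,X\rangle_s$ (in the same test case, $\sum_i\int_{t_i}^{t_{i+1}}(W_{t_{i+1}}-W_s)\diff W_s\to t$). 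Once the correction is extracted from the right place, your reassembly is fine: that term, together with $\tfrac12\int\partial_{xx}u\,\diff\langle X\rangle$ and the Stratonovich-to-It\^o corrections of the SPDE itself, is exactly what is absorbed into the two Stratonovich integrals $\int\sigma_j\circ\diff N^j$ and $\int\partial_x u\circ\diff X$, and the remaining ingredients of your plan (localisation, BDG bounds, truncation of the sum over $j$) are standard.
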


Let us also introduce three different notions of solutions:
\begin{definition}[Local solution] \label{localsol}
An $H^s$-local solution $(\tau,u)$ of \eqref{eq0}, $s \geq 2,$ is a stochastic process $u:\Xi \times [0,\tau] \times \mathbb{T} \rightarrow \mathbb{R},$ where $\tau: \Xi \rightarrow [0, \infty)$ is a stopping time such that $u_{t \wedge \tau}$ is adapted to $(\mathcal{F}_{t})_t$ and we have:
\begin{itemize}
    \item a.s. $u$ has paths of class $C([0,\tau]; H^{s}(\mathbb{T})).$
    \item It holds
    {\small \begin{align*}
        u_{\tau'} - u_0  + \int_0^{\tau'} u_s\partial_{x}u_s \diff s + \sum_{i=1}^{N} \int_0^{\tau'} \xi_{i} \partial_{x}u_s  \diff W^{i}_{s}= \dfrac{1}{2} \sum_{i=1}^{N} \int_{0}^{\tau'} (\xi_{i} \partial_{x})^{2}u_s \diff s,
    \end{align*}}
    as an identity in $L^2(\mathbb{T})$, a.s., for bounded stopping times $\tau' \leq \tau$. 
\end{itemize}
When the stopping time is clear from the context, we simply write that $u$ is a solution.
\end{definition}

\begin{definition}[Maximal solution]\label{def:maximal}
A maximal solution $(\tau_{max},u)$ of \eqref{eq0} is a stochastic process $u:\Xi \times [0,\tau_{max})\times \mathbb{T} \rightarrow \mathbb{R},$ for a stopping time $\tau_{max}: \Xi \to [0,\infty]$ \footnote{Unlike a stopping time for a local solution, a stopping time for a maximal solution is allowed to take the value infinity.}, satisfying the following conditions: 
\begin{itemize}
\item $\mathbb{P} (\tau_{max} >0) = 1, $ where $\tau_{max} = \lim_{n \rightarrow \infty} \tau_n$ for an increasing sequence of stopping times $\{\tau_n\}_{n=1}^\infty$.
\item $(\tau_{n},u)$ is a local solution in the sense of Definition \ref{localsol}, for all $n\in \mathbb{N}$.
\item If $(\tau',u')$ is another pair satisfying the above two conditions and $u'=u$ on $[0,\tau'\wedge \tau_{max} )$, then $\tau'\leq  \tau_{max}$, a.s.
\end{itemize}
A maximal solution is said to be global if $\tau_{max}=\infty$, a.s.
\end{definition}

\begin{definition}[Weak solution] \label{weak-sol}
A (spatially) weak solution $(\tau,u)$ of \eqref{eq0} is a stochastic process $u:\Xi \times [0,\tau] \times \mathbb{T} \rightarrow \mathbb{R},$ where $\tau: \Xi \rightarrow [0, \infty)$ is a stopping time, such that for any test function $\phi \in C^\infty(\mathbb T)$, $\langle u_{t \wedge \tau}, \phi \rangle_{L^2}$ is adapted to $(\mathcal{F}_{t})_t$ and we have:
\begin{itemize}
\item
a.s. $u$ has paths of class $C([0,\tau]; L^2(\mathbb{T})).$
\item It holds
\begin{align}
&  \langle u_{\tau'}, \phi \rangle_{L^2} = \langle u_0, \phi \rangle_{L^2} +   \frac12 \int_0^{\tau'} \langle (u_s)^2, \partial_x \phi \rangle_{L^2} \diff s \nonumber \\
& \hspace{10pt} + \sum_{i=1}^N \int_0^{\tau'} \langle u_s, \partial_x  (\xi_i \phi ) \rangle_{L^2} \diff W_s^i + \dfrac{1}{2} \sum_{i=1}^{N} \int_{0}^{\tau'} \langle u_s, \partial_x(\xi_i \partial_x(\xi_i \phi)) \rangle_{L^2} \diff s , \label{stoch-burger-weak}
\end{align}
a.s., for bounded stopping times $\tau' \leq \tau$.
\end{itemize}
\end{definition}

\subsubsection*{Notations:} Let us stress some notations that we will use throughout this work. We will denote the Sobolev $L^{2}$ based spaces by $H^{s}(\text{domain}, \text{target space})$. However, we will sometimes omit the domain and target space and just write $H^s$, when these are clear from the context. $a\lesssim b$ means there exists $C$ such that $a \leq Cb$, where $C$ is a positive universal constant that may depend on fixed parameters and constant quantities. Note also that this constant might differ from line to line. It is also important to remind that the condition ``almost surely'' is not always indicated, since in some cases it is obvious from the context.

\section{Shocks in Burgers' equation with stochastic transport}    \label{section3}
 Recall that we are dealing with a stochastic Burgers' equation of the form
\begin{align*}
\diff u + \left(u(t,x) \diff t + \sum_{k=1}^\infty \xi_k(x) \circ \diff W_t^k \right) \cdot \partial_x u = \nu \partial_{xx} u \diff t,
\end{align*}
for $x \in \mathbb T$ or $\mathbb R,$ where $\nu \geq 0$ is constant, $\{\xi_k(x)\}_{k \in \mathbb N}$ is an orthonormal basis of some separable Hilbert space $\mathcal H,$ and $\circ$ means that the integration is carried out in the Stratonovich sense.
In this section, we study the problem of whether shocks can form in the inviscid Burgers' equation with stochastic transport. By using a characteristic argument, we prove that for some classes $\{\xi_k(x)\}_{k \in \mathbb N}$, the transport noise cannot prevent shock formation. We also consider a weak formulation of the problem and prove that the shocks satisfy a stochastic version of the Rankine-Hugoniot condition.

\subsection{Inviscid Burgers' equation with stochastic transport}
The inviscid Burgers' equation with stochastic transport is given by
\begin{align} \label{1d-stoch-burger}
\diff u + \left(u(t,x) \,\diff t + \sum_{k=1}^\infty \xi_k(x) \circ \diff W_t^k\right) \cdot \partial_x u = 0,
\end{align}
which in integral form is interpreted as
\begin{align} \label{1d-stoch-burger-int}
u(t,x) = u(0,x) - \int_0^t \left(u(s,x) \partial_x u(s,x) \,\diff s + \sum_{k=1}^\infty \xi_k(x) \partial_x u(s,x) \circ \diff W_s^k \right),
\end{align}
for all $x \in \mathbb T$ or $\mathbb R$. Also, we will assume throughout this paper that the initial condition is positive, that is, $u(0,x) > 0$ for all $x \in \mathbb T$ or $\mathbb R$.

Consider a process $X_t$ that satisfies the {\em 
Stratonovich} SDE 
\begin{align}
\diff X_t &= u(t,X_t) \diff t + \sum_{k=1}^\infty \xi_k(X_t) \circ \diff W_t^k, \label{characteristic-sde}
\end{align}
which in It\^o form, reads
\begin{align}
\diff X_t = \left(u(t,X_t) + \frac12 \sum_{k=1}^\infty \xi_k(X_t) \partial_x\xi_k(X_t) \right)\diff t + \sum_{k=1}^\infty \xi_k(X_t) \diff W_t^k. \label{char-ito}
\end{align}
We call this process the {\em characteristic} of $\eqref{1d-stoch-burger}$, analogous to the characteristic lines in the deterministic Burgers' equation. We assume the following conditions on $\{\xi_k(\cdot)\}_{k \in \mathbb N}$.

\begin{assump} \label{xi-assump}
$\xi_k$ is smooth for all $k \in \mathbb N$ and together with the Stratonovich-to-It\^o correction term $\varphi(x):=\frac12 \sum_{k=1}^\infty \xi_k(x) \partial_x\xi_k(x)$, satisfy the following:
\begin{itemize}
    \item Lipschitz continuity
\begin{align}
|\varphi(x) - \varphi(y)| \leq C_0 |x-y|, \quad 
|\xi_k(x) - \xi_k(y)| \leq C_k |x-y|, \quad k \in \mathbb N, \label{lipschitz}
\end{align}
\item Linear growth condition
\begin{align}
|\varphi(x)| \leq D_0 (1 + |x|), \quad |\xi_k(x)| \leq D_k (1+|x|), \quad k \in \mathbb N \label{linear-bound}
\end{align}
\end{itemize}
for real constants $C_0, C_1, C_2, \ldots$ and $D_0, D_1, D_2, \ldots$ with
\begin{align}
    \sum_{k=1}^\infty C_k^2 < \infty, \quad \sum_{k=1}^\infty D_k^2 < \infty. \label{summability}
\end{align}
\end{assump}
Provided $u(t,\cdot)$ is sufficiently smooth and bounded (hence satisfying Lipschitz continuity and linear growth) until some stopping time $\tau$, and $\{\xi_k(\cdot)\}_{k \in \mathbb N}$ satisfies the conditions in Assumption \ref{xi-assump}, the characteristic equation \eqref{char-ito} is locally well-posed.
One feature of the multiplicative noise in \eqref{1d-stoch-burger} is that $u$ is transported along the characteristics, that is, we can show that $u(t,x) = (\Phi_t)_*u_0(x)$ for $0\leq t < \tau_{max},$ where $\Phi_t$ is the stochastic flow of the SDE \eqref{char-ito}, $(\Phi_t)_*$ represents the pushforward by $\Phi_t,$ and $(\tau_{max}, X_t)$ is the maximal solution of \eqref{char-ito}. This is an easy corollary of the It{\^o}-Wentzell formula \eqref{Ito-Wentzell-formula}.

\begin{corollary} \label{advection}
Let $u(t,\cdot)$ be $C^3 \cap L^\infty$ in space for $0<t<\tau$. Assume also that $u(\cdot,x)$ is a continuous semimartingale satisfying \eqref{1d-stoch-burger-int}, $\partial_x u (\cdot,x)$ is a continuous semimartingale satisfying the spatial derivative of \eqref{1d-stoch-burger-int}, and $\{\xi_k(\cdot)\}_{k \in \mathbb N}$  satisfies the conditions in Assumption \ref{xi-assump}. If $(\tau_{max},X_t)$ is a maximal solution to \eqref{char-ito}, then $u(t,X_t) = u(0,X_0)$ almost surely for $0<t<\tau_{max}$.
\end{corollary}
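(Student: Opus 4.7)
The strategy is a one-line application of the It\^o--Wentzell formula (Theorem \ref{Ito-Wentzell}) followed by inspection of the resulting terms; essentially all the work is in verifying that the hypotheses match.

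\textbf{Setup.} Put the Burgers' equation \eqref{1d-stoch-burger-int} in the semimartingale form required by Theorem \ref{Ito-Wentzell} by setting
\[
\sigma_0(s,x) := -u(s,x)\,\partial_x u(s,x), \qquad N_s^0 := s,
\]
and for $k\geq 1$,
\[
\sigma_k(s,x) := -\xi_k(x)\,\partial_x u(s,x), \qquad N_s^k := W_s^k,
\]
so that $u(t,x)=u(0,x)+\sum_{j\geq 0}\int_0^t \sigma_j(s,x)\circ \diff N_s^j$. The regularity hypotheses on $u$ in the statement (namely $u(t,\cdot)\in C^3\cap L^\infty$ and $u(\cdot,x)$, $\partial_x u(\cdot,x)$ being continuous semimartingales) ensure that each $\sigma_j$ is $C^2$ in space and a continuous semimartingale in time for $0\leq s<\tau_{max}$, so Theorem \ref{Ito-Wentzell} applies. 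The hypotheses in Assumption \ref{xi-assump} on $\{\xi_k\}$ guarantee local well-posedness of the characteristic SDE \eqref{char-ito} and hence that $X_t$ is a well-defined continuous semimartingale on $[0,\tau_{max})$.

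\textbf{Key computation.} Applying \eqref{Ito-Wentzell-formula} with $X_t$ satisfying the Stratonovich SDE \eqref{characteristic-sde}, one obtains
\begin{align*}
u(t,X_t) &= u(0,X_0) - \int_0^t u(s,X_s)\,\partial_x u(s,X_s)\,\diff s - \sum_{k=1}^\infty \int_0^t \xi_k(X_s)\,\partial_x u(s,X_s)\circ \diff W_s^k\\
&\quad + \int_0^t \partial_x u(s,X_s)\circ \diff X_s.
\end{align*}
Substituting $\diff X_s = u(s,X_s)\,\diff s + \sum_{k=1}^\infty \xi_k(X_s)\circ \diff W_s^k$ into the last term and using that the Stratonovich and It\^o differentials agree for the $\diff s$ integral, the new drift contribution $\int_0^t \partial_x u(s,X_s)\,u(s,X_s)\,\diff s$ cancels the second term, and the new martingale contribution $\sum_k \int_0^t \partial_x u(s,X_s)\,\xi_k(X_s)\circ \diff W_s^k$ cancels the third term. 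This yields $u(t,X_t)=u(0,X_0)$.

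\textbf{Stopping-time considerations.} Strictly speaking, the It\^o--Wentzell formula is applied up to $t\wedge \tau_n$ for a localising sequence $\tau_n\nearrow \tau_{max}$ on which all processes are well-defined and integrable, and then one passes to the limit using the continuity of $u$, $\partial_x u$, and $X_t$ on $[0,\tau_{max})$ to conclude that the identity holds almost surely for every $0\le t<\tau_{max}$. The potential obstacle here is not conceptual but bookkeeping: one must be careful that the symmetry of the Stratonovich integrals (so that the ``$-\xi_k\partial_x u\circ \diff W^k$'' term written with integrand depending on the space variable $x=X_s$ matches the Stratonovich integral against $\diff X_s$ with integrand $\partial_x u(s,X_s)\xi_k(X_s)$) genuinely produces an exact cancellation rather than extra It\^o-to-Stratonovich correction terms. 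This is precisely the advantage of having worked throughout in Stratonovich form.
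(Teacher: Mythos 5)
Your proposal is correct and follows essentially the same route as the paper: identify $\sigma_0=u\,\partial_x u$, $\sigma_k=\xi_k\,\partial_x u$ with $N^0_t=t$, $N^k_t=W^k_t$, apply the It\^o--Wentzell formula \eqref{Ito-Wentzell-formula} along the characteristic, and observe that the transport terms cancel exactly against $\int_0^t \partial_x u(s,X_s)\circ \diff X_s$ (the substitution step you flag is precisely what the authors justify via their auxiliary Stratonovich substitution lemma, so no extra correction terms arise). Your added remarks on localisation up to a sequence $\tau_n\nearrow\tau_{max}$ are a harmless refinement of the same argument.
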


\begin{remark}
Notice that due to our local well-posedness result (Theorem \ref{theorem-3}) and the maximum principle (Proposition \ref{max-principle}), one has $u_t \in C^3 \cap L^\infty$ for $t < \tau_{max}$ provided $u_0$ is smooth enough and bounded. For instance, $u_0 \in H^4 \cap L^\infty$ is sufficient.
\end{remark}

\begin{proof}[Proof of Corollary \ref{advection}]
Note that under the given assumptions, $\sigma_0(t,x) := u(t,x)\partial_x u (t,x),$ and $\sigma_k(t,x) := \xi_k(x)\partial_x u(t,x)$ for all $k \in \mathbb N,$ satisfy the conditions in Theorem \ref{Ito-Wentzell}. We take $N_t^0 = t$ and $N_t^k = W_t^k$ for $k \in \mathbb N$. Using the It{\^o}-Wentzell formula \eqref{Ito-Wentzell-formula} for the stochastic field $u(t,x)$ satisfying \eqref{1d-stoch-burger-int}, and the semimartingale $X_t$, we obtain
\begin{align}
u(t,X_t) &= u(0,X_0) - \int_0^t \left(u(s,X_s)\partial_x u (s,X_s) \diff s + \sum_{k=1}^\infty \xi_k(X_s)\partial_x u (s,X_s) \circ \diff W_s^k \right) \nonumber \\
&+ \int_0^t \partial_x u (s,X_s) \circ \diff X_s \nonumber \\
&= u(0,X_0) - I_1 + I_2. \nonumber
\end{align}

Now, we have $I_1 = I_2$ almost surely so indeed, $u(t,X_t) = u(0,X_0)$ almost surely for $0<t<\tau_{max}$.
\end{proof}

\subsection{Results on shock formation}
In order to investigate the crossing of characteristics in the stochastic Burgers' equation \eqref{1d-stoch-burger} with transport noise, we define the {\em first crossing time} $\tau$ as
\begin{align} \label{first-crossing}
\tau := \inf_{\substack{a,b \in \mathbb R \\
a \neq b }}
\left\{ \inf \left\{t>0 : X_t^a = X_t^b \right\} \right\},
\end{align}
where $X_t^a, X_t^b$ are two characteristics that solve the SDE \eqref{characteristic-sde} with initial conditions $X_0^a = a$ and $X_0^b = b$. This gives us the first time when two characteristics intersect.
In the following, we will show that in the special case $\xi_1(x) = \alpha x + \beta$ (where we only consider one noise term and the other terms $\xi_k$ are identically zero for $k = 2,3,\ldots$), the first crossing time is equivalent to the first hitting time of the integrated geometric Brownian motion.
We note that in this case, equation \eqref{char-ito} is explicitly solvable, where the general solution is given by
\begin{align} \label{characteristic-sol}
X_t^\gamma &= e^{\alpha W_t} \left(\gamma + \left(u_0(\gamma) - \alpha \beta \right) \int^t_0 e^{-\alpha W_s} \diff s + \beta \int^t_0 e^{-\alpha W_s} \diff W_s \right).
\end{align}

\begin{proposition} \label{cross-time-law}
The first crossing time of the inviscid stochastic Burgers' equation \eqref{1d-stoch-burger} with $\xi_1(x) = \alpha x + \beta$ for constants $\alpha, \beta \in \mathbb R$ and $\xi_k(\cdot) \equiv 0$ for $k = 2,3,\ldots$ is equivalent to the first hitting time for the integrated geometric Brownian motion $I_t := \int^t_0 e^{-\alpha W_s}ds$.
\end{proposition}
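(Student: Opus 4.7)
The plan is to exploit the explicit formula \eqref{characteristic-sol} for $X_t^\gamma$ to reduce the crossing event $\{X_t^a = X_t^b\}$ to a level-hitting event for the increasing process $I_t$. First I would form the difference of two characteristics starting at $a \neq b$. Writing $J_t := \int_0^t e^{-\alpha W_s}\,dW_s$, the formula \eqref{characteristic-sol} reads
$$X_t^\gamma = e^{\alpha W_t}\bigl(\gamma + (u_0(\gamma) - \alpha\beta)\,I_t + \beta J_t\bigr),$$
so the terms $-\alpha\beta\,I_t$ and $\beta J_t$, being independent of $\gamma$, cancel in the difference, yielding
$$X_t^a - X_t^b = e^{\alpha W_t}\bigl[(a-b) + (u_0(a)-u_0(b))\,I_t\bigr].$$
Since $e^{\alpha W_t} > 0$ almost surely, the crossing condition is equivalent to
$$I_t \;=\; \frac{a-b}{u_0(b) - u_0(a)} \;=:\; c(a,b),$$
which is a strictly positive real number precisely when the difference quotient $(u_0(a)-u_0(b))/(a-b)$ is negative, i.e.\ when $u_0$ has a descending secant between $a$ and $b$. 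This is consistent with the standing hypothesis that $u_0$ possesses a negative slope.

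Next I would take the infimum of hitting times over admissible pairs. Since $I_t$ is pathwise continuous and strictly increasing in $t$ (its integrand $e^{-\alpha W_s}$ being a.s.\ positive), the map $c \mapsto \inf\{t > 0 : I_t = c\}$ is monotone nondecreasing on its domain of definition. Interchanging the two infima in \eqref{first-crossing} therefore gives
$$\tau \;=\; \inf_{a \neq b}\,\inf\{t > 0 : I_t = c(a,b)\} \;=\; \inf\bigl\{t > 0 : I_t = c_*\bigr\},\qquad c_* := \inf_{a \neq b,\; c(a,b)>0} c(a,b).$$
For sufficiently regular $u_0$ with strictly negative infimum slope, a short calculation shows $c_* = -1/\inf_x u_0'(x) > 0$, so $\tau$ is exactly the first hitting time of the integrated geometric Brownian motion $I_t$ at the deterministic level $c_*$, which is the claimed equivalence.

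The main subtlety I foresee is conceptual rather than computational: one must verify that the explicit formula \eqref{characteristic-sol} remains valid up to and including the crossing time, so that no pre-crossing breakdown of the characteristic flow interferes with the identity $X_t^a - X_t^b = e^{\alpha W_t}[(a-b) + (u_0(a)-u_0(b))I_t]$. This is assured by the local well-posedness of the Stratonovich SDE \eqref{char-ito} under Assumption \ref{xi-assump}, which is a linear-growth SDE with smooth coefficients and hence globally well-posed pathwise. One also needs the complete cancellation of the $\gamma$-independent terms in the difference $X_t^a - X_t^b$, so that the crossing condition isolates $I_t$ alone rather than a more complex combination involving $J_t$. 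Once these points are secured, the proposition follows immediately from the monotonicity and continuity of $I_t$.
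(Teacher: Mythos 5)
Your argument is correct and follows essentially the same route as the paper: solve the characteristic SDE explicitly, observe that the $\gamma$-independent terms (including all $\beta$-contributions) cancel in $X_t^a - X_t^b$, reduce the crossing event to $I_t$ hitting the level $c(a,b) = (a-b)/(u_0(b)-u_0(a))$, and use the continuity and strict monotonicity of $I_t$ to identify $\tau$ with the hitting time of the infimal level, which is exactly the paper's $\theta(u_0)^{-1}$. The only cosmetic difference is that the paper also records the degenerate case (no descending secant, i.e.\ $\theta(u_0)=0$), where both times are trivially infinite.
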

\begin{proof}
Consider two arbitrary characteristics $X_t^a$ and $X_t^b$ with $X_0^a = a$ and $X_0^b = b$. From \eqref{characteristic-sol}, one can check that $X_t^a = X_t^b$ if and only if
\begin{align*}
I_t := \int^t_0 e^{-\alpha W_s} \diff s = -\frac{b-a}{u_0(b)-u_0(a)}.
\end{align*}

Now, since the left-hand side is continuous, strictly increasing with $I_0 = 0,$ and independent of $a$ and $b$, we have
\begin{align}
\tau &= \inf_{\substack{a,b \in \mathbb R \\
a \neq b }} \left\{ \inf \left\{t>0 : I_t = - \frac{b-a}{u_0(b)-u_0(a)} \right\} \right\} \nonumber \\
&= \begin{cases}
\inf \left\{t>0 : I_t = \theta(u_0)^{-1} \right\}, \quad & \text{ if } \theta(u_0) > 0 \\
\infty, \quad & \text{ if } \theta(u_0) = 0
\end{cases},
\label{hitting-time}
\end{align}
where
\begin{align*}
\theta(u_0) := \sup_{\substack{a,b \in \mathbb R \\
a \neq b }} \left\{ \zeta(a,b) \right\},
\quad \zeta(a,b) = \begin{cases}
\frac{|u_0(a)-u_0(b)|}{|a - b|}, \quad &\text{if } \frac{u_0(a)-u_0(b)}{a - b} < 0 \\
0, \quad &\text{otherwise}
\end{cases},
\end{align*}
is the steepest negative slope of $u_0$. Hence, the first crossing time is equivalent to the first hitting time of the process $I_t$.
\end{proof}

\begin{remark}
Note that the constant $\beta$ does not affect the first crossing time, hence we can set $\beta=0$ without loss of generality. Also in the following, we simply write $\xi(\cdot)$ without the index when we only consider one noise term.
\end{remark}

As an immediate consequence of Proposition \ref{cross-time-law}, we prove that the transport noise with $\xi(x) = \alpha x$ {\em cannot} prevent shocks from forming almost surely in the stochastic Burgers' equation \eqref{1d-stoch-burger}.

\begin{corollary}
Let $\xi(x) = \alpha x$ for some $\alpha \in \mathbb R$. If the initial profile $u_0$ has a negative slope, then $\tau < \infty$ almost surely.
\end{corollary}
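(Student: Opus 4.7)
\medskip

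\noindent\textbf{Proof proposal.} The plan is to combine Proposition~\ref{cross-time-law} with the almost-sure divergence of the integrated geometric Brownian motion. Since $u_0$ has a negative slope, there exist points $a\neq b$ with $(u_0(a)-u_0(b))/(a-b)<0$, so the quantity $\theta(u_0)$ defined in the proof of Proposition~\ref{cross-time-law} is strictly positive. Writing $c:=\theta(u_0)^{-1}\in(0,\infty)$, we are reduced by the second line of \eqref{hitting-time} (with $\beta$ set to $0$, as noted in the remark following Proposition~\ref{cross-time-law}) to showing
\begin{equation*}
    \tau = \inf\{t>0:I_t=c\}<\infty\quad\text{a.s.,}\qquad I_t:=\int_0^t e^{-\alpha W_s}\,\diff s.
\end{equation*}

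First I would note that $t\mapsto I_t$ is almost surely continuous and strictly increasing, starting from $I_0=0$. Consequently the intermediate value theorem reduces the claim to showing that $I_\infty:=\lim_{t\to\infty}I_t=+\infty$ almost surely. If $\alpha=0$ there is nothing to prove (we recover the deterministic Burgers' case, where negative slope yields shock formation in finite time directly). For $\alpha\neq 0$, the key input is recurrence of one-dimensional Brownian motion: almost surely,
\begin{equation*}
    \limsup_{s\to\infty}(-\alpha W_s) = +\infty,
\end{equation*}
(this is one half of the law of the iterated logarithm, and holds whatever the sign of $\alpha$).

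From recurrence I would deduce that for every $M>0$ the set $A_M:=\{s\geq 0:-\alpha W_s>M\}$ has infinite Lebesgue measure almost surely; indeed, by recurrence the hitting times of the level $M/|\alpha|$ by $W_s$ (or $-W_s$) form an almost surely unbounded, strictly increasing sequence, and between consecutive hits of this level and the slightly higher level $M/|\alpha|+1$ the Brownian path spends a positive (in fact, i.i.d.\ positive expectation) amount of time in $A_M$, so the total time in $A_M$ is an almost surely divergent sum of i.i.d.\ positive random variables. Therefore
\begin{equation*}
    I_\infty=\int_0^\infty e^{-\alpha W_s}\,\diff s\;\geq\;e^{M}\,|A_M|=+\infty \quad\text{a.s.,}
\end{equation*}
which gives $\tau<\infty$ a.s. by the reduction above.

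The only mildly technical point is the rigorous justification that $|A_M|=\infty$ a.s.; everything else is a direct application of the already-proved Proposition~\ref{cross-time-law} together with monotone continuity of $I_t$. I expect no serious obstacle, since the recurrence argument for one-dimensional Brownian motion is standard and the reduction to a hitting-time problem has already been performed.
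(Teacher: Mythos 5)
Your proposal is correct, and the reduction via Proposition \ref{cross-time-law} (negative slope $\Rightarrow \theta(u_0)>0$, continuity and strict monotonicity of $I_t$, hence it suffices that $I_\infty=\infty$ a.s.) is exactly the paper's; note only that the relevant branch of \eqref{hitting-time} is the case $\theta(u_0)>0$, and that the i.i.d.\ excursion durations you invoke in fact have infinite mean (first passage times of Brownian motion), which is harmless since divergence of a sum of i.i.d.\ strictly positive variables needs no moment assumption. Where you genuinely diverge from the paper is the proof that $\int_0^\infty e^{-\alpha W_s}\,\diff s=\infty$ a.s.: the paper argues by contradiction, showing that on the event of a finite integral the increments $\int_{t_n}^{t_{n+1}}e^{\alpha W_s}\diff s$ form a Cauchy tail, deducing from this that $e^{\alpha W_t}\to 0$, i.e.\ $W_t\to-\infty$, and then contradicting $\limsup_t W_t=+\infty$ a.s.; you instead bound the integral from below by $e^{M}$ times the occupation time of the level set $\{-\alpha W_s>M\}$ and show that occupation time is infinite via recurrence, the strong Markov property, and a divergent sum of i.i.d.\ positive excursion durations. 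Your route is somewhat longer to set up (the strong Markov excursion construction is the only technical point) but is arguably tighter: the paper's step ``the increments tend to zero, hence $e^{\alpha W_t}\to 0$'' requires an extra argument, since smallness of an integral over intervals of bounded-below length only controls the infimum of the integrand there, whereas your occupation-time bound goes directly from finiteness of the integral to finiteness of the time spent above any level, which recurrence immediately rules out. Both proofs ultimately rest on the same probabilistic input, namely $\limsup_{t\to\infty}(\pm W_t)=+\infty$ a.s., and your explicit treatment of the trivial case $\alpha=0$ is a small but welcome addition.
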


\begin{proof}
To prove this, it is enough to show that
\begin{align*}
\lim_{t \rightarrow \infty} \int^t_0 e^{\alpha W_s} \diff s = \infty \quad a.s.
\end{align*}
where we have assumed $\alpha > 0,$ without loss of generality, and $W_{\bullet} : \mathbb R_{\geq 0} \times \Xi \rightarrow \mathbb R$ is the standard Wiener process on the Wiener space $(\Xi, \mathcal F, \mathbb P),$ adapted to the natural filtration $\mathcal F_t$. This implies that $\tau < \infty$ a.s. by Proposition \ref{cross-time-law}.

First, define the set
\begin{align*}
A = \left\{\omega \in \Xi : \lim_{t \rightarrow \infty} \int^t_0 e^{\alpha W_s(\omega)} \diff s < \infty \right\} \subset \Xi.
\end{align*}

Fixing $\omega \in A$, choose $t_1, t_2, \ldots \in \mathbb R_{\geq 0}$  with $t_n < t_{n+1},$ such that $\lim_{n \rightarrow \infty} t_n = \infty$ and $\liminf_{n \rightarrow \infty} (t_{n+1}-t_n) > 0$, and consider the sequence
\begin{align*}
I_n(\omega) = \int^{t_n}_0 e^{\alpha W_s(\omega)} \diff s, \quad n=1, 2, \ldots
\end{align*}

Clearly, $\{I_n(\omega)\}_{n \in \mathbb N}$ is monotonic increasing, and it is also bounded since $\omega \in A$. Hence, it is convergent by the monotone convergence theorem, and in particular, it is a Cauchy sequence. Therefore we have
\begin{align*}
\lim_{n \rightarrow \infty} |I_{n+1}(\omega) - I_n(\omega)| = \lim_{n \rightarrow \infty} \int^{t_{n+1}}_{t_n} e^{\alpha W_s(\omega)}\diff s = 0.
\end{align*}

Since the integrand is strictly positive, this implies $\lim_{t \rightarrow \infty} e^{\alpha W_t(\omega)} = 0,$ and hence $W_t(\omega) \rightarrow -\infty$. On the other hand, for $\omega \in \Xi$ such that $W_t(\omega) \rightarrow -\infty$, it is easy to see that $\omega \in A$.
This implies that under the identification $\Xi \cong C([0,\infty);\mathbb R)$, the set $A$ is equivalent to the set of Wiener processes $W_t$ with $W_t \rightarrow -\infty$, which is open in $C([0,\infty); \mathbb R)$ endowed with the norm $\|\cdot \|_{\infty}$ and therefore measurable.
In particular, for $\omega \in A$, we have
\begin{align*}
\limsup_{t \rightarrow \infty} W_t(\omega) = -\infty,
\end{align*}
but since $\limsup_{t \rightarrow \infty} W_t = +\infty,$ a.s., this implies $\mathbb P(A) = 0$.
\end{proof}

In the following, we show that for a broader class of $\{\xi_k(\cdot)\}_{k \in \mathbb N}$, shock formation occurs in expectation provided the initial profile has a sufficiently negative slope. Moreover, no new shocks can develop from positive slopes.
We show this by looking at how the slope $\partial_x u$ evolves along the characteristics $X_t$, which resembles the argument given in \cite{stochasticCH} for the stochastic Camassa-Holm equation.

\begin{theorem}
Consider a characteristic $X_t,$ and a smooth initial profile $u(0,x)=u_0(x)$ such that $\partial_x u(0,X_0) = - \sigma < 0$. If
\[
\frac12\sum_{k=1}^\infty \left( (\partial_x \xi_k(x))^2 - \xi_k(x) \partial_{xx} \xi_k(x)\right) > -\sigma, \quad \forall x \in \mathbb R,
\]
then there exists $0<t_*<\infty$ such that $\lim_{t \rightarrow t_*} \mathbb E [\partial_x u(t,X_t)] = - \infty$.
On the other hand, if $\partial_x u(0,X_0) \geq 0$ and
\[
\frac12\sum_{k=1}^\infty \left( (\partial_x \xi_k(x))^2 - \xi_k(x) \partial_{xx} \xi_k(x)\right) < \infty, \quad \forall x \in \mathbb R,
\]
then $\partial_x u(t,X_t) < \infty$ almost surely  for all $t > 0$.
\end{theorem}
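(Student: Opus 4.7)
The strategy is to track the slope $v(t) := \partial_x u(t, X_t)$ along the Stratonovich characteristic by deriving a closed SDE for it. I would first differentiate the Burgers equation \eqref{1d-stoch-burger} in $x$ (in Stratonovich form) to obtain an SPDE for $\partial_x u$, and then apply the It\^o--Wentzell formula from Theorem \ref{Ito-Wentzell} along $X_t$. The same cancellation mechanism that produced Corollary \ref{advection} -- the $u\partial_{xx}u$ drift and the $\xi_k \partial_{xx} u$ noise cross-terms cancel against the transport contribution $\partial_{xx} u \circ dX_t$ -- leaves the closed Stratonovich SDE
\[
dv = -v^2\, dt - v \sum_{k=1}^\infty \partial_x \xi_k(X_t) \circ dW_t^k,
\]
whose It\^o form is
\[
dv = \bigl(v\,\psi(X_t) - v^2\bigr) dt - v \sum_{k=1}^\infty \partial_x \xi_k(X_t)\, dW_t^k,
\]
since a direct computation of the Stratonovich-to-It\^o correction returns precisely $\psi$. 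Because the SDE is multiplicative in $v$ (both drift and diffusion carry a factor of $v$), It\^o's formula applied to $\ln|v|$ yields an explicit semimartingale expression, so $v$ never hits $0$ and the sign of $v$ is preserved.

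When $v_0 \geq 0$, sign preservation gives $v(t) \geq 0$. If $v_0 = 0$ then $v \equiv 0$; if $v_0 > 0$, then
\[
\ln v(t) = \ln v_0 + \int_0^t \bigl(\psi(X_s) - v(s) - \tfrac{1}{2}\eta(X_s)\bigr) ds - \int_0^t \sum_{k=1}^\infty \partial_x \xi_k(X_s)\, dW_s^k,
\]
where $\eta := \sum_k (\partial_x \xi_k)^2$ is uniformly bounded by Assumption \ref{xi-assump} (Lipschitz $\xi_k$ with $\sum_k C_k^2 < \infty$ gives $\eta \leq \sum_k C_k^2$). Discarding the nonpositive contributions $-v$ and $-\eta/2$, I would bound $\ln v(t) \leq \ln v_0 + \int_0^t \psi(X_s)\, ds + (\text{local martingale in } t)$. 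The hypothesis $\psi(x) < \infty$ for all $x$, together with continuity and the compactness of $\mathbb{T}$, makes $\psi$ bounded so the drift is a.s.\ finite on each finite interval, while the stochastic integral has quadratic variation $\int_0^t \eta(X_s)\, ds$ uniformly bounded and is therefore a.s.\ finite at each $t$. Hence $\partial_x u(t, X_t) < \infty$ a.s.\ for all $t > 0$.

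When $v_0 = -\sigma < 0$, sign preservation gives $v(t) < 0$ a.s., so $y(t) := -v(t) > 0$ satisfies $dy = y(y + \psi(X_t))\, dt - y \sum_k \partial_x \xi_k(X_t)\, dW_t^k$. Setting $n(t) := \mathbb{E}[y(t)]$, taking expectation (after localization to make the stochastic integral a true martingale), and using Jensen's inequality $\mathbb{E}[y^2] \geq n^2$ together with the uniform gap $\psi \geq -\sigma + \varepsilon$ for some $\varepsilon > 0$ (delivered by the pointwise hypothesis $\psi > -\sigma$ on the compact torus $\mathbb{T}$), I obtain $\mathbb{E}[y\,\psi(X_t)] \geq (-\sigma + \varepsilon)\, n$ and therefore
\[
\frac{dn}{dt} \geq n^2 + (-\sigma + \varepsilon)\, n = n\bigl(n - (\sigma - \varepsilon)\bigr), \qquad n(0) = \sigma.
\]
ODE comparison with the separable logistic equation $\tilde n' = \tilde n(\tilde n - (\sigma - \varepsilon))$, $\tilde n(0) = \sigma$, which blows up at the explicit finite time $t_* = (\sigma - \varepsilon)^{-1} \ln(\sigma/\varepsilon)$, forces $n(t) \to +\infty$ in finite time, so $\mathbb{E}[\partial_x u(t, X_t)] = -n(t) \to -\infty$. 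The main obstacle is extracting the uniform gap $\varepsilon > 0$ from the pointwise hypothesis $\psi > -\sigma$ -- automatic on $\mathbb{T}$ by continuity and compactness, but requiring an additional structural assumption on $\mathbb{R}$ -- together with justifying the localization used to take expectations; the latter is handled by the exponential representation of $y$ that sign preservation provides, which gives enough integrability on the relevant time interval to pass to the limit.
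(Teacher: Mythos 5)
Your proposal follows essentially the same route as the paper: derive the coupled Stratonovich SDEs for $(X_t,Y_t)$ with $Y_t=\partial_x u(t,X_t)$ via It\^o--Wentzell, convert to It\^o form to pick up the drift $Y_t\,\psi(X_t)$, use sign preservation of $Y_t$, take expectations with Jensen's inequality $\mathbb{E}[Y_t^2]\geq \mathbb{E}[Y_t]^2$, and close with a Riccati/logistic comparison ODE that blows up in finite time precisely when $-\sigma$ lies below the uniform lower bound of $\psi$. The only departures are minor — you handle sign preservation and the positive-slope case pathwise through the $\ln|v|$ representation rather than through the bound on $\mathbb{E}[Y_t]$ as the paper does, which is if anything slightly cleaner — and your remark that the pointwise hypothesis $\psi>-\sigma$ must be upgraded to a uniform gap (automatic on $\mathbb{T}$ by compactness, an extra assumption on $\mathbb{R}$) correctly identifies a point the paper glosses over by simply postulating the constant $C$.
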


\begin{proof}
Taking the spatial derivative of \eqref{1d-stoch-burger-int}, and evaluating the stochastic field $\partial_x u(t,x)$ along the semimartingale $X_t$ by the Ito-Wentzell formula \eqref{Ito-Wentzell-formula} (again, this is valid due to the local well-posedness result, Theorem \ref{theorem-3}), the process $Y_t := \partial_x u(t,X_t)$ together with $X_t$ satisfy the following coupled Stratonovich SDEs
\begin{align}
\diff X_t &= u(t,X_t)\diff t + \sum_{k=1}^\infty \xi_k(X_t) \circ \diff W_t^k \label{x-eq}, \\
\diff Y_t &= -Y_t^2\diff t - \sum_{k=1}^\infty \partial_x \xi_k (X_t) Y_t \circ \diff W_t^k . \label{y-eq}
\end{align}
In It\^o form, this reads
\begin{align}
\diff X_t &= \left(u(t,X_t) + \frac12 \sum_{k=1}^\infty \xi_k(X_t) \partial_x \xi_k(X_t)\right) \diff t + \sum_{k=1}^\infty \xi_k(X_t) \diff W_t^k, \label{x-eq-ito} \\
\diff Y_t &= \left(-Y_t^2 + \frac12 Y_t \sum_{k=1}^\infty \left( (\partial_x \xi_k(x))^2 - \xi_k(x) \partial_{xx} \xi_k(x)\right) \right) \diff t - \sum_{k=1}^\infty \partial_x \xi_k(X_t) Y_t \diff W_t^k .\label{y-eq-ito}
\end{align}

Taking the expectation of $\eqref{y-eq-ito}$ on both sides, we obtain
\begin{align}
\frac{\diff \mathbb E[Y_t]}{\diff t} = -\mathbb E[Y_t^2] + \frac12 \mathbb E\left[Y_t\sum_{k=1}^\infty \left( (\partial_x \xi_k(x))^2 - \xi_k(x) \partial_{xx} \xi_k(x)\right)\right]. \label{exp-y}
\end{align}

Now, assume that there exists a constant $C \in \mathbb R$ such that
\begin{align}\label{positivebound}
    C \leq \sum_{k=1}^\infty \left( (\partial_x \xi_k(x))^2 - \xi_k(x) \partial_{xx} \xi_k(x)\right),
\end{align} for all $x \in \mathbb R$.
If $Y_0 = -\sigma < 0$, we have $Y_t < 0$ for all $t > 0,$ since $Y = 0$ is a fixed line in the phase space $(X,Y)$ and therefore cannot be crossed. Hence from \eqref{positivebound}, we have
\begin{align*}
\mathbb E\left[Y_t\sum_{k=1}^\infty \left( (\partial_x \xi_k(X_t))^2 - \xi_k(X_t) \partial_{xx} \xi_k(X_t)\right)\right] \leq C \mathbb E[Y_t],
\end{align*}
and \eqref{exp-y} becomes,
\begin{align*}
\frac{\diff \mathbb E[Y_t]}{\diff t} &\leq -\mathbb E[Y_t^2] + \frac{C}{2} \mathbb E\left[Y_t\right] \\
&= -(\mathbb E[Y_t^2] - \mathbb E[Y_t]^2) - \mathbb E[Y_t]^2 + \frac{C}{2} \mathbb E\left[Y_t\right] \\
& \leq - \mathbb E[Y_t]^2 + \frac{C}{2} \mathbb E\left[Y_t\right], 
\end{align*}
since $\mathbb E[Y_t^2] - \mathbb E[Y_t]^2 = \mathbb E\left[(Y_t - \mathbb E[Y_t])^2\right] \geq 0$.\\\\
Solving this differential inequality, we get
\begin{align*}
    \mathbb E[Y_t] \leq \begin{cases}
    \frac{-\sigma  e^{Ct/2}}{1 - \frac{2\sigma}{C} \left(e^{Ct/2} - 1\right)}, \quad &\text{if } C \neq 0 \\
    \frac{1}{t - \frac{1}{\sigma}}, \quad &\text{if } C=0
    \end{cases}.
\end{align*}
The right-hand side tends to $-\infty$ in finite time provided $-\sigma < C/2$.

Hence, if 
\begin{align*}
    -\sigma < C/2 \leq \frac12 \sum_{k=1}^\infty \left( (\partial_x \xi_k(x))^2 - \xi_k(x) \partial_{xx} \xi_k(x)\right),
\end{align*} 
for all $x \in \mathbb R$, then there exists $t_* < \infty$ such that $\lim_{t \rightarrow t_*} \mathbb E [u_x(t, X_t)] = - \infty$.

Similarly, if $
\frac12\sum_{k=1}^\infty \left( (\partial_x \xi_k(x))^2 - \xi_k(x) \partial_{xx} \xi_k(x)\right) < D$ for some $D \in \mathbb R$, then for $Y_0 > 0$ we have again
\begin{align*}
    \frac{\diff \mathbb E[Y_t]}{\diff t} \leq -\mathbb E[Y_t]^2 + D \mathbb E[Y_t].
\end{align*}
One can check that $\mathbb E[Y_t] < \infty$ for all $t>0,$ which implies $Y_t < \infty$ almost surely.
\end{proof}

\begin{remark}
Blow-up in expectation does not imply pathwise blow-up. It is merely a necessary condition, which suggests that the law of $\partial_x u$ becomes increasingly fat-tailed with time, making it more likely for it to take extreme values. Nonetheless, it is a good indication of blow-up occurring with some probability.
 \end{remark}

\begin{example}
Consider the set $\{\xi_k(x)\}_{k \in \mathbb N} = \left\{\frac{1}{k^2}\sin(kx), \frac{1}{k^2}\cos(kx)\right\}_{k \in \mathbb N}$, which forms an orthogonal basis for $L^2(\mathbb T)$. Then, one can easily check that
\begin{align*}
    0 < \sum_{k=1}^\infty \left( (\partial_x \xi_k(x))^2 - \xi_k(x) \partial_{xx} \xi_k(x)\right) < \infty,
\end{align*}
for all $x \in \mathbb T,$ so blow-up occurs in expectation for any initial profile with negative slope, but no new shocks can form from positive slopes.
\end{example}

\subsection{Weak solutions}
We saw that if the initial profile $u_0$ has a negative slope, then shocks may form in finite time (almost surely in the linear case $\xi(x) = \alpha x$), so solutions to \eqref{1d-stoch-burger} cannot exist in the classical sense. This motivates us to consider {\em weak solutions} to \eqref{1d-stoch-burger} in the sense of Definition \ref{weak-sol}.

Suppose that the profile $u$ is differentiable everywhere except for a discontinuity along the curve $\gamma = \left\{(t,s(t)) \in [0,\infty) \times M \right\}$, where $M = \mathbb T$ or $\mathbb R$. Then the curve of discontinuity must satisfy the following for $u$ to be a solution of the integral equation \eqref{stoch-burger-weak}.

\begin{proposition}[Stochastic Rankine-Hugoniot condition] \label{stoch-RH-condition}
The curve of discontinuity $s(t)$ of the stochastic Burgers' equation in weak form \eqref{stoch-burger-weak} satisfies the following SDE
\begin{align}
\diff s_t = \frac12 \left[(u_-(t,s(t)) + u_+(t,s(t))\right]\,\diff t + \sum_{k=1}^\infty \xi_k(s(t)) \circ \diff W_t^k, \label{stoch-RH}
\end{align}
where $u_\pm (t,s(t)) := \lim_{x \rightarrow s(t)^\pm} u(t,x)$ are the left and right limits of $u$.
\end{proposition}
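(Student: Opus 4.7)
The plan is to test the weak formulation \eqref{stoch-burger-weak} against a test function localised near a generic point of the shock curve and to identify the shock-speed SDE by tracking the jump contributions that appear when the spatial integrals are split at $x = s(t)$. The strategy parallels the classical Rankine--Hugoniot derivation, but replaces the ordinary Leibniz rule for moving-boundary integrals by a stochastic version obtained from the It\^o--Wentzell formula.

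First, I would fix $\varphi \in C_0^\infty$ supported in a spacetime rectangle $[t_0, t_1] \times [a,b]$ with $a < s(t) < b$ on $[t_0, t_1]$ and $\varphi$ vanishing on the boundary of the rectangle. Introduce the process $F(t) := \int \varphi(t,x)\, u(t,x)\, \diff x$, which vanishes at $t = t_0$ and $t = t_1$, and decompose $F = F_- + F_+$ with $F_\pm(t) := \int_{\Omega_\pm(t)} \varphi u\, \diff x$, where $\Omega_-(t) = \{x : x < s(t)\}$ and $\Omega_+(t) = \{x : x > s(t)\}$. Applying the It\^o--Wentzell formula (Theorem \ref{Ito-Wentzell}) to the spatial antiderivative $H(t,y) := \int_a^y \varphi(t,x)\,u(t,x)\,\diff x$ evaluated at the semimartingale $y = s(t)$ (separately on $\bar\Omega_-$ and $\bar\Omega_+$) yields a stochastic Leibniz rule of the form
\begin{equation*}
\diff F_\pm \;=\; \pm\, \varphi(t,s(t))\,u_\pm(t,s(t)) \circ \diff s_t \;+\; \int_{\Omega_\pm(t)} \bigl[u\,\partial_t \varphi \,\diff t + \varphi \circ \diff u\bigr] \diff x,
\end{equation*}
where the final integral is interpreted via stochastic Fubini.

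Next, in each region $\Omega_\pm$ the field $u$ satisfies the inviscid Burgers SPDE classically, so I would substitute $\circ \diff u = -u\,\partial_x u\, \diff t - \sum_k \xi_k\,\partial_x u \circ \diff W_t^k$ into the volume integral and integrate by parts in $x$ on each half-line. This converts $u\,\partial_x u$ into $\tfrac12 u^2\,\partial_x \varphi$ plus a jump $\tfrac12 \varphi u_\pm^2$, and $\xi_k\,\partial_x u$ into $u\,\partial_x(\varphi \xi_k)$ plus a jump $\varphi u_\pm \xi_k$, all evaluated at $x = s(t)$. Summing $\diff F_- + \diff F_+$ and integrating in time from $t_0$ to $t_1$, the volume pieces reassemble into exactly the left-hand side of \eqref{stoch-burger-weak} and therefore vanish. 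Using $u_+^2 - u_-^2 = (u_+-u_-)(u_++u_-)$ to factor the jumps then leaves
\begin{equation*}
0 \;=\; \int_{t_0}^{t_1} \varphi(t,s(t))\,(u_+ - u_-)(t,s(t))\Bigl[-\!\circ \diff s_t + \tfrac12 (u_+ + u_-)(t,s(t))\,\diff t + \sum_{k=1}^\infty \xi_k(s(t)) \circ \diff W_t^k\Bigr].
\end{equation*}
Since the jump $(u_+ - u_-)(t, s(t))$ is nonzero by the definition of a shock and $\varphi$ can be localised arbitrarily near any point of the curve, the bracketed expression must vanish, which is precisely \eqref{stoch-RH}.

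The main obstacle is the rigorous justification of the stochastic Leibniz rule for the moving-boundary integrals $F_\pm$. This requires the one-sided traces $t \mapsto u_\pm(t,s(t))$ to be continuous semimartingales, so that It\^o--Wentzell applies to the antiderivative extended up to the boundary of $\Omega_\pm$, together with a stochastic Fubini argument to commute the $\diff x$-integration with the Stratonovich time differential. These regularity assumptions are natural for a piecewise smooth weak solution, and standard localisation near a generic smooth piece of the shock curve allows one to pass from the integrated identity to the pointwise SDE via the arbitrariness of $\varphi$.
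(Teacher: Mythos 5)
Your argument is correct in outline and arrives at the same jump relation, but it takes a genuinely different technical route from the paper. The paper also splits the domain at the shock and uses that the SPDE holds classically on either side, but it then writes the deterministic part of the integrand as a two-dimensional divergence and invokes a generalised Green's theorem for Jordan domains with non-rectifiable boundaries (Lemma \ref{shapiro}, after Shapiro and Harrison--Norton) to convert the area integral into a contour integral along $\partial\Omega_\pm^n$, the delicate point being that the shock curve $(t,s(t))$ is only H\"older continuous, and the resulting line integral along it must be interpreted as a limit over smooth approximations and then identified with a Stratonovich integral. Your replacement of this $2$D geometric step by the It\^o--Wentzell formula applied to the spatial antiderivative $H(t,y)=\int_a^y\varphi u\,\diff x$ at $y=s(t)$ is a cleaner way to handle the roughness of $s(t)$: It\^o--Wentzell is built precisely for evaluating a space--time semimartingale field along a semimartingale, so you get the stochastic Leibniz rule for the moving-boundary integrals directly, at the price of (i) extending $u|_{\Omega_\pm}$ one-sidedly so that the $C^3$ hypothesis of Theorem \ref{Ito-Wentzell} holds up to the boundary, and (ii) a stochastic Fubini step to commute $\int\diff x$ with the Stratonovich differential --- both of which you correctly flag. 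The paper's route, for its part, stays entirely within the weak formulation and handles the torus by a chart decomposition in time, which you would also need if you wanted the statement on all of $\mathbb T$ rather than locally near a piece of the curve. Two small points: the intermediate display $\diff F_\pm = \pm\,\varphi\,u_\pm\circ\diff s_t+\cdots$ has the signs reversed (gaining area on the left gives $+\varphi u_-\circ\diff s_t$ for $F_-$ and $-\varphi u_+\circ\diff s_t$ for $F_+$), though your final bracketed identity carries the correct sign; and the conclusion from the integrated identity requires $(u_+-u_-)(t,s(t))\neq 0$, which you correctly invoke as the definition of a genuine shock.
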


The main obstacle here is that the curve $s(t)$ is not piecewise smooth and therefore we cannot apply the standard divergence theorem, which is how the Rankine-Hugoniot condition is usually derived. Extending classical calculus identities such as Green's theorem on domains with non-smooth boundaries is a tricky issue, but fortunately, there have been several works that extend this result to non-smooth but rectifiable boundaries in \cite{Shapiro1957}, and to non-rectifiable boundaries in \cite{harrison1992,harrison1993,harrison1999,lyons2006}.

\begin{lemma}[Green's theorem for non-smooth boundaries] \label{shapiro}
Let $\Omega$ be a bounded domain in the $(x,y)$-plane such that its boundary $\partial \Omega$ is a Jordan curve and let $u,v$  be sufficiently regular functions in $\Omega$ (see remark \ref{remark-green} below). Then
\begin{align}
\int_\Omega \text{div}(u,v) \diff x \diff y = \oint_{\partial \Omega} \left(u \diff y - v \diff x \right),
\end{align}
where the contour integral on the right-hand side can be understood as a limit of a standard contour integral along a smooth approximation of the boundary. Here, the integral is taken in the anti-clockwise direction of the contour.
\end{lemma}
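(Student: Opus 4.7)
The plan is to prove the identity by exhausting $\Omega$ from the inside by a sequence of smoothly bounded subdomains, applying the classical (smooth) Green's theorem on each, and passing to the limit. Since the statement defines the right-hand contour integral to be the limit of smooth approximations, the main content is two-fold: first, showing the limit exists, and second, showing it agrees with $\int_\Omega \mathrm{div}(u,v)\,\diff x\,\diff y$.

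Step 1 (construction of an exhaustion). I would produce a sequence $\Omega_1 \subset \Omega_2 \subset \cdots \subset \Omega$ with $\bigcup_n \Omega_n = \Omega$ and $\partial \Omega_n$ a finite union of $C^\infty$ Jordan curves. A convenient way is to mollify the indicator function: set $f_\epsilon := \chi_\Omega * \rho_\epsilon$ with $\rho_\epsilon$ a standard mollifier, and, by Sard's theorem, pick a regular value $c_\epsilon \in (0,1)$; then $\{f_\epsilon > c_\epsilon\}$ is an open set with smooth boundary. Choosing $\epsilon = \epsilon_n \downarrow 0$ and $c_{\epsilon_n}$ so that this set is relatively compact in $\Omega$ and contains $\Omega_{n-1}$ yields the required exhaustion. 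The Jordan-curve theorem guarantees that "the interior" $\Omega$ is well-defined, so the construction is unambiguous.

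Step 2 (classical Green and passage to the limit). For each $n$, the classical theorem applied to the smooth domain $\Omega_n$ gives
\begin{equation*}
\int_{\Omega_n} \mathrm{div}(u,v)\,\diff x\,\diff y = \oint_{\partial\Omega_n}\left(u\,\diff y - v\,\diff x\right),
\end{equation*}
traversed counterclockwise. Under the regularity hypothesis (it is enough that $u,v \in C^1$ on a neighbourhood of $\overline{\Omega}$), $\mathrm{div}(u,v)$ is bounded on the compact set $\overline{\Omega}$. Since $\chi_{\Omega_n} \to \chi_\Omega$ pointwise and is uniformly dominated by $\chi_\Omega$, dominated convergence forces the left-hand side to converge to $\int_\Omega \mathrm{div}(u,v)\,\diff x\,\diff y$. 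Consequently the sequence of contour integrals on the right also converges, and its limit, taken by definition as $\oint_{\partial\Omega}(u\,\diff y - v\,\diff x)$, equals the bulk integral on the left. Independence of the choice of approximating sequence is automatic: any other admissible $\widetilde{\Omega}_n \nearrow \Omega$ would produce bulk integrals converging to the same value, forcing the limiting contour integrals to coincide.

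Main obstacle. The delicate point is that the Jordan curve $\partial\Omega$ need not be rectifiable, so the contour integral cannot in general be written as an arc-length Stieltjes integral along $\partial\Omega$ itself; it exists only as the abstract limit produced by Step 2. Making this definition coincide with a geometric line integral requires additional regularity of $\partial\Omega$ (e.g.\ rectifiability or Lipschitz parameterisation), in which case one argues that the smooth approximations can be chosen so that their parameterisations converge in bounded variation, allowing interchange of the limit and the line integral. For the application to the stochastic Rankine--Hugoniot setting, where $\partial\Omega$ is a semimartingale graph, one further needs $\partial\Omega_n$ to approximate $\partial\Omega$ in a sense compatible with Stratonovich integration; this pathwise/stochastic refinement is the genuine technical core, for which we follow the arguments of Shapiro \cite{Shapiro1957} and subsequent works on non-rectifiable boundaries cited above.
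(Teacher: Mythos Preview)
The paper does not prove this lemma; it is quoted from the literature (Shapiro for rectifiable Jordan boundaries, Harrison--Norton and successors for non-rectifiable ones), so there is no in-paper argument to compare against. What follows is an assessment of your sketch on its own terms.

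Your exhaustion argument is the standard opening move and is fine as far as it goes, but as written it is close to circular. You take the statement's phrase ``can be understood as a limit of a standard contour integral along a smooth approximation'' as the \emph{definition} of $\oint_{\partial\Omega}(u\,\diff y - v\,\diff x)$. Under that reading, once dominated convergence gives $\int_{\Omega_n}\mathrm{div}(u,v)\to\int_\Omega\mathrm{div}(u,v)$, the contour limit exists by fiat and the identity is a tautology. The content of the cited results is precisely what you flag as the ``main obstacle'' and then defer: that the line integral along a fractal $\partial\Omega$ has an \emph{intrinsic} meaning (a Young/rough integral under the H\"older--dimension pairing $\alpha>d-1$, or a Stratonovich integral when $\partial\Omega$ is a semimartingale graph), and that this intrinsic object agrees with your abstract limit. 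Without that step you have not proved the theorem in the regime the paper actually uses it, where $\partial\Omega$ contains a Brownian-type graph.

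There is also a regularity mismatch. You assume $u,v\in C^1$ on a neighbourhood of $\overline\Omega$, which makes $\mathrm{div}(u,v)$ bounded and the dominated-convergence step trivial. The hypothesis in Remark~\ref{remark-green} is only $\alpha$-H\"older with $\alpha>d-1$; for $d>1$ this allows $\alpha<1$, so $\mathrm{div}(u,v)$ need not be a bounded (or even classically defined) function, and both sides require the more delicate interpretation that Harrison--Norton supply. Your argument does not touch this case.

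In short: your Step~1--Step~2 reproduce the trivial part of the result under an overly strong hypothesis, and the substantive part is exactly what you hand back to \cite{Shapiro1957,harrison1992}. Since the paper itself does the same (state and cite, no proof), this is not worse than the paper, but it is not a proof of the lemma as stated either.
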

\begin{remark} \label{remark-green}
For the above to hold, there must be a pay-off between the regularity of $\partial \Omega$ and the functions $u,v$ (i.e. the less regular the boundary, the more regular the integrand). In particular, the following condition is known:
\begin{itemize}
    \item $\partial \Omega$ has box-counting dimension $d < 2$ and $u,v$ is $\alpha$-H\"older continuous for any $\alpha > d-1$ (Harrison and Norton \cite{harrison1992}).
\end{itemize}
\end{remark}

\begin{proof}[Proof of Theorem \ref{stoch-RH-condition}]
We provide a proof in the case $M = \mathbb T$ with only one noise term. Extending it to the case $M=\mathbb R$ and countably many noise terms is straightforward. Take the atlas $\{(U_1,\varphi_1),(U_2,\varphi_2)\}$ on $\mathbb T =\mathbb R/\mathbb Z,$ where $U_1 := (0,1)$, $\varphi_1:(0,1) \rightarrow U_1$ and $U_2 := (-\frac12,\frac12)$, $\varphi_2 : (0,1) \rightarrow U_2$.
Without loss of generality, assume that the shock $s(\cdot)$ starts at time $t=0$.

Now, consider a sequence $0 = \tau_0 < \tau_1 < \tau_2 < \ldots,$ with $\lim_{n \rightarrow \infty} \tau_n = \infty$ such that for all $n \in\mathbb{N}$, the curve $\gamma_n := \{s(t) : t \in [\tau_{n-1}, \tau_n)\}$ is contained in either one of the charts $U_1$ or $U_2$. For convenience, we denote by $(U_n, \varphi_n)$ to mean the chart $(U_1,\varphi_1)$ or $(U_2,\varphi_2)$ that contains $\gamma_n$.
In local coordinates, we split the domain $\Omega_n := [\tau_{n-1},\tau_n) \times \varphi_n^{-1}(U_n)$ into two regions (see figure \ref{RH-figure})
\begin{align}
&\Omega_-^n :=  \left\{(t, x) \in \mathbb [\tau_{n-1},\tau_n) \times (0,1) : x \in (0, s(t)) \right\}, \\
&\Omega_+^n :=  \left\{(t, x) \in \mathbb [\tau_{n-1},\tau_n) \times (0,1) : x \in (s(t), 1) \right\}.
\end{align}
For $n \in \mathbb N$, consider the following integrals
\begin{align}
I_n &= \iint_{\Omega_-^n} \left( \left(u \partial_t \varphi + \frac12 u^2 \partial_x \varphi \right) \diff t + u \partial_x \left(\varphi(t,x) \xi(x) \right) \circ \diff W_t \right) \diff x \nonumber \\
& =\iint_{\Omega_-^n} \text{div}_{x,t} \left(\frac12 \varphi u(t,x)^2, \, \varphi(t,x) u(t,x) \right) \diff x \diff t + \int_{\tau_{n-1}}^{\tau_n}\left(\int_0^{s(t)} \partial_x \left(\varphi(t,x) \xi(x) u(t,x) \right) \diff x\right) \circ \diff W_t \nonumber \\
& - \underbrace{\iint_{\Omega_-^n} \varphi \left(\diff u + u\partial_x u \diff t + \xi(x) \partial_x u \circ \diff{W_t} \right) \diff x}_{=0}, \text{ and}\nonumber
\end{align}
\begin{align}
J_n &= \iint_{\Omega_+^n} \left( \left(u \partial_t \varphi + \frac12 u^2 \partial_x \varphi \right) \diff t + u \partial_x \left(\varphi(t,x) \xi(x) \right) \circ \diff W_t \right) \diff x \nonumber \\
&= \iint_{\Omega_+^n} \text{div}_{x,t} \left(\frac12 \varphi u(t,x)^2, \, \varphi(x,t) u(t,x) \right) \diff x \diff t + \int_{\tau_{n-1}}^{\tau_n}\left(\int^1_{s(t)} \partial_x \left(\varphi(t,x) \xi(x) u(t,x) \right) \diff x\right) \circ \diff W_t \nonumber \\
& - \underbrace{\iint_{\Omega_+^n} \varphi \left(\diff u + u\partial_x u \diff t + \xi(x) \partial_x u \circ \diff{W_t} \right) \diff x}_{=0}. \nonumber
\end{align}

Then by Lemma \ref{shapiro}, we have
\begin{align}
I_n &= \oint_{\partial \Omega_-^n} \left(\frac12 \varphi u(t,x)^2 \diff t - \varphi(t,x) u(t,x) \diff x \right) + \int^{\tau_n}_{\tau_{n-1}} \varphi(t,s(t)) \xi(s(t)) u_-(t,s(t)) \circ \diff W_t \nonumber \\
&= -\int^{\tau_n}_{\tau_{n-1}} \varphi(t,s(t)) \left(u_-(t,s(t)) \diff s_t - \frac12 u_-(t,s(t))^2 \diff t - \xi(s(t)) u_-(t,s(t)) \circ \diff W_t\right) \nonumber \\
&+ \left(\int_A-\int_B-\int_C\right) \left(\frac12 \varphi u(t,x)^2 \diff t - \varphi(t,x) u(t,x) \diff x \right), \nonumber
\end{align}
where
\begin{align*}
A:=\{(\tau_{n-1},x) : x \in (0,s(\tau_{n-1}))\},\, B:=\{(\tau_{n},x) : x \in (0,s(\tau_{n}))\}, \, C:=\{(t,0) : t \in (\tau_{n-1},\tau_n)\},
\end{align*}
and
\begin{align}
J_n &= \oint_{\partial \Omega_+^n} \left(\frac12 \varphi u(t,x)^2 \diff t - \varphi(x,t) u(t,x) \diff x \right) - \int^{\tau_n}_{\tau_{n-1}} \varphi(t,s(t)) \xi(s(t)) u_+(t,s(t)) \circ \diff W_t \nonumber \\
&= \int_{\tau_{n-1}}^{\tau_n} \varphi(t,s(t)) \left(u_+(t,s(t)) \diff s_t - \frac12 u_+(t,s(t))^2 \diff t - \xi(s(t)) u_+(t,s(t)) \circ \diff W_t\right) \nonumber \\
&+ \left(\int_D+\int_E+\int_F\right) \left(\frac12 \varphi u(t,x)^2 \diff t - \varphi(t,x) u(t,x) \diff x \right), \nonumber
\end{align} where
\begin{align*}
D:=\{(\tau_{n-1},x) : x \in (s(\tau_{n-1}),1)\},\, E:=\{(\tau_{n},x) : x \in (s(\tau_{n}),1)\}, \, F:=\{(t,1) : t \in (\tau_{n-1},\tau_n)\}.
\end{align*}

One can check by direct calculation that
\begin{align*}
&\sum_{n=1}^N (I_n + J_n) = - \int_{\mathbb T} \varphi(\tau_N,x) u(\tau_N,x) \diff x \\
&+ \int_0^{\tau_N} \varphi(t,s(t)) \left[u_+(t,s(t)) - u_-(t,s(t)) \right] \left( \diff s_t - \frac12 \left[u_-(t,s(t)) + u_+(t,s(t)) \right] \diff t - \xi(s(t)) \circ \diff W_t \right),
\end{align*}
where we used the assumption that $\varphi(0,\cdot) \equiv 0$.

Now, from \eqref{stoch-burger-weak}, we have $\lim_{N\rightarrow \infty}\sum_{n=1}^N (I_n + J_n) = 0$ and since $\varphi$ has compact support, there exists $N > 0$ such that $\varphi(\tau_{N'},\cdot) \equiv 0$ for all $N' \geq N$. Hence,
\begin{align}
0 &= \lim_{N \rightarrow \infty}\sum_{n=1}^N (I_n + J_n) \nonumber \\
&= \int_0^{\infty} \varphi(t,s(t)) \left[u_+(t,s(t)) - u_-(t,s(t)) \right] \left( \diff s_t - \frac12 \left[u_-(t,s(t)) + u_+(t,s(t)) \right] \diff t - \xi(s(t)) \circ \diff W_t \right), \nonumber
\end{align}
and since $\varphi$ is arbitrary, we have
\begin{align}
\diff s_t = \frac12 \left[u_-(t,s(t)) + u_+(t,s(t)) \right] \diff t + \xi(s(t)) \circ \diff W_t, \nonumber
\end{align}
for all $t > 0$.

\begin{figure}[h]
    \centering
    \includegraphics[width=7cm]{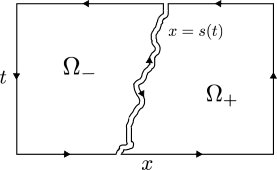}
    \caption{In the proof of the stochastic Rankine-Hugoniot condition \eqref{stoch-RH}, the domain $\Omega_n := [\tau_{n-1},\tau_n) \times (0,1) \subset [0,\infty) \times \mathbb T$ is split up into two parts: $\Omega_-^n$, which is on the left of the shock curve $(t,s(t))$ and $\Omega_+^n$, which is on the right.}
    \label{RH-figure}
\end{figure}

\end{proof}

\section{Well-posedness results}     \label{section4}
\subsection{Local well-posedness of a stochastic Burgers' equation} \label{localsection}

Now, we prove local well-posedness of the stochastic Burgers' equation \eqref{eq0} with one noise term and $\nu=0$. In fact, since the techniques used in the proof are essentially the same, we prove local well-posedness of a {\em more general equation}, which includes \eqref{eq0} as a special case. The stochastic Burgers' equation we treat is given by
\begin{equation}\label{main:eq:noise}
  \diff u + u\partial_{x}u  \diff t+ \mathcal{Q} u \diff W_{t}= \frac{1}{2} \mathcal{Q}^{2}u \diff  t.
\end{equation}
Here $\mathcal{Q}$ represents a first order differential operator \[ \mathcal{Q}u=a(x)\partial_{x}u+b(x)u, \]
where the coefficients $a(x),b(x)$ are smooth and bounded.
We state the main result of this section:
\begin{theorem}\label{mainth}
Let $u_{0}\in H^{s} (\mathbb{T})$ for some $s > 2$ fixed. Then there exists a pathwise unique $H^s$-maximal solution $(\tau_{max},u)$ of the 1D Burgers' equation \eqref{main:eq:noise} in the sense of Definition \ref{def:maximal} with initial datum $u_0$. %Therefore, if $(\tau',u')$ is another maximal solution, then necessarily %$\tau_{max}=\tau'$, $u=u'$ on %$[0,\tau_{max})$. 
Moreover, either $\tau_{max}=\infty$ or $\lim\sup_{t \rightarrow \tau_{max}} \norm{u(t)}_{H^{s}} = \infty,$ a.s.
\end{theorem}
We will provide a sketch of the proof, which follows closely the approach developed in \cite{stochBouss2018,stochEuler2017}. For clarity of exposition, let us divide the argument into several steps. \\ 
\begin{itemize}
\item {\textsl{Step 1: Uniqueness of local solutions.}  To show pathwise uniqueness of local solutions, one argues by contradiction. More concretely, one can prove that any two different solutions to (\ref{main:eq:noise}) defined up to a stopping time must coincide, as explained in the following proposition.

\begin{proposition}\label{prop:unique:local}
Let $s>2.$ Let $\tau:\Xi \rightarrow [0,\infty)$ be a stopping time, and $u^1,u^2: \Xi \times [0,\tau] \times \mathbb{T} \to \mathbb{R}$ be two $H^s$-local solutions of \eqref{main:eq:noise} with the same initial datum $u_{0} \in H^s(\mathbb{T})$. Then a.s. $u^1=u^2$ on $[0,\tau]$.
\end{proposition}

\begin{proof}
For this, we refer the reader to \cite{stochBouss2018,stochEuler2017}. It suffices to define $\bar{u} = u^1 - u^2,$ and perform standard estimates for the evolution of the $L^2$ norm of $\bar{u}$. \\
\end{proof}} 

\item{\textsl{Step 2: Existence and uniqueness of truncated maximal solutions.} Following the techniques in \cite{stochEuler2017} for the Euler equation with Lie transport noise, for $r>0$ we consider the truncated stochastic Burgers' equation
\begin{align}\label{truncated:burgers}
  \diff u_{r} + \theta_{r}(\norm{\partial_{x}u_r}_{L^\infty}) u_{r}\partial_{x}u_{r}  \diff t+ \mathcal{Q}u_{r} \diff W_{t}= \frac{1}{2} \mathcal{Q}^{2}u_{r} \diff  t,
\end{align}
where  $\theta_{r}:[0,\infty)\to [0,1]$ is a smooth function such that 
\begin{align*}
\theta_{r}(x)=
\begin{cases}
1, \quad |x| \leq r, \\
0, \quad |x| \geq 2r.
\end{cases}
\end{align*}
As we explain in the following lemma, local solutions of \eqref{main:eq:noise} can be constructed by restricting global solutions of \eqref{truncated:burgers} to a certain stopping time.

\begin{lemma}\label{globalimplieslocal}
Fix $r>0$ and $u_{0} \in H^{s}(\mathbb{T}),$ $s > 2$. Let $u_{r}:\Xi \times [0,\infty) \times \mathbb{T} \to \mathbb{R}$ be an $H^s$-global solution of \eqref{truncated:burgers} with initial datum $u_0$. Consider the stopping time
\begin{align*}
\tau_{r} = \inf \left\{t\geq 0 : \norm{u_r(t,\cdot)}_{H^{s}} \geq \dfrac{r}{C} \right\}, 
\end{align*}
where $C$ is chosen in such a way that the following inequality holds:
\begin{align*}
   \norm{\partial_x u_r}_{L^\infty} \leq C \norm{u_r}_{H^{s}}, 
\end{align*}  
which can be guaranteed thanks to the Sobolev embedding \eqref{Sob:ine2}. Then $(\tau_r,u_r)$ is a local solution to the stochastic Burgers' equation \eqref{main:eq:noise}.
\end{lemma}
\begin{proof}[Proof of Lemma \ref{globalimplieslocal}]
The proof is straightforward by construction.  For any $t\in[0,\tau_{r}],$ we have that
\begin{align*}
    & \norm{\partial_x u_r}_{L^\infty} \leq C \norm{u_r}_{H^{s}} \leq r,
\end{align*}  
and therefore $\theta_{r}(\norm{\partial_x u_r}_{L^\infty}) =1$.
\end{proof}  Let us state the result which is the {\em cornerstone} for proving existence and uniqueness of maximal local solutions of the stochastic Burgers' equation \eqref{main:eq:noise}.

\begin{proposition}\label{main:propo}
Given $r>0$ and $u_{0}\in H^{s}(\mathbb{T},\mathbb{R})$ for $s>2$, there exists a pathwise unique global $H^s$-solution $u$ of the truncated stochastic Burgers' equation \eqref{truncated:burgers}.
\end{proposition}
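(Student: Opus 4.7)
The plan is to construct a global $H^s$ solution of the truncated equation \eqref{truncated:burgers} via a vanishing viscosity approximation combined with energy estimates in $H^s$, and then invoke pathwise uniqueness to promote convergence in law to a strong solution. Concretely, for $\epsilon > 0$ I would consider the regularised SPDE
\begin{equation*}
\diff u_r^\epsilon + \theta_r(\|\partial_x u_r^\epsilon\|_\infty)\, u_r^\epsilon \partial_x u_r^\epsilon \,\diff t + \mathcal{Q} u_r^\epsilon \,\diff W_t = \tfrac{1}{2}\mathcal{Q}^2 u_r^\epsilon \,\diff t + \epsilon \partial_{xx} u_r^\epsilon \,\diff t,
\end{equation*}
which is a semilinear parabolic SPDE. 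Since $\theta_r$ restricts $\|\partial_x u_r^\epsilon\|_\infty$ to $[0,2r]$, the transport term is globally Lipschitz as a map $H^s\to H^{s-1}$ (using $H^{s-1}\hookrightarrow L^\infty$ for $s>3/2$), and $\mathcal{Q}$, $\mathcal{Q}^2$ are linear with smooth bounded coefficients. Hence standard variational theory for semilinear parabolic SPDEs (Pardoux/Rozovskii) yields a unique global $H^s$-valued solution $u_r^\epsilon$.

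Next I would derive $\epsilon$-uniform a priori bounds in $H^s$. Applying It\^o's formula to $\|\Lambda^s u_r^\epsilon\|_{L^2}^2$ generates three families of terms. The viscous contribution $-2\epsilon\|\Lambda^{s+1} u_r^\epsilon\|_{L^2}^2$ has a favourable sign and can be discarded. The transport term is estimated with the Kato--Ponce commutator inequality \eqref{katoponce}, splitting $\Lambda^s(u_r^\epsilon \partial_x u_r^\epsilon) = u_r^\epsilon \Lambda^s \partial_x u_r^\epsilon + [\Lambda^s, u_r^\epsilon]\partial_x u_r^\epsilon$ and using integration by parts together with the truncation, yielding $|\langle \Lambda^s(\theta_r u_r^\epsilon \partial_x u_r^\epsilon), \Lambda^s u_r^\epsilon\rangle| \lesssim r\|u_r^\epsilon\|_{H^s}^2$. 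The essential step is the top-order stochastic piece: the It\^o correction produces $\langle \Lambda^s \mathcal{Q}^2 u_r^\epsilon, \Lambda^s u_r^\epsilon\rangle$, while the quadratic variation contributes $\|\Lambda^s\mathcal{Q} u_r^\epsilon\|_{L^2}^2$; applying Theorem \ref{generalcancellations} with $\mathcal{P}=\Lambda^s$ yields exactly the cancellation $\langle \Lambda^s\mathcal{Q}^2 u_r^\epsilon, \Lambda^s u_r^\epsilon\rangle + \|\Lambda^s\mathcal{Q} u_r^\epsilon\|_{L^2}^2 \lesssim \|u_r^\epsilon\|_{H^s}^2$. Taking the supremum in time, estimating the martingale part with the Burkholder--Davis--Gundy inequality \eqref{BGD:ineq}, and closing by Gr\"onwall gives $\mathbb{E}\bigl[\sup_{t\in[0,T]}\|u_r^\epsilon(t)\|_{H^s}^2\bigr] \leq C(r,T,\|u_0\|_{H^s})$, uniformly in $\epsilon$.

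With uniform bounds in hand, I would pass to the limit $\epsilon\to 0$. Tightness of the laws of $u_r^\epsilon$ on $C([0,T]; H^{s-\delta}) \cap L^\infty(0,T; H^s)$ for $\delta>0$ small (so that $s-\delta>3/2$ still) follows from the uniform $H^s$ bound and a fractional-in-time equicontinuity estimate obtained directly from the equation in a negative-index space. Prokhorov and Skorokhod provide a subsequence converging almost surely on a new stochastic basis to a limit $u_r$; a standard identification argument (using that the truncated nonlinearity and $\mathcal{Q}$, $\mathcal{Q}^2$ pass to the limit in the topology of $C([0,T];H^{s-\delta})$) shows that $u_r$ is a martingale solution of \eqref{truncated:burgers}. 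Pathwise uniqueness for \eqref{truncated:burgers} is an immediate adaptation of Proposition \ref{prop:unique:local}, since the truncation renders the transport term globally Lipschitz in $L^2$; combined with the Gy\"ongy--Krylov lemma, this promotes convergence in law to convergence in probability on the original stochastic basis, producing the desired pathwise unique global $H^s$ solution $u_r$.

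The main obstacle is the top-order stochastic term: both $\langle\Lambda^s\mathcal{Q}^2 u, \Lambda^s u\rangle$ and $\|\Lambda^s\mathcal{Q} u\|_{L^2}^2$ individually contain contributions at the $H^{s+1}$ level that no Gr\"onwall argument could absorb, and it is only the precise structural cancellation provided by Theorem \ref{generalcancellations} that closes the energy estimate. A secondary subtlety is ensuring the $L^\infty$-truncation $\theta_r(\|\partial_x u_r^\epsilon\|_\infty)$ is preserved in the limit, which requires almost sure convergence in a topology where $\partial_x u_r^\epsilon$ converges uniformly; this is precisely why one works in $H^{s-\delta}$ with $s-\delta>3/2$ in the Skorokhod step.
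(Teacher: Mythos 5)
Your overall architecture (regularise with viscosity, close $\epsilon$-uniform $H^s$ estimates via Kato--Ponce and the cancellation of Theorem \ref{generalcancellations}, pass to the limit by tightness/Prokhorov/Skorokhod, and upgrade to a pathwise-unique solution via an adaptation of Proposition \ref{prop:unique:local} plus Gy\"ongy--Krylov) is the same as the paper's Steps 3--5, and the energy-estimate paragraph is essentially the paper's Step 5. The gap is in your very first step, the existence of the regularised solution $u_r^\epsilon$. The claim that the truncation makes $u\mapsto \theta_r(\|\partial_x u\|_\infty)\,u\partial_x u$ \emph{globally} Lipschitz from $H^s$ to $H^{s-1}$ is false: $\theta_r$ only controls $\|\partial_x u\|_\infty$, not $\|u\|_{H^s}$ (on $\mathbb T$ one can have $\|\partial_x u\|_\infty\leq r$ while $\|u\|_{H^s}\to\infty$, e.g.\ high-frequency oscillations of small amplitude), and the term is quadratic, so the Lipschitz constant on the difference $u\partial_x(u-v)$ grows with $\|u\|_{H^s}$. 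The map is only locally Lipschitz on bounded sets of $H^s$, so the appeal to off-the-shelf Pardoux--Rozovskii theory for a \emph{global} $H^s$ solution of the $\epsilon$-problem does not go through as stated. Moreover, with only second-order smoothing $\epsilon\partial_{xx}$ against a \emph{first-order} noise operator $\mathcal{Q}$, neither the mild route (the stochastic convolution estimate produces a non-integrable $(t-s)^{-1}$ singularity when you try to regain the derivative lost in $\mathcal{Q}u$) nor the variational route (you must exploit the $\mathcal{Q}^2$--$\mathcal{Q}$ cancellation at the $\Lambda^s$ level and handle the derivative loss in the drift) is a citation-level step; this borderline behaviour is precisely why the paper regularises with the high-order operator $\nu\partial_{xx}^{s'}$, $s'=2s+1$, in \eqref{hyper-reg}, so that the Duhamel fixed point closes directly in $L^2(\Xi;C([0,T];H^s))$ and even gains regularity for positive times.

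The fix is either to adopt the paper's hyper-regularisation (after which your limiting argument is exactly Step 4), or, if you insist on $\epsilon\partial_{xx}$, to construct $u_r^\epsilon$ at fixed $\epsilon$ by a Galerkin scheme whose coercivity comes from the same cancellation estimate \eqref{eq:cancellation2:thm} with $\mathcal{P}=\Lambda^s$, obtain a local solution from local Lipschitz continuity, and then extend it globally using the very a priori bound you derive in your second paragraph (which never needs global Lipschitzness, only the truncation and Kato--Ponce \eqref{katoponce}). With that step repaired, the rest of your proposal matches the paper's proof of Proposition \ref{main:propo}; minor differences (your tightness space $C([0,T];H^{s-\delta})\cap L^\infty(0,T;H^s)$ versus the paper's $L^2([0,T];H^\beta)\cap C_w([0,T];H^s)$, and your explicit Gy\"ongy--Krylov step, which the paper leaves to the cited references) are cosmetic. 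One small technical point: when closing the Gr\"onwall argument for $\mathbb{E}\bigl[\sup_t\|u\|_{H^s}^2\bigr]$, the Burkholder--Davis--Gundy bound of the martingale involves $\|u\|_{H^s}^4$ inside the quadratic variation, so you either absorb it by Cauchy--Schwarz and Young (splitting off $\tfrac12\mathbb{E}\sup_t\|u\|_{H^s}^2$) or work with fourth moments as the paper does in \eqref{est:local:final}--\eqref{est:martingale}; as written your one-line "close by Gr\"onwall" skips this.
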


It is very easy to check that once Proposition \ref{main:propo} is proven, Theorem \ref{mainth} follows immediately (cf. \cite{stochEuler2017}). Therefore, we focus our efforts on showing Proposition \ref{main:propo}.} \\

\item{\textsl{Step 3: Global existence of solutions of the hyper-regularised truncated stochastic Burgers' equation.} In order to show the global well-posedness of the truncated equation, we consider the following hyper-regularisation of \eqref{truncated:burgers}
\begin{align} 
\diff  u^{\nu}_{r} + \theta_{r} (\norm{\partial_{x} u^{\nu}_{r}}_{L^\infty})u^{\nu}_{r}\partial_{x}u^{\nu}_{r} \diff t+\mathcal{Q}u^{\nu}_{r} \diff W_{t} = \nu \partial^{s'}_{xx} u^{\nu}_{r} \diff t +\frac{1}{2} \mathcal{Q}^{2}u^{\nu}_{r} \diff t,  \label{hyper-regularised-eq}
\end{align}
where $\nu>0$ is a positive parameter and $s'= 2[s]+1$. Notice that we have added dissipation in order to be able to perform our computations rigorously. Equation \eqref{hyper-regularised-eq} is understood in the {\em mild sense}, i.e., as a solution to an integro-differential equation, which we discuss in the next step (see \eqref{mild-eq}).

\begin{proposition}\label{prop:global:hyper}
For every $\nu,r >0$ and initial datum $u_{0}\in H^{s}(\mathbb{T})$ for $s > 2$, there exists a pathwise unique global strong solution $u^{\nu}_{r}$ of (\ref{hyper-regularised-eq}) in the class $L^{2}(\Xi ; C([0,T]; H^{s}(\mathbb{T})))$, for all $T>0$. Moreover, its paths gain instant extra regularity $C([\delta,T]; H^{s+2}(\mathbb{T}) )$, for every $T>\delta>0$. 
\end{proposition}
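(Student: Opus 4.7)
The plan is the standard parabolic scheme. I would rewrite \eqref{hyper-regularised-eq} in its mild form
\begin{align*}
u^\nu_r(t) = S_\nu(t) u_0 & - \int_0^t S_\nu(t-\sigma)\, \theta_r(\|\partial_x u^\nu_r\|_\infty)\, u^\nu_r \partial_x u^\nu_r \diff\sigma \\
& + \frac{1}{2} \int_0^t S_\nu(t-\sigma)\, \mathcal{Q}^2 u^\nu_r \diff\sigma - \int_0^t S_\nu(t-\sigma)\, \mathcal{Q} u^\nu_r \diff W_\sigma,
\end{align*}
where $S_\nu(t)$ denotes the analytic semigroup generated by the hyper-dissipative operator $\nu \partial^{s'}_{xx}$. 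Since $s' = 2s+1 > 4$, this semigroup is strongly smoothing on every Sobolev scale, with
\begin{align*}
\|S_\nu(t) f\|_{H^{s+\alpha}} \lesssim t^{-\alpha/s'} \|f\|_{H^s}, \qquad \alpha \geq 0,\ t > 0
\end{align*}
(constants depending on $\nu$). The cutoff $\theta_r$ renders the nonlinearity globally Lipschitz from $H^s$ into $H^{s-1}$; the one-derivative loss in $\mathcal{Q}$ and the two-derivative loss in $\mathcal{Q}^2$ are absorbed by the integrable smoothing factors $t^{-1/s'}$ and $t^{-2/s'}$, the key point being that $2/s' < 1/2$, so the stochastic convolution is controlled in $L^2(\Xi)$ through \eqref{BGD:ineq}. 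Banach's fixed-point theorem in $L^2(\Xi; C([0,T]; H^s))$ then yields a unique local mild solution for small $T > 0$; moreover, rereading the mild formula with base point shifted to $t/2$ and using the smoothing estimate with $\alpha = 2$ delivers the instantaneous gain $u^\nu_r(t) \in H^{s+2}$ for $t > 0$.

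\textbf{Global existence via an energy estimate.} To upgrade the local solution to a global one, I would derive a $T$-uniform bound for $\mathbb{E}[\sup_{t\leq T} \|u^\nu_r(t)\|_{H^s}^2]$. Legitimised by the instantaneous $H^{s+2}$-regularity just obtained, Itô's formula applied to $\|\Lambda^s u^\nu_r\|_{L^2}^2$ gives
\begin{align*}
\diff \|\Lambda^s u^\nu_r\|_{L^2}^2 = & -2 \theta_r \langle \Lambda^s(u^\nu_r \partial_x u^\nu_r), \Lambda^s u^\nu_r\rangle \diff t + 2\nu \langle \Lambda^s \partial^{s'}_{xx} u^\nu_r, \Lambda^s u^\nu_r\rangle \diff t \\
& + \bigl(\langle \Lambda^s \mathcal{Q}^2 u^\nu_r, \Lambda^s u^\nu_r\rangle + \|\Lambda^s \mathcal{Q} u^\nu_r\|_{L^2}^2 \bigr) \diff t + \text{martingale}.
\end{align*}
The truncated transport term is bounded by the Kato-Ponce estimate \eqref{katoponce} combined with the cutoff by $C(r) \|u^\nu_r\|_{H^s}^2$, and the hyper-viscous contribution is non-positive and can be discarded. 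The main obstacle is the parenthesised pair of stochastic contributions: each summand individually contains $s+2$ derivatives of $u^\nu_r$ and thus cannot be bounded by $\|u^\nu_r\|_{H^s}^2$ alone. This is exactly where Theorem \ref{generalcancellations}, applied with $\mathcal{P} = \Lambda^s$, is essential: the principal-symbol cancellation (a reflection of the skew-symmetric Stratonovich structure that becomes the Itô drift $\frac12 \mathcal{Q}^2$) reduces their sum to a quantity $\lesssim \|u^\nu_r\|_{H^s}^2$. Taking expectations, using Burkholder-Davis-Gundy on the martingale after squaring and supremum-ising, and applying Gronwall, yields the required estimate. Global existence and uniqueness in $L^2(\Xi; C([0,T]; H^s))$ for every $T > 0$ then follow by the standard extension argument combined with Proposition \ref{prop:unique:local}.

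\textbf{Persistence of the extra regularity on $[\delta, T]$.} The gain of two derivatives shown locally extends to the full interval $[\delta, T]$ by applying the same smoothing argument to the mild equation shifted to base point $t_0 \in (0, \delta)$. The homogeneous contribution $S_\nu(t-t_0) u^\nu_r(t_0)$ lies in $H^{s+2}$ with norm $\lesssim (t-t_0)^{-2/s'} \|u^\nu_r(t_0)\|_{H^s}$, and the remaining Bochner integrals together with the stochastic convolution (handled via a Da Prato-Kwapień-Zabczyk factorisation, available since $2/s' < 1/2$) admit continuous $H^{s+2}$-valued versions on $[\delta, T]$. Combined with the global $H^s$-bound of the previous step, this produces pathwise regularity $u^\nu_r \in C([\delta, T]; H^{s+2})$, completing the proof.
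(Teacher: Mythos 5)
Your proposal follows essentially the same route as the paper: the mild (Duhamel) formulation with the hyper-dissipative semigroup, a Picard/Banach fixed point in $L^2(\Xi;C([0,T];H^{s}))$, the global-in-time a priori $H^{s}$ bound closed via Kato--Ponce, the cancellation estimates of Theorem \ref{generalcancellations}, the Burkholder--Davis--Gundy inequality and Gr\"onwall, and semigroup smoothing for the $C([\delta,T];H^{s+2})$ regularity. The only organisational difference is that the paper defers the energy estimate to its Step 5 (where it also serves the tightness hypotheses), whereas you fold it directly into the globalisation step; the mathematical content is the same.
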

\begin{proof}[Proof of Proposition \ref{prop:global:hyper}]
The proof is based on a classical fixed point iteration argument which employs Duhamel's principle (see \cite{stochEuler2017}).  We omit the subscripts $\nu$ and $r$ throughout the proof for simplicity, but it should be kept in mind that our functions depend on those parameters. Given $u_{0}\in H^{s}(\mathbb{T})$, consider the mild formulation of the hyper-regularised truncated equation \eqref{hyper-regularised-eq}
\begin{align}
    u(t)= (\Upsilon u)(t), \label{mild-eq}
\end{align}
where
% \label{mildeq}
\begin{align*} 
& (\Upsilon u)(t)= e^{tA}u_{0} -\int
_{0}^{t}e^{(  t-s)  A}W_{\theta} u (s)
\diff  s \\
& \hspace{100pt} +\int_{0}^{t}e^{(  t-s)  A} Lu(s) \diff  s - \int_{0}^{t}e^{(  t-s)  A}Ru(s) \diff W_{s},
\end{align*}
$t>0$ and we have employed the notation $A=\nu \partial^{s'}_{xx},$ $W_{\theta}u=\theta_{r} (\norm{\partial_{x} u}_{L^\infty}) u\partial_{x}u,$
$Lu=\dfrac{1}{2}\mathcal{Q}^{2}u,$ and $Ru= \mathcal{Q}u$. Define the space $\mathcal{W}_{T}=L^{2}(\Xi; C([0,T];H^{s}(\mathbb{T})))$.  One can show that $\Upsilon$ is a contraction on $\mathcal{W}_{T}$ by following classical arguments as in \cite{stochEuler2017}. Therefore, by applying Picard's iteration, a local solution can be constructed. To extend it to a global one, it is sufficient to show that for any given $T>0$ and initial datum $u_{0}\in H^{s}(\mathbb{T})$, $s > 2,$ the following bound is available 
\begin{align}\label{eq:assertion}
\mathbb{E}\left[ \sup_{t\in[0,T]} \norm{u(t)}^{2}_{H^{s}} \right] \leq C(T),
\end{align}
for a finite constant $C(T) < \infty,$ so that one can patch together each local solution to cover any finite time interval $[0,T]$. We will prove estimate \eqref{eq:assertion} further below. Furthermore, by standard properties of the semigroup $e^{tA}$ (cf. \cite{Goldstein85}), one can prove that for positive times $T>\delta>0$, each term in the mild equation \eqref{mild-eq} enjoys higher regularity, namely, $u\in L^{2}(\Xi; C([\delta,T];H^{s+2}(\mathbb{T})))$. All the computations are omitted and can be carried out easily by mimicking the same ideas as in \cite{stochEuler2017,stochBouss2018}.  
\end{proof}}

\item{\textsl{Step 4: Limiting and compactness argument.} The main objective of this step is to show that the family of solutions $\{u^{\nu}_{r} \}_{\nu>0}$ of the hyper-regularised stochastic Burgers' equation \eqref{hyper-regularised-eq} is {\em compact} in a particular sense and therefore we are able to extract a subsequence converging strongly to a solution of the truncated stochastic Burgers' equation \eqref{truncated:burgers} in a convenient space. The central idea for proving this is to show compactness
of the probability laws of this family. Consequently, we demonstrate that these laws are {\em tight} in a suitable metric space.  Let $T>0$ and define the Polish space $E$ by
\begin{equation}\label{polish}
E = C([0,T];H^{\beta}(\mathbb{T})), \quad \beta \geq 2.
\end{equation}
Assume that the laws of $\{u^\nu_r\}_{\nu>0}$ are tight in $E.$ Once this is proven, one only needs to invoke standard stochastic partial differential equations arguments based on the Skorokhod's representation and Prokhorov's theorem to conclude that there exists a subsequence of $\{u^\nu_r\}_{\nu>0}$ such that solutions of equation \eqref{hyper-regularised-eq} converge to solutions of \eqref{truncated:burgers} in the weak limit in the Polish space $E$ \eqref{polish}. A more thorough approach can be found in \cite{stochEuler2017,GlattVicol2014}. In the next proposition, we present the main argument to show that the sequence of laws are indeed tight.

\begin{proposition} \label{tightness}
Assume that for some $\alpha >0,$ $M \in \mathbb{N},$ there exist  constants $C_1(T)$ and $C_2(T)$ such that
\begin{align}
\mathbb{E} \left [ \sup_{t \in [0,T]} \norm{u_r^\nu (t)}^4_{H^{s}} \right ] \leq C_1(T), \label{tight1}
\end{align}
\begin{align}
\mathbb{E} \left [ \int_0^T \int_0^T \frac{\norm{u_r^\nu (t) - u_r^\nu(s)}^4_{H^{-M}}}{|t-s|^{1+4\alpha}} \diff t \diff s \right ]  \leq C_2(T), \label{tight2}
\end{align}
uniformly in $\nu$. Then the sequence $\{u_{r}^{\nu}\}_{\nu>0}$ is tight in $E$.
\end{proposition}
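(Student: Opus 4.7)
The strategy is to verify Prokhorov's tightness criterion: for every $\varepsilon > 0$, I will exhibit a compact set $K_\varepsilon \subset E$ such that $\mathbb{P}(u_r^\nu \in K_\varepsilon) \geq 1 - \varepsilon$ uniformly in $\nu > 0$. The natural candidates are the balls
\[
K_R := \left\{u : \|u\|_{L^\infty([0,T]; H^s)} \leq R,\ \|u\|_{W^{\alpha,2}([0,T]; H^{-N})} \leq R\right\},
\]
where the second is the Sobolev--Slobodeckij norm whose square is precisely the double integral appearing in \eqref{tight2}.

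The argument then splits into two parts, one for each factor of $E$. For the $L^2([0,T]; H^\beta)$ component, I would apply the Aubin--Lions--Simon compactness lemma, which yields that the embedding
\[
L^2([0,T]; H^s) \cap W^{\alpha,2}([0,T]; H^{-N}) \hookrightarrow L^2([0,T]; H^\beta)
\]
is compact whenever $\beta < s$; since $K_R$ is bounded in the left-hand space (the $L^\infty H^s$ control dominates the $L^2 H^s$ norm up to a factor $\sqrt{T}$), relative compactness of $K_R$ in $L^2([0,T]; H^\beta)$ follows. For the $C_w([0,T]; H^s)$ component, I would invoke the classical Strauss--Lions criterion: a subset of $L^\infty([0,T]; H^s)$ that is equicontinuous as a family of maps $[0,T] \to H^{-N}$ is relatively compact in $C_w([0,T]; H^s)$. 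Equicontinuity in $H^{-N}$ follows from the Sobolev embedding $W^{\alpha,2}([0,T]; H^{-N}) \hookrightarrow C^{\alpha - 1/2}([0,T]; H^{-N})$ when $\alpha > 1/2$, and otherwise from an interpolation between the $H^s$ and $H^{-N}$ bounds combined with the uniform boundedness in $L^\infty H^s$.

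Once $K_R$ is shown to be compact in $E$, Markov's inequality applied to \eqref{tight1} and \eqref{tight2} gives
\[
\mathbb{P}(u_r^\nu \notin K_R) \leq \frac{C_1(T) + C_2(T)}{R^2},
\]
which can be made smaller than any $\varepsilon > 0$ by taking $R$ large, uniformly in $\nu$. The main obstacle is the compactness of $K_R$ in the hybrid space $C_w([0,T]; H^s)$: the passage from the integral bound \eqref{tight2} to a genuine modulus of continuity into $H^{-N}$, and then to weak continuity into $H^s$, requires the Strauss--Lions machinery or an equivalent interpolation argument, and in the borderline regime $\alpha \leq 1/2$ one must combine the fractional time estimate with the $L^\infty H^s$ bound carefully. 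The remaining steps (verifying Aubin--Lions in this concrete setting and writing out the Markov estimate) are essentially routine.
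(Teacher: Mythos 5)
Your overall architecture---Markov's inequality applied to \eqref{tight1}--\eqref{tight2} together with deterministic compactness of sets bounded in $L^\infty([0,T];H^{s})\cap W^{\alpha,2}([0,T];H^{-N})$---is exactly the route the paper intends (its ``proof'' is only a pointer to \cite{stochEuler2017,stochBouss2018}, which rely on the Flandoli--Gatarek compactness lemmas), and your treatment of the $L^{2}([0,T];H^{\beta})$ factor via the fractional Aubin--Lions lemma is fine for any $\alpha>0$ and $\beta<s$, as is the Markov/Chebyshev step. The genuine gap is in the $C_{w}([0,T];H^{s})$ factor in the regime $\alpha\le 1/2$, which you flag as ``the main obstacle'' but do not actually resolve: interpolation between the $H^{s}$ and $H^{-N}$ bounds cannot produce the equicontinuity into $H^{-N}$ that the Strauss--Lions criterion requires. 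Interpolating the increments only gives a bound in $W^{\alpha\theta,2/\theta}([0,T];H^{\sigma})$ with $(\alpha\theta)\cdot(2/\theta)=2\alpha\le 1$, still below the threshold for a modulus of continuity, and in fact no argument using only these two norms can work: take $u_{n}(t)=\phi$ for $t\le 1/2$, $u_{n}(t)=\psi\neq\phi$ for $t\ge 1/2+1/n$, interpolated linearly in between, with $\phi,\psi\in H^{s}$ fixed. Each $u_{n}$ lies in $C([0,T];H^{s})$, the family is bounded uniformly in $L^{\infty}([0,T];H^{s})$ and in $W^{\alpha,2}([0,T];H^{-N})$ for every $\alpha<1/2$, yet no subsequence converges in $C_{w}([0,T];H^{s})$, since the pointwise limit is a step function which is not weakly continuous. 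Hence your sets $K_{R}$ are not relatively compact in $E$ when $\alpha\le 1/2$, and this is precisely the regime in which the paper verifies \eqref{tight2} (Step~5 establishes it only for $0<\alpha<1/2$).

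To close this you need input beyond \eqref{tight1}--\eqref{tight2} with exponent $p=2$. One standard fix is to estimate higher moments of the time increments (Burkholder--Davis--Gundy with a large exponent $p$) so as to bound $\mathbb{E}\,\|u^{\nu}_{r}\|^{p}_{W^{\alpha,p}([0,T];H^{-N})}$ with $\alpha p>1$; then $W^{\alpha,p}([0,T];H^{-N})\hookrightarrow C^{0,\alpha-1/p}([0,T];H^{-N})$ yields genuine equicontinuity, and the Strauss-type criterion, combined with the $L^{\infty}H^{s}$ bound, gives compactness in $C_{w}([0,T];H^{s})$. Alternatively, one can replace the deterministic-compact-set argument for the $C_{w}$ component by an Aldous-condition (or Kolmogorov-type) tightness criterion applied to $t\mapsto\langle u^{\nu}_{r}(t),\phi_{i}\rangle$ for a countable dense family of smooth test functions. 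With $\alpha>1/2$ your proof is correct as written; without one of these additions, the case the paper actually uses is not covered.
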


\begin{proof}[Proof of Proposition \ref{tightness}]
We employ the following lemma, which can be found in \cite{stochEuler2017}, was originally proved in \cite{simon}, and constitutes a variation of the classical Aubin-Lions Lemma.
\begin{lemma}\label{compactlemma}
Suppose that $X, Y, Z$  are separable Hilbert spaces with continuous dense embedding $X\hookrightarrow Y\hookrightarrow Z$ such that there exists $\theta \in (0,1)$ and $M >0$ verifying 
\begin{align*}
    \norm{v}_Y \leq M \norm{v}_X^{1-\theta} \norm{v}_Z^{\theta},
\end{align*}
for all $v \in X.$ Assume that $X \hookrightarrow Y$ is a compact embedding. Let $\alpha>0.$ Then 
\begin{align*}
L^{\infty}([0,T]; X) \cap W^{\alpha,4}([0,T]; Z)\hookrightarrow C([0,T]; Y) 
\end{align*}
is a compact embedding.
\end{lemma}
In Lemma \ref{compactlemma} we select
\begin{align*} 
X=H^{s}(\mathbb{T}), \quad Y=H^{\beta}(\mathbb{T}), \quad Z= H^{-M}(\mathbb{T}),
\end{align*}
where $\beta \geq 2$ as specified before and we impose the extra condition $\beta < s$ so that the embedding of $X$ into $Y$ is compact. We also choose $\alpha \in (0,1)$. Therefore we obtain that $L^\infty([0,T]; H^{s}(\mathbb{T})) \cap W^{\alpha,4}([0,T]; H^{-M}(\mathbb{T}))$ is compactly embedded in $E$. By hypotheses \eqref{tight1}-\eqref{tight2} \cite{stochEuler2017}, the family of laws of $\{u^{\nu}_{r}\}_{\nu>0}$ is $\mathbb{P}$-a.s. supported on the space 
\begin{align*}
E_0 = L^\infty([0,T]; H^{s}(\mathbb{T})) \cap W^{\alpha,4}([0,T]; H^{-M}(\mathbb{T})),
\end{align*}
and it suffices to prove that this family is tight in $E_0$. For $A_1,A_2,A_3>0,$ define the set
\begin{align*}
& B = \bigg\{ f:[0,T] \times \mathbb{T} \rightarrow \mathbb R \hspace{0.2cm}: \sup_{t \in [0,T]} \norm{f(t)}_{H^{s}}^4  \leq A_1, \int_0^T \norm{f(t)}_{H^{-M}}^4 \diff t \leq A_2, \\ 
& \hspace{170pt} \int_0^T \int_0^T \frac{\norm{f(t)-f(s)}^4_{H^{-M}}}{|t-s|^{1+4\alpha}} \diff t \diff s \leq A_3  \bigg\},
\end{align*}
which is relatively compact in $E_0$. It is enough to show that for every $\epsilon,$ there exist $A_{1},A_2, A_{3}>0$ such that $ \mathbb{P} (u_r^\nu \in B^{c}) \leq \epsilon.$ Fix $\epsilon>0.$ Invoking Markov's inequality and taking into account hypothesis \eqref{tight1}, we have that
\begin{align*}
\mathbb{P} \left( \sup_{t \in [0,T]} \norm{u_r^\nu(t)}^4_{H^{s}} > A_1 \right) \leq  \frac{\mathbb{E} \left [ \sup_{t \in [0,T]} \norm{u_{r}^\nu (t)}^4_{H^{s}} \right] }{A_1} \leq \frac{C_1(T)}{A_1},
\end{align*}
and this is smaller than $\epsilon/3$ if we choose $A_{1}$ sufficiently large. 
Moreover, since $\norm{f}_{H^{-M}} \lesssim \norm{f}_{H^{s}},$ 
\begin{align*}
\mathbb{P} \left( \int_0^T \norm{u_r^\nu(t)}^4_{H^{-M}} \diff t > A_2  \right) \leq  \mathbb{P} \left( T \sup_{t\in[0,T]} \norm{u_r^\nu(t)}^4_{H^{-M}} > A_2  \right) \\ 
\lesssim   \mathbb{P} \left( T \sup_{t\in[0,T]} \norm{u_r^\nu(t)}^4_{H^{s}} > A_2  \right)\leq \frac{C_1(T) T}{A_{2}},
\end{align*}
which can also be made arbitrarily small. A similar argument applies to the set
\begin{align*}
    \mathbb{P} \left( \int_0^T \int_0^T \frac{\norm{u_r^\nu(t)-u_r^\nu(s)}_{H^{-M}}^4}{|t-s|^{1+4\alpha}} \diff t \diff s > A_3 \right),
\end{align*} 
by using hypothesis \eqref{tight2}. Hence we conclude that $\mathbb{P} (u_r^\nu \in B^{c}) \leq \epsilon$ if we choose $A_1,A_2,A_3$ large enough, as desired.
\end{proof}}

\item{\textsl{Step 5: Hypothesis estimates.}}
We are left to show that hypothesis \eqref{tight1}-\eqref{tight2} hold.  First, we prove that condition \eqref{tight1} implies condition \eqref{tight2}. By following the techniques in \cite{stochEuler2017} and using equation \eqref{hyper-regularised-eq}, one can obtain
\begin{align*}
& \mathbb{E} \left[ \norm{u^{\nu}_{r}(t)-u^{\nu}_{r}(s)}^4_{H^{-M}} \right]
\lesssim  (t-s)^3 \int_s^t \mathbb{E} \left[\theta_{r} (\norm{\partial_{x} u^{\nu}_r}_{L^\infty})^4\norm{u^{\nu}_{r}\partial_{x}u^{\nu}_{r}}^4_{H^{-M}}\right] \diff \gamma \\ 
& + (t-s)^3 \int_s^t \mathbb{E} \left[ \nu \norm{ \partial_{xx}^{s'} u^{\nu}_{r} }^4_{H^{-M}} \right] \diff \gamma + (t-s)^3 \int_s^t \mathbb{E} \left[ \norm{ \mathcal{Q}^{2}u^{\nu}_{r} }^4_{H^{-1}} \right] \diff \gamma \\
& \hspace{150pt} + \mathbb{E} \left[ \left| \left| \int_s^t \mathcal{Q} u^{\nu}_{r} \diff W_{\gamma} \right| \right|^4_{L^2} \right],
\end{align*}
for $0\leq s < t \leq T.$ It is easy to infer that
\begin{align*}
 \int_s^t \mathbb{E} \left[\theta_{r}(\norm{\partial_{x}u_r^\nu}_{L^\infty})^4 \norm{u^{\nu}_{r}\partial_{x}u^{\nu}_{r}}^{4}_{H^{-M}}\right] \diff \gamma \lesssim  \int_{s}^{t} \mathbb{E}\left[\norm{u^{\nu}_{r}}^{4}_{H^{s}}\right] \diff \gamma \leq A_1(t-s),
\end{align*}
since
\begin{align*}
\norm{u^{\nu}_{r}\partial_{x}u^{\nu}_{r}}_{H^{-M}}  \lesssim \norm{\partial_{x}u_r^\nu}_{L^\infty} \norm{u_r^\nu}_{H^{s}},
\end{align*}
where we have taken into account hypothesis \eqref{tight1}. In a similar way, one can check that for $M=3[s']+2$, 
%$$2(2[s]+1)-M = s$$
\begin{align*}
\int_s^t \mathbb{E} \left[\norm{\nu \partial^{s'}_{xx} u^{\nu}_{r} }^4_{H^{-M}} \right] \diff \gamma \lesssim  \int_s^t \mathbb{E}\left[\norm{u_r^\nu }^{4}_{H^s}\right] \diff \gamma \leq A_2(t-s),
\end{align*}
since 
\begin{align*} \norm{\partial^{s'}_{xx} u_r^\nu}_{H^{-M}}  \lesssim  \norm{u_r^\nu}_{H^s}. 
\end{align*}
By similar techniques, we have 
\begin{align*}
\int_{s}^{t}\mathbb{E}\left[ \norm{\mathcal{Q}^{2}u^{\nu}_{r} }^4_{H^{-1}}\right] \diff \gamma \leq A_3(t-s),
\end{align*}
and (take into account Burkholder-Davis-Gundy as in \cite{stochEuler2017})
\begin{align*}
\mathbb{E} \left[ \left| \left| \int_s^t \mathcal{Q}u^{\nu}_{r} \diff W_{\gamma} \right| \right|^4_{L^2} \right] \leq A_4(t-s)^2.
\end{align*}
Combining the above estimates, we deduce that
\begin{align*} \mathbb{E} \left[ \norm{u^{\nu}_{r}(t)-u^{\nu}_{r}(s)}^4_{H^{-M}} \right] \leq A_5(T) (t-s)^2.\end{align*}
Hence for $0<\alpha < 1/2,$ 
\begin{align*}
\mathbb{E} \left[ \int_0^T \int_0^T \frac{\norm{u_r^\nu (t) - u_r^\nu(s)}^4_{H^{-M}}}{|t-s|^{1+4\alpha}} \diff t \diff s \right] 
&\leq  \mathbb{E} \left[ \int_0^T \int_0^T \frac{A_5(T)}{|t-s|^{4 \alpha-1}} \diff t \diff s \right] \\ &\leq C_1(T).
\end{align*}
We are left to prove that  hypothesis \eqref{tight1} holds, i.e.
\begin{align}\label{est:apriori:full}
\mathbb{E} \left [ \sup_{t \in [0,T]} \norm{u_r^\nu (t)}^4_{H^{s}} \right ] \leq C_2(T).
\end{align}
This is the most difficult part of the hypotheses estimates. We compute the evolution of the $\dot{H}^s$-norm of $u_r^\nu$. Indeed, the equation for the $L^{2}$-norm of $\Lambda^{s}u_r^\nu$ in \eqref{hyper-regularised-eq} is
\begin{align} \label{hacheese}
& \frac{1}{2} |\Lambda^{s} u^{\nu}_{r} (t)|^2_{L^2} = \frac{1}{2} |\Lambda^{s} u^{\nu}_{r} (0)|^2_{L^2} \nonumber \\ 
&\hspace{50pt} - \int_0^t \langle \theta_{r}(\norm{\partial_{x} u_r^\nu }_{L^\infty}) \Lambda^{s} (u^{\nu}_{r}\partial_{x}u^{\nu}_{r} ), \Lambda^{s} u^\nu_r \rangle_{L^2} \diff \gamma \nonumber \\ 
&\hspace{50pt} - \int_0^t \langle \Lambda^{s} \mathcal{Q} u^{\nu}_{r}, \Lambda^{s} u_r^\nu \rangle_{L^2} \diff W_{\gamma} 
 +  \int_0^t \langle \nu \Lambda^{s} \partial^{s'}_{xx} u^{\nu}_{r}, \Lambda^{s} u^\nu_r \rangle_{L^2} \diff \gamma \nonumber  \\ 
& \hspace{50pt} + \frac{1}{2} \int_0^t  \langle \Lambda^{s} \mathcal{Q}^{2} u^{\nu}_{r}, \Lambda^{s} u^\nu_r \rangle_{L^2} \diff \gamma  +\frac{1}{2}  \int_0^t \langle \Lambda^{s} \mathcal{Q} u^\nu_r, \Lambda^{s} \mathcal{Q}u^\nu_r \rangle_{L^2} \diff \gamma,
\end{align}
for $t \in [0,T].$ The nonlinear term can be estimated as
\begin{align*}
&  \int_{\mathbb{T}}  \Lambda^{s} (u^{\nu}_{r}\partial_{x}u^{\nu}_{r})\Lambda^{s} u^\nu_r \diff V  =  \int_{\mathbb{T}}  (u^{\nu}_{r} \Lambda^{s}  \partial_{x} u^{\nu}_{r})\Lambda^{s} u^\nu_r \diff V  + \int_{\mathbb{T}}  [\Lambda^{s},  u^{\nu}_{r} ] \partial_x u^\nu_r \Lambda^{s} u^\nu_r \diff V   \nonumber \\
& \hspace{250pt} = I_1 + I_2.  
\end{align*}
Integrating by parts, we note that $I_1$ can be rewritten and bounded as
\begin{align*}
    |I_1| = \frac{1}{2}\left| \int_{\mathbb{T}}  \partial_{x}  u^{\nu}_{r}  (\Lambda^{s} u^{\nu}_{r})^2 \diff V \right| \leq \norm{\partial_x u^{\nu}_{r}}_{L^\infty} \norm{\Lambda^s u^{\nu}_{r}}_{L^2}^2.
\end{align*}
$I_2$ can be estimated via the Kato-Ponce commutator estimate \eqref{katoponce}
\begin{align*}
    &|I_2| \leq  \norm{[\Lambda^{s},  u^{\nu}_{r} ] \partial_x u^{\nu}_{r}}_{L^2} \norm{\Lambda^{s} u^\nu_r}_{L^2} \\ 
    & \lesssim ( \norm{\partial_x u^{\nu}_{r}}_{L^\infty} \norm{\Lambda^{s-1} \partial_x u^{\nu}_{r}}_{L^2} + \norm{\Lambda^s u^{\nu}_{r}}_{L^2}   \norm{\partial_x u^{\nu}_{r}}_{L^\infty})  \norm{\Lambda^{s} u^\nu_r}_{L^2} \\
    & \hspace{150pt} \lesssim \norm{\partial_x u^{\nu}_{r}}_{L^\infty} \norm{\Lambda^s u^{\nu}_{r}}_{L^2}^2.
\end{align*}
Putting together the estimates for $I_1$ and $I_2,$ we get
\begin{align*} 
    \left| \int_{\mathbb{T}}  \Lambda^{s} (u^{\nu}_{r}\partial_{x}u^{\nu}_{r})\Lambda^{s} u^\nu_r \diff V \right| \lesssim \norm{\partial_x u^{\nu}_{r}}_{L^\infty} \norm{\Lambda^s u^{\nu}_{r}}_{L^2}^2.
\end{align*}
We note that the dissipative term
\begin{align*} \langle \nu \Lambda^{s} \partial^{s'}_{xx} u^{\nu}_{r}, \Lambda^{s} u^\nu_r \rangle_{L^2}= -\nu \norm{\Lambda^{s+2[s]+1}u^{\nu}_{r}}^{2}_{L^{2}} \leq 0 
\end{align*}
has the ``good" sign so it can be dropped. The last two terms in \eqref{hacheese} can be treated in a similar fashion by taking into account estimate \eqref{eq:cancellation2:thm}. The previous techniques also serve to treat the evolution of $\norm{u^\nu_r }_{L^2}$ (this is a much simpler case).\\ \\
Denote
\begin{align*}
M_t = \int_0^t ( \langle \mathcal{Q} u^{\nu}_{r},  u_r^\nu \rangle_{L^2} + \langle \Lambda^{s} \mathcal{Q} u^{\nu}_{r}, \Lambda^{s} u_r^\nu \rangle_{L^2} ) \diff W_\gamma, \quad t \in [0,T].
\end{align*}
We drop the parameter dependence on $u_{r}^{\nu}$ to make the notation simpler. By putting together the previous estimates, we have
\begin{align*} \norm{u(t)}^{2}_{H^{s}} \lesssim \norm{u(0)}^{2}_{H^{s}} +  |M_t| + \int_0^t \norm{u}^{2}_{H^{s}} \diff \gamma,
\end{align*}
where the constant in the last inequality depends on $r$. Integrating the above expression against $\exp(t)$ and squaring as in \cite{stochEuler2017}, we obtain
\begin{align*}
\norm{u(t)}_{H^{s}}^4  \lesssim \text{exp} (t) ( \norm{u(0)}^4_{H^{s}}+ |M_t|^2 ). 
\end{align*}
By applying supremum and expectation on both sides of the equation above, we have
\begin{align}\label{est:local:final}
\mathbb{E}  \left[ \sup_{\gamma \in [0,t]}\norm{u(\gamma)}_{H^{s}}^4 \right] \lesssim \text{exp} (t) \left( \norm{u(0)}^4_{H^{s}}+ \mathbb{E} \left[ \sup_{\gamma \in [0,t]} |M_\gamma|^2 \right]\right),
\end{align}
where we remind that $t \in [0,T].$ We apply Burkholder-Davis-Gundy inequality \eqref{BGD:ineq} in order to obtain the bound
\begin{align}\label{est:burk-dav-gun}
\mathbb{E} \left[ \sup_{\gamma \in [0,t]} |M_{\gamma}|^{2} \right] \lesssim  \mathbb{E} [[M]_{t}], 
\end{align}
where
\begin{align*}
    [M]_t = \int_0^t ( \langle \mathcal{Q} u,  u \rangle_{L^2} + \langle \Lambda^{s} \mathcal{Q} u, \Lambda^{s} u \rangle_{L^2} )^2 \diff \gamma .
\end{align*}
Integrating by parts, we can derive
\begin{align}\label{est:easypart:cancellations}
|\langle \Lambda^{s}  \mathcal{Q}u, \Lambda^{s} u \rangle_{L^2} |
\lesssim  \norm{u}^{2}_{H^{s}},
\end{align}
as in the proof of \eqref{eq:cancellation2:thm} in \cite{stochBouss2018} or in the appendix of \cite{stochBouss2018}. The constant in the last inequality depends on the $W^{s+1,\infty}$-norm of the coefficients. Therefore,
\begin{align}\label{est:martingale}
\mathbb{E} [[M]_t] \lesssim \int_0^t \mathbb{E} \left[ \sup_{r \in [0,t] } \norm{ u (r) }^{4}_{H^{s}}\right] \diff \gamma.
\end{align}
Hence, combining \eqref{est:local:final}-\eqref{est:martingale}, together with application of Gr\"{o}nwall's inequality yields 
\begin{align*}
\mathbb{E} \left [ \sup_{t \in [0,T]} \norm{u (t)}^4_{H^{s}} \right ] \leq C_2(T).
\end{align*}
\end{itemize}

\subsection{Blow-up criterion}
We are now interested in deriving a blow-up criterion for the stochastic Burgers' equation (\ref{eq0}) with $\nu=0$. First of all, we note that for the deterministic Burgers' equation
\begin{align} \label{burgerdet}
u_t + u \partial_x u = 0,
\end{align}
there is a well-known blow-up criterion available. In the deterministic case, the following theorem of local existence and uniqueness of strong solutions holds.
\begin{theorem}
Let $u_0 \in H^s(\mathbb{T})$ with $s > 3/2.$ Then there exists $T>0$ and $u \in C([0,T];H^s(\mathbb{T})) \cap C^1([0,T];H^{s-1}(\mathbb{T}))$ solving equation \eqref{burgerdet}. Moreover, if $v$ is another function with the same properties, necessarily $u=v.$ 
\end{theorem}
The above result is classic and numerous proofs are available in the literature (see \cite{KisNazSht2008} for a related cutting edge result and the references therein). The blow-up criterion for equation \eqref{burgerdet} is the following.
\begin{theorem}[Blow-up criterion for deterministic Burgers'] \label{blowthe}
Assume that $u_0 \in H^s(\mathbb{T}),$ $s >3/2,$ $T^*>0,$ and $u:[0,T^*) \times \mathbb{T} \rightarrow \mathbb R$ is a local solution of \eqref{burgerdet}. Then the following statements are equivalent
\begin{itemize}
    \item $\lim_{t \rightarrow T^*} \norm{u(t,\cdot)}_{H^s} = \infty.$
    \item $\int_{0}^{T^{\star}} \norm{\partial_x u(t,\cdot)}_{L^\infty} \diff t = \infty.$
\end{itemize}

\end{theorem}
In the rest of this subsection, we focus on proving the following stochastic version of Theorem \ref{blowthe}.
\begin{theorem}[Blow-up criterion for stochastic Burgers'] \label{blowup}
Assume that $u_0 \in H^s(\mathbb{T}),$ $s > 2,$ and $u:\Xi \times [0,\tau_{max}) \times \mathbb{T} \rightarrow \mathbb R$ is a maximal solution of \eqref{main:eq:noise}. If $\tau_{max}<\infty,$ then
$\int_0^{\tau_{max}}  \norm{\partial_x u(t, \cdot)}_{L^\infty }\diff t=\infty$ a.s. Moreover, if $0\leq\tau<\tau_{max}$ is a smaller stopping time, necessarily $\int_0^{\tau}  \norm{\partial_x u(t, \cdot)}_{L^\infty} \diff t<\infty$ a.s.
\end{theorem}
%\begin{remark}
%From the last theorem, we can conclude $\int_{0}^{{\tau_{max}}} \norm{\partial_x u(t,\cdot)}_{L^\infty} \diff t = \infty$ and $\int_{0}^{{\tau'}} \norm{\partial_x u(t,\cdot)}_{L^\infty} \diff t < \infty$, for any stopping time $\tau' < \tau_{max}.$
%\end{remark}
%\begin{remark}
%As a counterpart of the deterministic blow-up criterion  Theorem \ref{blowthe}, we note that Theorem \ref{blowup} shows that the stopping times $\tau^s$ (when the $H^s$-norm of the velocity explodes) and $\tau_\infty$ (when the time-integral of the supremum norm of the derivative of the velocity explodes) coincide almost surely.
%\end{remark}

\begin{proof}[Proof of Theorem \ref{blowup}] 
By following the argument in \cite{stochEuler2017}, we start by noting that it is clear that if $0\leq\tau<\tau_{max}$ is a smaller stopping time, necessarily $\int_0^{\tau}  \norm{\partial_x u(t, \cdot)}_{L^\infty} \diff t <\infty$ a.s. This is guaranteed by the embedding
\begin{align} \label{tipico}
\norm{\partial_x u}_{L^\infty}  \lesssim \norm{u}_{H^s}.
\end{align}
Proving $\int_0^{\tau_{max}}  \norm{\partial_x u(t, \cdot)}_{L^\infty} \diff t=\infty$ is more involved. For this, we consider the hyper-regularised truncated Burgers' equation (\ref{hyper-regularised-eq}) and define the following stopping times:
\begin{align*}
\tau^s = \lim_{n \rightarrow \infty} \tau_n^s, \qquad \tau_n^s = \inf \left\{t \geq 0: \norm{u_r^\nu(t, \cdot)}_{H^s} \geq n  \right\},
\end{align*}
\begin{align*}
\tau^{\infty} = \lim_{n \rightarrow \infty} \tau_n^{\infty}, \qquad \tau_n^{\infty} = \inf \left\{t \geq 0: \int_0^t  \norm{\partial_x u_r^\nu(s, \cdot)}_{L^\infty}   \diff s\geq n  \right\}.
\end{align*}
We claim that $\tau^\infty = \tau^s.$ By again making use of \eqref{tipico}, it is easy to check $\tau^s\leq \tau^\infty.$ For simplicity, we omit the subscripts $\nu$ and $r$ throughout the proof. 
%The evolution of the $L^{2}$ norm of $u$ is given by
%\begin{align*}
%\frac{1}{2} \diff ||u||^{2}_{L^{2}} &+ \theta(||\partial_{x}u||_{L^\infty})\langle u \partial_{x} u , u\rangle_{L^{2}} \diff t + \langle \mathcal{Q} u, u \rangle_{L^{2}} \diff W_{t} \\
%&= \nu\langle \partial_{xx}^{s'}u,u \rangle_{L^{2}} \diff t +\frac{1}{2} \langle \mathcal{Q}^2 u, u \rangle_{L^2} \diff t + \frac{1}{2} \langle \mathcal{Q} u, \mathcal{Q} u \rangle_{L^2} \diff t.
%\end{align*}
%Applying integration by parts using the same techniques already employed in the previous section (take into account \eqref{eq:cancellation1:thm} for estimating the last two terms), we arrive at
%\begin{align}\label{EstimateL2}
% \diff  ||u||^{2}_{L^{2}} + 2   \langle \mathcal{Q} u, u \rangle_{L^{2}}  \diff W_{t} \lesssim  ||u||^{2}_{L^{2}}  \diff t.
%\end{align} 
Imitating the techniques from the previous subsection, we arrive at
\begin{align*}
& \frac{1}{2} \diff  \norm{\Lambda^s u}^{2}_{L^{2}}  + \theta(\norm{\partial_{x}u}_{L^\infty}) \langle \Lambda^s (u \partial_{x}u) , \Lambda^s u \rangle_{L^{2}} \diff t +  \langle \Lambda^s \mathcal{Q} u, \Lambda^s u \rangle_{L^{2}} \diff W_{t} \\
& = \nu \langle \Lambda^{s+2s'} u, \Lambda^s u \rangle_{L^{2}} \diff t +\frac{1}{2}  \langle \Lambda^s \mathcal{Q}^2  u, \Lambda^s u \rangle_{L^2} \diff t + \frac{1}{2}  \langle \Lambda^s \mathcal{Q} u, \Lambda^s \mathcal{Q} u \rangle_{L^2} \diff t.
\end{align*}
By integration by parts and standard estimates (use \eqref{eq:cancellation2:thm} for estimating the last two terms), one gets
\begin{align*} 
\diff \norm{\Lambda^s u}^{2}_{L^{2}} + 2  \langle \Lambda^s \mathcal{Q} u, \Lambda^s u \rangle_{L^{2}} \diff W_{t} \lesssim ( 1 + \norm{\partial_{x}u}_{L^\infty} ) \norm{\Lambda^s u}^{2}_{L^{2}}  \diff t.
\end{align*}
By also deriving a similar estimate for $s=0$ (which follows in a simpler manner by application of the techniques in the previous subsection), we obtain
\begin{align} \label{blowestimada}
\diff  \norm{u}^{2}_{H^{s}}  + 2 (\langle \mathcal{Q} u,u \rangle_{L^2} + \langle \Lambda^s \mathcal{Q} u,\Lambda^s u \rangle_{L^2}  )  \diff W_{t}  \lesssim ( 1 + \norm{\partial_{x}u}_{L^\infty} )\norm{u}^{2}_{H^{s}} \diff t.
\end{align}
Finally, we treat the stochastic term. Without loss of generality, we assume $\norm{u}_{H^s} \geq \epsilon> 0$ (otherwise add a positive constant to this function) and by applying It\^{o}'s formula in $L^2$ \cite{rozovskii} to the logarithm, we obtain
\begin{align}\label{stochasticitoformula}
\diff  \text{log} ( \norm{u}^{2}_{H^s} )= \frac{\diff  \norm{u}^{2}_{H^s} }{\norm{u}^{2}_{H^s}}-\frac{\diff  N_{t}}{2 (\norm{u}^{2}_{H^s})^{2}},
\end{align}
where $N_t$ is defined by
\begin{align*}N_{t}= 4 \int_{0}^{t} ( \langle \mathcal{Q} u, u \rangle_{L^2} +  \langle \Lambda^s \mathcal{Q} u, \Lambda^s u  \rangle_{L^2}  )^2 \diff \gamma.
\end{align*}
Notice that we have the bound  
\begin{align} \label{surprising}
    |N_t| \lesssim \norm{u}_{H^s}^2,  
\end{align}
as we have already indicated before (see the proof of \eqref{eq:cancellation2:thm} in the appendix of \cite{stochBouss2018}), where the constant in the last inequality depends on the $W^{s+1,\infty}$-norm of the coefficients of $Q$. Plug \eqref{blowestimada} and \eqref{surprising} into \eqref{stochasticitoformula} to derive the estimate
\begin{align} \label{sinint} 
\diff \text{log} (\norm{u}^{2}_{H^s} ) \lesssim  \frac{(1+\norm{\partial_{x}u}_{L^\infty} ) \norm{u}^{2}_{H^s}}{\norm{u}^{2}_{H^s}} \diff t+ \diff M_{t}, \end{align}
where $M_t$ is the local martingale 
\begin{align*} M_t = \int_0^t \frac{ \langle \mathcal{Q} u, u  \rangle_{L^2} + \langle \Lambda^s \mathcal{Q} u, \Lambda^s u  \rangle_{L^2} }{\norm{u}^{2}_{H^s}} \diff W_{\gamma},
\end{align*}
for any $t>0.$ Expressing \eqref{sinint} in integral form, we have
\begin{align}    \label{eq:gronwallform}
\text{log} (\norm{u(t)}^{2}_{H^s} ) \lesssim  \text{log} (\norm{u(0)}^{2}_{H^s}) + \int_0^t (1+\norm{\partial_{x}u}_{L^\infty} ) \diff \gamma  + \int_{0}^{t} \diff M_{\gamma}.
\end{align}
By applying Burkholder-Davis-Gundy inequality (see \eqref{BGD:ineq}), we can control the local martingale term by estimating
\begin{align*}
& [ M ]_{t}  = \int_{0}^{t} \frac{(\langle \mathcal{Q} u, u \rangle_{L^2} + \langle \Lambda^s \mathcal{Q} u, \Lambda^s u \rangle_{L^2} )^{2}}{\norm{u}^{4}_{H^s} } \diff \gamma \lesssim t.
\end{align*}
Here, we have employed again the arguments in the appendix of \cite{stochBouss2018} to bound the numerator in the fraction above. Burkholder-Davis-Gundy then yields
\begin{align}\label{martingale:est:2}
\mathbb{E} \left[ \sup_{s\in[0,t]} \left|\int_{0}^{s} \diff M_{\gamma}  \right| \right] \lesssim \sqrt{t}. 
\end{align}
Taking expectation in \eqref{eq:gronwallform} and using estimate \eqref{martingale:est:2}, we establish that for any $n, m \in \mathbb{N},$
\begin{align*}
\mathbb{E}\left[\sup_{\gamma \in[0,\tau^{\infty}_{n}\wedge m ]} \text{log} (\norm{u(\gamma)}^{2}_{H^{s}} ) \right] \lesssim \text{log} (\norm{u_{0}}^{2}_{H^{s}} ) + m(1+n) + \sqrt{m} < \infty.
\end{align*}
Therefore 
\begin{align*}
\mathbb{E} \left[  \text{log} \left(\sup_{\gamma \in[0,\tau^{\infty}_{n}\wedge m ]} \norm{u(\gamma)}_{H^s}^2 \right) \right]< \infty,
\end{align*}
which in particular means that for $n,m \in \mathbb{N},$ $\sup_{\gamma \in[0,\tau^{\infty}_{n}\wedge m ]} \norm{u(\gamma)}_{H^s}$ is finite a.s. Hence
\begin{align} \label{ultima}
\mathbb{P}\left( \sup_{\gamma \in[0,\tau^{\infty}_{n}\wedge m ]} \norm{u(\gamma)}^{2}_{H^{s}} < \infty \right) =1, 
\end{align}
for every $n,m\in \mathbb{N}$, which from \cite{stochEuler2017} implies $\tau^{\infty}\leq \tau^{s}$. Therefore $\tau^\infty = \tau^s.$ Recall that we have omitted subscripts throughout this proof but $u = u^{\nu}_{r}$. By application of Fatou's Lemma we also obtain \eqref{ultima} in the limit $\nu \rightarrow 0,$ $r \rightarrow \infty,$ and we can show that $\tau^\infty=\tau^s$ also holds in the limit, concluding our argument. The few steps missing can be checked in \cite{stochEuler2017}.
\end{proof}

\subsection{Global well-posedness of a viscous stochastic Burgers' equation} 
The {\em viscous} stochastic Burgers' equation is given by
\begin{align} \label{viscosidad}
\diff u + u \partial_x u \diff t + \mathcal{Q} u \diff W_t = \frac{1}{2} Q^2 u \diff t + \nu \partial_{xx}u \diff t.
\end{align}
We assume $Q = \xi(x)\partial_x$, $s>2.$
\begin{remark}
Observe that from our assumption on $Q$, \eqref{viscosidad} is simply \eqref{eq0} with one noise term, but we wish to keep the $Q$-notation for convenience.
\end{remark}
%These assumptions are made so that the characteristic equation can be solved via Banach Fixed Point theorem. 
We establish the global well-posedness of strong solutions of \eqref{viscosidad}. More concretely, we prove the following theorem.

\begin{theorem}\label{mainvisc}
Let $u_{0} \in H^{s} (\mathbb{T})$ for some $s> 2.$ Then there exists a pathwise unique maximal $H^s$-global solution of the viscous stochastic Burgers' equation \eqref{viscosidad}.
\end{theorem}
The proof follows the same strategy as the local existence proof for the Burgers' equation without viscosity \eqref{main:eq:noise} that we provided in Subsection \ref{localsection}. The most important part is proving the following a priori estimate. In this estimate, we assume that $u$ is smooth enough, but the rigorous way to do this is to regularise the equation first as we did in Subsection \ref{localsection}. 
\begin{proposition} \label{maximum1}
Let $u_{0}\in H^{s}(\mathbb{T}),$ $T>0,$ and $u: \Xi \times [0,T] \times \mathbb{T} \rightarrow \mathbb R$ be a solution to equation \eqref{viscosidad} that we assume to be smooth enough. Then there exists a constant such that \footnote{The attentive reader might ask why the $H^s$-norm of the velocity in \eqref{maximum1eq} is taken to the power of two instead of four as in \eqref{tight1}, but the arguments we carry out together with the control we provide for the $L^\infty$-norm of $u$ immediately yield the bound for the fourth power.}
\begin{align}\label{maximum1eq}
\quad \mathbb{E} \left [ \sup_{t \in [0,T]} \norm{u (t)}^2_{H^{s}} \right ] \leq C_*(T).
\end{align} 
\end{proposition} 
Once the a priori estimate \eqref{maximum1eq} is established, one can repeat the arguments in Subsection \ref{localsection} to obtain Theorem \ref{mainvisc}. However, since this is repetitive and tedious, we do not explicitly carry out these arguments here. From now on, we focus on proving Proposition \ref{maximum1}.
%\begin{remark}
%In the local well-posedness section, the velocity in \eqref{maximum1eq} was powered to the fourth, but it is enough to do it for the square by taking into account Jensen's inequality.
%\end{remark}

\begin{proof} We start by computing the evolution of the $L^{2}$-norm of the solution $u$. First note that the viscosity term has the good sign since
\begin{align*} \nu  \langle \partial_{xx} u , u  \rangle_{L^2} = - \nu \norm{\partial_{x}u}^{2}_{L^2}. \end{align*} 
By applying the same techniques as in Subsection \ref{localsection} (take into account estimate (\ref{eq:cancellation1:thm})), we obtain
\begin{align*}
\diff \norm{u}_{L^2}^2 + 2  \langle \mathcal{Q} u, u \rangle_{L^2} \diff W_t + 2 \nu \norm{\partial_{x}u}^{2}_{L^2} \diff t \lesssim \norm{u}^{2}_{L^2} \diff t ,
\end{align*}
and we have the bounds
\begin{align} \label{para1}
& \mathbb{E} \left[ \sup_{t \in [0,T]} \norm{u(t,\cdot)}^{2}_{L^{2}} \right] \leq C_1(T), \\ \label{para2}  
& \mathbb{E} \left[ \int_{0}^{T} \norm{\partial_{x}u(s, \cdot)}^{2}_{L^{2}}  \diff s \right ] \leq C_2(T). 
\end{align}
We compute the evolution of the $\dot{H}^s$-norm of $u$
\begin{align} \label{general}
\frac{1}{2} \diff \norm{\Lambda^s u}^{2}_{L^{2}}  +  &  \langle \Lambda^s ( \mathcal{Q} u) , \Lambda^s u  \rangle_{L^2} \diff W_t \nonumber \\
& = - \langle \Lambda^s (u\partial_{x}u), \Lambda^s u \rangle_{L^2} \diff t + \nu  \langle \Lambda^s (\partial_{xx} u), \Lambda^s u  \rangle_{L^2} \diff t \nonumber \\  
&+ \frac{1}{2}  \langle \Lambda^s (\mathcal{Q} u), \Lambda^s (\mathcal{Q} u)  \rangle_{L^2} \diff t + \frac{1}{2}  \langle \Lambda^s (\mathcal{Q}^2 u), \Lambda^s u  \rangle_{L^2} \diff t. 
\end{align}
Integrating by parts the first term of the right-hand side above and observing that $u \partial_x u = (1/2)\partial_x(u^2)$, we obtain
\begin{align*}
&-  \langle \Lambda^s (u\partial_{x}u), \Lambda^s u  \rangle_{L^2}  = \frac{1}{2}  \langle \Lambda^{s-1} (\partial_{x}(u^2)), \Lambda^{s+1} u  \rangle_{L^2}.
\end{align*}
Hence
\begin{align}
& | \langle \Lambda^s (u\partial_{x}u), \Lambda^s u  \rangle_{L^2} | \leq \norm{\Lambda^{s-1} (\partial_{x}(u^2))}_{L^2} \norm{\Lambda^{s+1} u}_{L^{2}} \nonumber \\
& \lesssim (\norm{u}_{L^\infty} \norm{\Lambda^{s} u}_{L^2}  )\norm{\Lambda^{s+1} u}_{L^2}  \leq \frac{1}{2\nu} \norm{u}_{L^\infty}^{2}\norm{\Lambda^s u}^{2}_{L^2} + \dfrac{\nu}{2} \norm{\Lambda^{s+1} u}^{2}_{L^{2}}, \label{visco1}
\end{align}
where we have employed Kato-Ponce (see Lemma \ref{katoponce}) in the second inequality. The second term on the right-hand side after the equality of equation \eqref{general} can be integrated as
\begin{align} \label{visco2}
\nu \langle \Lambda^s (\partial_{xx} u), \Lambda^{s} u  \rangle_{L^2} = -\nu \langle \Lambda^{s+1} u, \Lambda^{s+1} u \rangle_{L^2} = -\nu \norm{\Lambda^{s+1} u}^{2}_{L^2}. 
\end{align} 
As usual, applying inequality (\ref{eq:cancellation2:thm}) we also have the estimate
\begin{align}  \label{visco3}
| \langle \Lambda^s (\mathcal{Q} u), \Lambda^s (\mathcal{Q} u) \rangle_{L^2} +  \langle \Lambda^s (\mathcal{Q}^2 u), \Lambda^s u  \rangle_{L^2}  | \lesssim \norm{\Lambda^s u}^2.
\end{align}
Putting together \eqref{visco1}-\eqref{visco3}, one derives
\begin{align*} & \diff \norm{\Lambda^s u}^{2}_{L^{2}} + 2 \langle \Lambda^s (\mathcal{Q} u),  \Lambda^s u \rangle_{L^2}  \diff W_t + \nu \norm{\Lambda^{s+1} u}^{2}_{L^2} \diff t \\
& \hspace{170pt} \lesssim \frac{1}{\nu} (1 + \norm{u}_{L^\infty}^{2} )\norm{\Lambda^s u}^{2}_{L^2} \diff t. 
\end{align*}
By applying expectation in the above equation and taking into account that the expectation of the It\^o integral vanishes due to the martingale property, one obtains
\begin{align} \label{h1viscoso}
\mathbb{E} \left[ \sup_{t \in [0,T]} \norm{\Lambda^s u(t)}^{2}_{L^2} \right]  \lesssim & \mathbb{E} \left[ \norm{\Lambda^s u(0)}^{2}_{L^2} \right] \nonumber \\ 
&+ \frac{1}{\nu} \mathbb{E} \left[ \int_{0}^{T} 
(1+\norm{u(\gamma)}_{L^\infty}^{2} )\norm{\Lambda^s u(\gamma)}^{2}_{L^{2}} \diff \gamma \right],
\end{align}
and
\begin{align}\label{h2viscoso}
\nu \mathbb{E} \left[ \int_{0}^{T} \norm{\Lambda^{s+1} u(\gamma)}^{2}_{L^{2}} \diff \gamma \right]  \lesssim & \mathbb{E} \left[ \norm{\Lambda^s u(0)}^{2}_{L^2} \right] \nonumber \\
&+ \frac{1}{\nu} \mathbb{E} \left[ \int_{0}^{T} 
(1+\norm{u(\gamma)}_{L^\infty}^{2})\norm{\Lambda^s u(\gamma)}^{2}_{L^{2}} \diff \gamma \right] .
\end{align}
Now we claim that the following maximum principle holds 
\begin{align} \label{maximuminfty}
    \norm{u(t,\cdot)}_{L^\infty} \leq \norm{u(0,\cdot)}_{L^\infty}, \quad t \in [0,T],
\end{align}
which we show in Lemma \ref{max-principle}. Once \eqref{maximuminfty} is proven, we can infer from (\ref{h1viscoso}) and (\ref{h2viscoso}) together with Gr\"onwall's inequality that there exist constants $A_1 = A_1(T),$ $A_2 = A_2(T),$ such that 
\begin{align*} \mathbb{E} \left[ \sup_{t \in [0,T] } \norm{\Lambda^s u(t, \cdot)}^{2}_{L^2} \right] \leq A_1, \quad \nu\mathbb{E} \left[ \int_{0}^{T} \norm{\Lambda^{s+1} u(s, \cdot)}^{2}_{L^{2}} \diff s \right] \leq A_2. \end{align*}
This concludes the proof, so we are only left to show the maximum principle \eqref{maximuminfty}.
\end{proof}
\begin{lemma} \label{max-principle}
The maximum principle \eqref{maximuminfty} is satisfied. 
\end{lemma}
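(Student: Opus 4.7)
My plan is to establish the maximum principle by combining the It\^o--Wentzell formula with a pointwise pathwise argument at the moving extremum, supported by the parabolic dissipation $\nu\partial_{xx}u$.

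The first step is to annihilate the transport part of the noise by evaluating $u$ along the stochastic characteristic $\diff X_t = a(X_t)\circ\diff W_t$. Applying Theorem~\ref{Ito-Wentzell} to the field $u$ solving \eqref{visc} and the semimartingale $X_t$, the Stratonovich term $a\,\partial_x u\circ\diff W_t$ in the equation for $u$ is exactly cancelled by the chain-rule contribution $\partial_x u\circ\diff X_t$, yielding the identity
\[
\diff u(t,X_t) \;=\; \bigl(\nu\partial_{xx}u - u\partial_x u\bigr)(t,X_t)\,\diff t \;-\; b(X_t)\,u(t,X_t)\circ\diff W_t .
\]
In this Lagrangian frame $u$ is driven only by the dissipative/advective drift and by a purely multiplicative Stratonovich noise of geometric-Brownian-motion type.

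Next, I would set $M(t):=\sup_x u(t,x)$ and $m(t):=\inf_x u(t,x)$; it suffices to bound both, treating them symmetrically. At a maximiser $x_*(t)$ one has $\partial_xu(t,x_*(t))=0$ and $\partial_{xx}u(t,x_*(t))\leq 0$, so the envelope/Danskin argument applied to the identity above gives the pathwise It\^o differential inequality
\[
\diff M(t) \;\leq\; \tfrac{1}{2}\bigl(ab' + b^{2}\bigr)(x_*(t))\,M(t)\,\diff t \;-\; b(x_*(t))\,M(t)\,\diff W_t,
\]
and comparison with the explicit linear geometric SDE yields
\[
M(t) \;\leq\; M(0)\,\exp\!\left(\int_0^t \tfrac{1}{2}(ab')(x_*(s))\,\diff s - \int_0^t b(x_*(s))\,\diff W_s\right).
\]
Since $a,a',b,b'$ are bounded, the first integral is $O(T)$ deterministically and the second is a continuous martingale with quadratic variation bounded by $\|b\|_{\infty}^{2}T$; by Novikov's criterion the whole right-hand side therefore has finite moments of every order uniformly in $t\in[0,T]$. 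An analogous argument at a minimiser controls $|m(t)|$. Together this delivers the claim $\|u_t\|_{\infty}\leq C(T)\|u_0\|_{\infty}$ almost surely, with $C(T)\in L^{p}(\Omega)$ for every $p\geq 1$---precisely the form of \eqref{maximuminfty} needed to close the Gr\"onwall step in Proposition~\ref{maximum1}, where $\|u\|_{\infty}$ appears only under an expectation.

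The main obstacle is making the envelope argument at the moving extremum $x_*(\cdot)$ fully rigorous, since $x_*$ need not be differentiable. I would handle this in one of two ways: (i) by a convex regularisation, applying It\^o's formula to $\bigl((u(t,x)^{+})^{2p}+\varepsilon\bigr)^{1/2}$, integrating over $\mathbb{T}$, and sending $\varepsilon\to 0$ and $p\to\infty$; or (ii) by a direct $L^{2p}$ energy estimate, which uses periodicity ($\int u^{2p}\partial_x u\,\diff x=0$), the dissipation $-2p(2p-1)\nu\int u^{2p-2}(\partial_x u)^{2}\diff x$, and the Stratonovich-to-It\^o cancellation between $p\int u^{2p-1}\mathcal{Q}^{2}u\,\diff x$ and $p(2p-1)\int u^{2p-2}(\mathcal{Q}u)^{2}\diff x$ (an $L^{2p}$-analogue of \eqref{eq:cancellation1:thm}), followed by a Moser--Alikakos iteration that absorbs the remaining $p$-dependence via the 1-D Sobolev embedding $H^{1}\hookrightarrow L^{\infty}$ and the parabolic term.
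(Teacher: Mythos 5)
Your opening move --- killing the transport part of the noise by evaluating $u$ along the flow of $\diff X_t = a(X_t)\circ \diff W_t$ via the It\^o--Wentzell formula \eqref{Ito-Wentzell-formula} --- is indeed one half of the paper's argument, and the resulting identity $\diff u(t,X_t) = (\nu\partial_{xx}u - u\partial_x u)(t,X_t)\,\diff t - b(X_t)u(t,X_t)\circ\diff W_t$ is correct (your It\^o correction should read $\tfrac12(b^2-ab')$ rather than $\tfrac12(ab'+b^2)$, which is harmless since $a,b$ are smooth and bounded). The genuine gap is in how you handle the remaining zeroth-order noise $-b(X_t)u\circ\diff W_t$. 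Your primary argument, It\^o calculus applied to $M(t)=\sup_x u(t,x)$ at a moving maximiser, is, as you yourself admit, only formal: $M$ is not known to be a semimartingale and there is no stochastic Danskin lemma to invoke, so everything rests on your fallbacks (i)--(ii). But the $L^{2p}$ route does not close as described when $b$ is non-constant: the cancellation analogous to \eqref{eq:cancellation1:thm} kills only the top-order terms $p(2p-1)\int a^2u^{2p-2}(\partial_x u)^2\diff x$, and leaves the zeroth-order It\^o correction $p(2p-1)\int b^2u^{2p}\diff x \sim p^2\|u\|_{L^{2p}}^{2p}$. This term is compensated only through the quadratic variation of the martingale $-2p\int b\,u^{2p}\diff x\,\diff W_t$ inside the stochastic exponential, and by Cauchy--Schwarz the defect is of size $2p^2\,\mathrm{Var}_{\rho_t}(b)$ with $\rho_t := u^{2p}\diff x/\|u\|_{L^{2p}}^{2p}$, which is not small in general; being zeroth order it cannot be absorbed by the dissipation either, so the Gr\"onwall factor $e^{Cp^2T}$ survives the $2p$-th root and the Moser--Alikakos limit $p\to\infty$ does not go through. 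Taking expectations cannot rescue this: $\mathbb E\,\|u_t\|_{L^{2p}}^{2p}$ genuinely grows like $e^{cp^2t}$, because the correct bound carries a random factor of order $e^{\|b\|_\infty|W_t|}$. Fallback (i) is the same estimate in disguise and has the same problem.

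The missing idea --- and the paper's first step --- is to remove the $b$-part of the noise \emph{before} doing anything else, via the pathwise change of unknown $v(t,x)=e^{b(x)W_t}u(t,x)$; composing the resulting equation with the flow $\psi_t$ of $\diff X = a(X)\circ\diff W$ then yields a \emph{deterministic} parabolic PDE with random coefficients, $\partial_t w + \tilde w\,\partial_{X_0}w = \tilde\nu\,\partial^2_{X_0}w$ with $\tilde\nu>0$, for which the classical pointwise maximum-principle argument (with $f=e^{-\alpha t}w$ and a contradiction at an interior maximum) applies directly --- no $L^p$ machinery and no regularity of the moving maximiser is needed. Undoing the two transformations costs exactly a random factor of the type $\sup_{t\le T}e^{-cW_t}$, so the paper's $C(T)$ is random just as yours would be, and your remark that an $L^p(\Xi)$-integrable $C(T)$ suffices for Proposition \ref{maximum1} is consistent with the paper. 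If you want to salvage the $L^{2p}$ plan, perform the $e^{b(x)W_t}$ substitution first; but then the nonlinearity becomes $u\,\partial_x v$, which is no longer in divergence form in $v$, so the identity $\int u^{2p}\partial_x u\,\diff x=0$ is lost and new terms must be controlled --- another reason the paper's pointwise route is the cleaner one.
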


\begin{proof}
Following the type of argument in \cite{Beck2018}, consider the SDE
\begin{align} \label{difeo}
\diff X_t = \xi(X_t)\circ \diff W_t,
\end{align}
and let $\psi_t(X_0)$ be the corresponding flow of \eqref{difeo} with initial condition $X_0 \in \mathbb{T}$, which is a semimartingale and a smooth diffeomorphism. By applying It\^o-Wentzell in Stratonovich form (see Theorem \ref{Ito-Wentzell}), we evaluate $u$ along $X_t = \psi_t(X_0)$, getting that
\begin{align}
u(t,X_t) &= u(0,X_0) - \int_0^t u(s,X_s) \partial_x u (s,X_s) \diff s + \nu \int_0^t \partial_{xx} u(s,X_s) \diff s \nonumber \\ 
& - \int_0^t  \xi(X_s)\partial_x u (s,X_s) \circ \diff W_s + \int^t_0 \partial_x u (s,X_s) \circ \diff X_s, \label{something}
\end{align}
a.s., where the term in the last line 
\begin{align*}
    \int_0^t  \xi(X_s)\partial_x u (s,X_s) \circ \diff W_s + \int^t_0 \partial_x u (s,X_s) \circ \diff X_s = 0.
\end{align*} 
%\begin{remark}
%The assumptions of Theorem \ref{Ito-Wentzell} are met because we are assuming $u$ to be smooth enough.
%\end{remark}
Consider the change of variables $w(t,X_0) = u(t, \psi_t(X_0))$ and by chain rule, obtain
\begin{align*}
    & \partial_x u (t,X_t) = \frac{\partial \psi_t^{-1}}{\partial x} (\psi_t(X_0)) \frac{\partial w}{\partial X_0}(t,X_0), \\
    &\partial_{xx} u (t,X_t) = \frac{\partial^2 \psi_t^{-1}}{\partial x^2} (\psi_t(X_0))\frac{\partial w}{\partial X_0}(t,X_0) + \left(\frac{\partial \psi_t^{-1}}{\partial x} (\psi_t(X_0))\right)^2 \frac{\partial^2 w}{\partial X_0^2}(t,X_0).
\end{align*}
Therefore, \eqref{something} is equivalent to the following PDE with random coefficients
\begin{align*}
    \frac{\partial w}{\partial t} + \Tilde{w} \frac{\partial w}{\partial X_0} = \Tilde{\nu} \frac{\partial^2 w}{\partial X_0^2}, 
\end{align*}
where
\begin{align*}
    &\Tilde{w} = \frac{\partial \psi_t^{-1}}{\partial x} (\psi_t(X_0)) u(t,\psi_t(X_0)) - \nu \frac{\partial^2 \psi_t^{-1}}{\partial x^2} (\psi_t(X_0)), \\
    &\Tilde{\nu} = \nu \left(\frac{\partial \psi_t^{-1}}{\partial x} (\psi_t(X_0))\right)^2.
\end{align*}
We claim that $w$ satisfies the maximum principle 
\begin{align} \label{ultimoprincipio}
\norm{w_t}_{L^\infty} \leq \norm{w_0}_{L^\infty}, \quad t >0.
\end{align}
To prove \eqref{ultimoprincipio}, we follow the strategy in \cite{antonio}. If $w_0=0,$ the claim is evident. Otherwise, for every $t\geq0,$ we define $X^{0}_{t} \in \mathbb{T}$ to be such that $\norm{w_t}_{L^\infty}=\lvert w(t,X^{0}_{t}) \rvert$ (i.e., the spatial point where $\norm{w_t}_{L^\infty}$ is attained. Its existence is guaranteed because $\mathbb{T}$ is compact). We start by claiming that for all $t>0,$ $\norm{w_t}_{L^\infty}$ is differentiable in time and
\begin{align} \label{claimclaim}
    \frac{\diff}{\diff t}\norm{w_t}_{L^\infty} = \frac{\partial w}{\partial t}(t,X^0_t).
\end{align}
First, we check that $\norm{w_t}_{L^\infty}$ is differentiable in time. For this, note that for $h \geq 0$ from the mean value theorem we have
\begin{align*}
& |\norm{w_{t+h}}_{L^\infty}-\norm{w_{t}}_{L^\infty}| \leq \norm{w_{t+h}-w_{t}}_{L^\infty} \leq \max_{s \in [t,t+h]}\lvert \partial_t w(s) \rvert h,
\end{align*}
and a similar argument can be applied if $h \leq 0.$ Therefore $\norm{w_t}_{L^\infty}$ is Lipschitz in time and from Rademacher's theorem a.e. differentiable.\\\\
%and let $h>0$, then we have that
%Therefore, $M(t)$ is Lipschitz and by Radamacher Theorem differentiable almost everywhere. Then, the derivative is given by $M'(t)=\partial_{t}w(X^{t}_{0},t)$ and hence, 
We proceed to prove equality \eqref{claimclaim}. %We observe that we can assume $\norm{w_t}_{L^\infty}>0,$ for all $t\geq 0,$ since it cannot happen $\norm{w_{t*}}_{L^\infty} = 0$ for some $t^*>0$ from uniqueness of solutions. 
Without loss of generality, we assume $w(t,X_t^0) \geq 0,$ $t>0.$ We note that by taking small enough $h>0$
\begin{align*}
    & \frac{\diff}{\diff t} \norm{w_t}_{L^\infty} = \lim_{h \rightarrow 0} \frac{\norm{w_{t+h}}_{L^\infty} - \norm{w_t}_{L^\infty}}{h} = \lim_{h \rightarrow 0} \frac{w(t+h,X^{0}_{t+h}) - w(t,X^{0}_t)}{h} \\
    & = \lim_{h \rightarrow 0} \frac{w(t+h,X^{0}_{t+h}) - w(t+h,X^{0}_t)}{h} + \lim_{h \rightarrow 0} \frac{w(t+h,X^{0}_t) - w(t,X^{0}_t)}{h} \\
    &\hspace{250pt} \geq  \frac{\partial w}{\partial t}(t,X^0_t),
\end{align*}
for $t>0.$ Symmetrically, by choosing small enough $h>0$ we also have
\begin{align*}
    & \frac{\diff}{\diff t} \norm{w_t}_{L^\infty} = \lim_{h \rightarrow 0} \frac{\norm{w_{t-h}}_{L^\infty} - \norm{w_t}_{L^\infty}}{-h} = \lim_{h \rightarrow 0} \frac{w(t-h,X^{0}_{t-h}) - w(t,X^{0}_t)}{-h} \\
    & = \lim_{h \rightarrow 0} \frac{w(t-h,X^{0}_{t-h}) - w(t-h,X^{0}_t)}{-h} + \lim_{h \rightarrow 0} \frac{w(t-h,X^{0}_t) - w(t,X^{0}_t)}{-h} \\
    &\hspace{250pt} \leq  \frac{\partial w}{\partial t}(t,X^0_t),
\end{align*}
for $t>0.$ Therefore we conclude \eqref{claimclaim}. To finish our argument, we observe that 
\begin{align*}
    \frac{\partial w}{\partial t}(t,X^{0}_{t})=-\tilde{w} \frac{\partial w}{\partial X_0}(t,X^{0}_{t})+ \tilde{\nu} \frac{\partial^2 w}{\partial X_0^{2}}(t,X^{0}_{t}) \leq 0.
\end{align*} 
Hence \eqref{ultimoprincipio} is satisfied. Since $\psi_t$ is a diffeomorphism, we have $\norm{u_t}_{L^\infty} = \norm{w_t}_{L^\infty}$ so the maximum principle also follows for $u$.

%Indeed, we perform the change of variables $f(t,X_0) = e^{-\alpha t}w(t,X_0)$ for any $\alpha > 0$, obtaining
%\begin{align} \label{f-eq}
    %\frac{\partial f}{\partial t} + \tilde{w}\frac{\partial f}{\partial X_0} = -\alpha f(t,X_0) + %\tilde{\nu}\frac{\partial^2 f}{\partial X_0^2}.
%\end{align}
%Finally, assume by contradiction that $f$ attains a maximum at $(t^*, X_0^*) \in (0,\infty) \times \mathbb T$. That would imply $\partial_t f(t^*, X_0^*) \geq 0$, $\partial_{X_0} f(t^*, X_0^*) = 0,$ and $\partial_{X_0}^2 f(t^*, X_0^*) \leq 0$. However, since $f(t,\cdot) > 0$ for all $t>0$, we have $-\alpha f(t,\cdot) < 0$. Also, since $\tilde{\nu} > 0$, the left-hand side of \eqref{f-eq} is nonnegative but the right-hand side is strictly negative, which is a contradiction. Taking $\alpha \to 0$, we obtain \eqref{ultimoprincipio}.
 
\end{proof}
With this, we conclude the proof of Theorem \ref{mainvisc}.

\section{Conclusion and Outlook}\label{section5}
In this paper, we studied the solution properties of a stochastic Burgers' equation on the torus and the real line, with the noise appearing in the transport velocity. We have shown that this stochastic Burgers' equation is locally well-posed in $H^s(\mathbb{T},\mathbb{R}),$ for $s>2,$ and furthermore, established a blow-up criterion which extends the deterministic one to the stochastic case. We also proved that if the noise is of the form $\xi(x)\partial_x u \circ dW_t$ where $\xi(x) = \alpha x + \beta$, then shocks form almost surely from a negative slope. Moreover, for a more general type of noise, we showed that blow-up occurs in expectation, which follows from the previously mentioned stochastic blow-up criterion. Also, in the weak formulation of the problem, we provided a Rankine-Hugoniot type condition that is satisfied by the shocks, analogous to the deterministic shocks. Finally, we also studied the stochastic Burgers' equation with a viscous term, which we proved to be globally well-posed in $H^s$ for $s>2$.

Let us conclude by proposing some future research directions and open problems that have emerged during the course of this work:
\begin{itemize}
    % \item One of the problems we could deal with in the future is considering the inviscid limit $\nu \rightarrow 0$ of the viscous Burgers' equation with transport noise. We have shown that a stochastic Rankine-Hugoniot condition holds in the weak formulation of the inviscid problem, which strongly suggests the existence and uniqueness of weak entropy solutions. It would be interesting to inspect whether the inviscid limit of the viscous equation converges to this unique entropy solution.
    % \\
    \item Regarding shock formation, it is natural to ask whether our results can be extended to show that shock formation occurs almost surely for more general types of noise.
    \\
    \item Another possible question is whether our global well-posedness result can be extended for the viscous Burgers' equation with the Laplacian replaced by a fractional Laplacian $(-\Delta)^{\alpha},$ $\alpha \in (0,1)$. The main difficulty here is that in the stochastic case, the proof of the maximum principle (Proposition \eqref{max-principle}) does not follow immediately since the pointwise chain rule for the fractional Laplacian is not available. In the deterministic case, this question has been settled and it is known that the solution exhibits a very different behaviour depending on the value of $\alpha$: for $\alpha\in[1/2,1]$, the solution is global in time, and for $\alpha\in[0,1/2)$, the solution develops singularities in finite time \cite{KisNazSht2008,Kiselev2010}. Interestingly, when an It\^o noise of type $\beta u \diff W_t$ is added, it is shown in \cite{rockner2014local} that the probability of solutions blowing up for small initial conditions tends to zero when $\beta > 0$ is sufficiently large.
    It would be interesting to investigate whether the transport noise considered in this paper can also have a similar regularising effect on the equation.
    \\
    \item Similar results could be derived for other one-dimensional equations with non-local transport velocity \cite{CorCorFont2005,DeGregorio1990,DeGregorio1996}. For instance, the so called \textit{CCF model} \cite{CorCorFont2005} is also known to develop singularities in finite time, although by a different mechanism to that of Burgers'. To our knowledge, investigating these types of equations with transport noise is new.
\end{itemize}

\subsection*{Acknowledgements}
{\small
The authors would like to thank Jos\'e Antonio Carillo de la Plata, Dan Crisan, Theodore Drivas, Franco Flandoli, Darryl Holm, James-Michael Leahy, Erwin Luesink, and Wei Pan for encouraging comments and discussions that helped put together this work. DAO has been partially supported by the grant MTM2017-83496-P from the Spanish Ministry of Economy and Competitiveness, and through the Severo Ochoa Programme for Centres of Excellence in R\&D (SEV-2015-0554). ABdL has been supported by the Mathematics of Planet Earth Centre of Doctoral Training (MPE CDT).
ST acknowledges the Schr\"odinger scholarship scheme for funding during this work.
}

%\nocite{*}
\bibliographystyle{alpha}
\bibliography{burger2018}

\end{document}